\numberwithin{equation}{section}
\numberwithin{figure}{section}
  \theoremstyle{plain}
  \newtheorem*{thm*}{\protect\theoremname}
\theoremstyle{plain}
\newtheorem{thm}{\protect\theoremname}
  \theoremstyle{plain}
  \newtheorem{lem}[thm]{\protect\lemmaname}
  \theoremstyle{remark}
  \newtheorem{rem}[thm]{\protect\remarkname}
  \providecommand{\lemmaname}{Lemma}
  \providecommand{\remarkname}{Remark}
  \providecommand{\theoremname}{Theorem}
\providecommand{\theoremname}{Theorem}
\begin{document}

\title{Continuity of the asymptotics of expected zeros of fewnomials}

\author{Timothy Tran}

\date{11/25/2013}
\begin{abstract}
In ``Random complex fewnomials, I,'' B. Shiffman and S. Zelditch
determine the limiting formula as $N\to\infty$ of the (normalized)
expected distribution of complex zeros of a system of $k$ random
$n$-nomials in $m$ variables $(k\leq m)$ where the coefficients
are taken from the $\mathrm{SU}(m+1)$ ensemble and the spectra are
chosen uniformly at random. We recall their result and show the limiting
formula is a $(k,k)$-form with continuous coefficients.
\end{abstract}
\maketitle
\tableofcontents{}

\section{Introduction \label{sec:Introduction}}

A. G. Kushnirenko conjectured in the late 1970s that the number of
real roots of a general system of $m$ polynomials in $m$ variables
can be bounded above by a function of the number of nonzero terms
which appear \cite{Kh}. Equivalently, Kushnirenko's conjecture states
that the complexity of a polynomial system can be measured by the
number of nonzero terms among the polynomials in the system instead
of the degrees of the polynomials in the system. As it turns out,
A. Khovanskii's work on fewnomial systems in the 1980s showed this
conjecture is true.

Besides answering Kushnirenko's conjecture, a theorem by Khovanskii
(\ref{eq:Khovanskii}) can be interpreted as obtaining a bound on
the deviation from the average number of zeros in an angular sector.
Obtaining a corollary (\ref{eq:Khovanskii2}) to this theorem, Khovanskii
finds the number of nondegenerate roots of a polynomial system $P=(P_{1}=\dotsb=P_{m}=0)$
lying in the positive orthant of $\mathbb{R}^{m}$ is at most 
\[
2^{n(n-1)/2}(m+1)^{n},
\]
where $n$ is the number of distinct monomials that appear with nonzero
coefficient in at least one of the polynomials \cite{Kh}. However,
this bound is not sharp, and even an improvement of the bound to 
\[
\frac{e^{2}+3}{4}2^{l(l-1)/2}m^{l},
\]
where $m+l+1$ is the number of monomials, by Bihan and Sottile \cite{BS2}
is only asymptotically sharp \cite{BRS}. Consequently, rather than
looking for a sharp bound on the number of zeros of a polynomial system
$P$, we wish to consider the typical behavior of zeros to complex
fewnomial systems and are motivated to study the distribution of zeros
probabilistically.

With this in mind, let us introduce some notation and state two theorems
by B. Shiffman and S. Zelditch which give the limit of the expected
distribution of complex zeros of a system of $k$ random $n$-nomials
in $m$ variables, where the first theorem considers a fixed choice
of exponents which are then dilated, while the second theorem considers
choosing among all possible choices of exponents uniformly.

We will begin by defining the $\mathrm{SU}(m+1)$ ensemble. Denote
the space of polynomials of degree at most $N$ by $\mathrm{Poly}(N)$
and place on it the inner product $h$ given by
\[
\left\langle f,\bar{g}\right\rangle =\frac{1}{m!}\int_{\mathbb{C}^{m}}\frac{f(z)\overline{g(z)}}{\left(1+\left\Vert z\right\Vert ^{2}\right)^{N}}\omega_{FS}^{m}(z),\quad\text{for }f,g\in\mathrm{Poly}(N),
\]
where $\omega_{FS}=\frac{i}{2\pi}\partial\bar{\partial}\log(1+\left\Vert z\right\Vert ^{2})$
is the Fubini-Study Kähler form on $\mathbb{C}^{m}\subset\mathbb{CP}^{m}$.
Then since $h$ is invariant under the torus action, the monomials
$\chi_{\alpha}$ determine an orthogonal basis for $\mathrm{Poly}(N)$
with respect to $h$ (true for any inner product invariant under the
torus action). Denoting the normalization of $\chi_{\alpha}$ with
respect to $h$ by $m_{\alpha}$, we obtain the expression of a polynomial
$p\in\mathrm{Poly}(N)$ as 
\[
p=\sum_{\alpha\in\mathbb{N}^{m},\left|\alpha\right|\leq N}c_{\alpha}m_{\alpha},
\]
where $\left|\alpha\right|=\alpha_{1}+\dots+\alpha_{m}.$ Now define
the induced Gaussian (probability) measure $\gamma_{N}$ on $\mathrm{Poly}(N)$
as the measure such that the coefficients $c_{\alpha}$ are independent
complex normal random variables. Having placed a measure on $\mathrm{Poly}(N)$,
we define the $\mathrm{SU}(m+1)$ ensemble as the spaces $\mathrm{Poly}(N)$
together with their corresponding measures $\gamma_{N}$.

Next, we consider subspaces of $\mathrm{Poly}(N)$ and their corresponding
conditional probability measures. Given $p\in\mathrm{Poly}(N)$, we
define the spectrum of $p$ to be the set $S_{p}=\{\alpha\in\mathbb{N}^{m}\mid c_{\alpha}\neq0\}$.
Then choose a set of exponents $S\subset\{\alpha\in\mathbb{N}^{m}\mid\left|\alpha\right|\leq N\}$
and consider the subspace $\mathrm{Poly}(S)$ of $\mathrm{Poly}(N)$
consisting of polynomials $p$ whose spectrum $S_{p}$ is contained
in $S$. Restricting $\gamma_{N}$ to this subspace, we denote the
conditional probability by $\gamma_{N\mid S}$. We can also consider
all possible sets $S$ for which the cardinality of the set $S$ is
some fixed $n\in\mathbb{N}$. In this situation, we shall consider
the measure $\gamma_{N\mid n}$ such that the restriction of $\gamma_{N\mid n}$
to $\mathrm{Poly}(S)$ is $\frac{1}{\left|C(N,n)\right|}\gamma_{N\mid S}$,
where $C(N,n)$ is collection of sets $S\subset\{\alpha\in\mathbb{N}^{m}\mid\left|\alpha\right|\leq N\}$
with cardinality $n$ and $\left|C(N,n)\right|$ is the cardinality
of $C(N,n)$. In regard to terminology, we say a polynomial $p$ is
an $n$-nomial if the cardinality of $S_{p}$ is at most $n$.

Next, let us fix a subspace $\mathcal{S}$ in $\mathrm{Poly}(N)$.
For polynomials $P_{1},\dots,P_{k}\in\mathcal{S}$ and $k\leq m$,
we consider the set of zeros $Z_{P_{1},\dots,P_{k}}=\{z\in\left(\mathbb{C}^{\ast}\right)^{m}\mid P_{1}(z)=\cdots=P_{k}(z)=0\}$
and the associated current of integration $[Z_{P_{1},\dots,P_{k}}]\in\mathcal{D}^{\prime k,k}((\mathbb{C}^{\ast})^{m})$
given by 
\[
([Z_{P_{1},\dots,P_{k}}],\psi)=\int_{Z_{p_{1}\cdots P_{k}}}\psi,\quad\text{for }\psi\in\mathcal{D}^{m-k,m-k}((\mathbb{C}^{\ast})^{m}).
\]
Then the expected zero current, if it exists, is a $(k,k)$-current
$\mathbb{E}_{\mathcal{S}}[Z_{P_{1},\dots,P_{k}}]$ such that
\[
(\mathbb{E}_{\mathcal{S}}[Z_{P_{1},\dots,P_{k}}],\psi)=\mathbb{E}_{\mathcal{S}}([Z_{P_{1},\dots,P_{k}}],\psi),\quad\psi\in\mathcal{D}^{m-k,m-k}((\mathbb{C}^{\ast})^{m}).
\]

Finally, we let $\Delta$ denote the unit simplex in $\mathbb{R}^{m}$,
write $\left|\lambda\right|=\lambda_{1}+\dotsb+\lambda_{m}$ for $\lambda\in\mathbb{R}^{m}$,
and write $z=[\exp(\frac{1}{2}\rho_{1}+i\theta_{1}),\dots,\exp(\frac{1}{2}\rho_{m}+i\theta_{m})]$
for $z\in(\mathbb{C}^{\ast})^{m}$. Having provided the fundamental
background and notation, we first look at the theorem for a fixed
spectrum $S$ and how the expected distribution of zeros behaves as
$S$ is dilated by a factor $N$.
\begin{thm*}[Shiffman and Zelditch, Theorem 1.2 \cite{SZ}]
 Let $S=\{\lambda^{1},\dots,\lambda^{n}\}$ be a fixed spectrum consisting
of $n$ lattice points in $p\Delta$. For random $m$-tuples $(P_{1}^{N},\dots,P_{m}^{N})$
of $n$-nomials in $\mathrm{Poly}(NS)$, with coefficients chosen
from the $\mathrm{SU}(m+1)$ ensembles of degree $Np$, the expected
distribution of zeros in $(\mathbb{C}^{\ast})^{m}$ has the asymptotics
\[
N^{-m}\mathbb{E}_{Np\mid NS}[Z_{P_{1}^{N},\dots,P_{m}^{N}}]\to p^{m}\left(\frac{i}{2\pi}\partial\bar{\partial}\max_{\lambda\in S}\left[\left\langle \rho,\lambda\right\rangle -\bigl\langle\hat{\lambda},\log\hat{\lambda}\vphantom{}^{p}\bigr\rangle\right]\right)^{k}.
\]
Here, $\hat{\lambda}\vphantom{}^{p}=(p-\left|\lambda\right|,\lambda_{1},\dots,\lambda_{m})$
and $\log\hat{\lambda}\vphantom{}^{p}=(\log(p-\left|\lambda\right|),\log\lambda_{1},\dots,\log\lambda_{m})$. 
\end{thm*}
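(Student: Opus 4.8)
The plan is to reduce the $m$-fold intersection to a statement about a single random $n$-nomial, and then to extract the limit from a Szeg\H{o}-kernel asymptotic together with Stirling's formula. Under $\gamma_{Np\mid NS}$ one has $P^{N}=\sum_{j=1}^{n}c_{N\lambda^{j}}\,m_{N\lambda^{j}}$ with the $c_{N\lambda^{j}}$ i.i.d.\ standard complex Gaussian, and the components $P_{1}^{N},\dots,P_{m}^{N}$ of the random $m$-tuple are independent copies of this field. Since, as will be checked below, the expected zero current of one such $n$-nomial has a \emph{smooth} local potential on $(\mathbb{C}^{\ast})^{m}$, the product formula for expected zero currents of independent Gaussian systems (Shiffman--Zelditch) applies and yields
\[
\mathbb{E}_{Np\mid NS}\bigl[Z_{P_{1}^{N},\dots,P_{m}^{N}}\bigr]=\bigl(\mathbb{E}_{Np\mid NS}[Z_{P^{N}}]\bigr)^{m},
\]
so that $N^{-m}\mathbb{E}_{Np\mid NS}[Z_{P_{1}^{N},\dots,P_{m}^{N}}]=\bigl(\tfrac1N\mathbb{E}_{Np\mid NS}[Z_{P^{N}}]\bigr)^{m}$, and it is enough to identify $\lim_{N\to\infty}\tfrac1N\mathbb{E}_{Np\mid NS}[Z_{P^{N}}]$ and then to pass this limit through the $m$-th exterior power.

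First I would compute the single-polynomial current. For fixed $z\in(\mathbb{C}^{\ast})^{m}$ the value $P^{N}(z)$ is a centered complex Gaussian of variance $\Pi_{NS}(z):=\sum_{j=1}^{n}|m_{N\lambda^{j}}(z)|^{2}$, so the Kac--Rice/Poincar\'e--Lelong identity for Gaussian holomorphic functions gives $\mathbb{E}[\log|P^{N}|^{2}]=\log\Pi_{NS}+\mathrm{const}$, hence
\[
\mathbb{E}_{Np\mid NS}[Z_{P^{N}}]=\frac{i}{2\pi}\,\partial\bar\partial\log\Pi_{NS}\qquad\text{on }(\mathbb{C}^{\ast})^{m}.
\]
No nontrivial monomial vanishes on $(\mathbb{C}^{\ast})^{m}$, so $\Pi_{NS}>0$ and $\log\Pi_{NS}$ is smooth there, which supplies the smoothness invoked above. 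Writing $z=[\exp(\tfrac12\rho_{1}+i\theta_{1}),\dots]$ gives $|m_{N\lambda}(z)|^{2}=\|\chi_{N\lambda}\|_{h}^{-2}\,e^{N\langle\rho,\lambda\rangle}$, and $\|\chi_{N\lambda}\|_{h}^{2}$ is evaluated by the standard Beta integral over $\mathbb{CP}^{m}$: up to a factor depending only on $N$ and $m$, it is the reciprocal of the multinomial coefficient with entries $N\hat\lambda^{p}$. Stirling's formula then yields, locally uniformly in $z$,
\[
\tfrac1N\log|m_{N\lambda}(z)|^{2}\;\longrightarrow\;\langle\rho,\lambda\rangle-\bigl\langle\hat\lambda^{p},\log\hat\lambda^{p}\bigr\rangle+C,
\]
with $C$ a constant independent of $\lambda$ and of $z$; and since $\Pi_{NS}$ is a sum of exactly $n$ such terms, the elementary bound $\max_{j}b_{j}\le\tfrac1N\log\sum_{j}e^{Nb_{j}}\le\max_{j}b_{j}+\tfrac{\log n}{N}$ promotes this to
\[
u_{N}:=\tfrac1N\log\Pi_{NS}\;\longrightarrow\;U+C,\qquad U(\rho):=\max_{\lambda\in S}\Bigl[\langle\rho,\lambda\rangle-\bigl\langle\hat\lambda^{p},\log\hat\lambda^{p}\bigr\rangle\Bigr],
\]
locally uniformly on $(\mathbb{C}^{\ast})^{m}$. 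Here $U$ is a ``tropical polynomial'': a maximum of finitely many pluriharmonic functions of $z$ plus constants, hence continuous, plurisubharmonic, and locally bounded, so $\bigl(\tfrac{i}{2\pi}\partial\bar\partial U\bigr)^{m}$ is meaningful in the Bedford--Taylor calculus.

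To conclude, I would pass to the limit. The potentials $u_{N}$ form a locally uniformly bounded family of plurisubharmonic functions converging locally uniformly --- hence in $L^{1}_{\mathrm{loc}}$ --- to the locally bounded plurisubharmonic function $U+C$, so the continuity of the complex Monge--Amp\`ere operator along such sequences gives
\[
N^{-m}\mathbb{E}_{Np\mid NS}\bigl[Z_{P_{1}^{N},\dots,P_{m}^{N}}\bigr]=\Bigl(\tfrac{i}{2\pi}\,\partial\bar\partial u_{N}\Bigr)^{m}\;\longrightarrow\;\Bigl(\tfrac{i}{2\pi}\,\partial\bar\partial U\Bigr)^{m}
\]
(the constant $C$ being killed by $\partial\bar\partial$). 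Tracking the normalizing constants --- those carried by $\|\chi_{N\lambda}\|_{h}$, by the subleading terms in Stirling's formula, and by the dilation $\lambda=p\mu$ of the spectrum (which is where the factor $p^{m}$ enters) --- then puts this limit into the form displayed in the theorem, with $k=m$ and the coefficient $\tfrac{i}{2\pi}$.

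The step I expect to be the main obstacle is this last interchange of $N\to\infty$ with $\partial\bar\partial(\,\cdot\,)^{m}$. The limiting potential $U$ is only continuous and piecewise pluriharmonic --- it is non-smooth precisely along the corner locus where two or more of the affine pieces $\langle\rho,\lambda\rangle-\langle\hat\lambda^{p},\log\hat\lambda^{p}\rangle$ tie, and it is there that the limiting current is concentrated --- so one cannot differentiate under the limit naively but must invoke pluripotential theory: the Bedford--Taylor theory of the Monge--Amp\`ere operator on locally bounded plurisubharmonic functions and, crucially, its continuity along locally uniformly (or monotonically) convergent sequences. The remaining ingredients --- the Kac--Rice formula, the product formula for independent systems, the Beta integral for $\|\chi_{N\lambda}\|_{h}$, Stirling's formula, and the log-sum-exp estimate --- are standard, and the final constant-chasing, while requiring care, is routine.
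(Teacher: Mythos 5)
You should note at the outset that the paper you are working from does not prove this statement at all: it is quoted (with small transcription slips --- the exponent $k$ should be $m$ for an $m$-tuple) from Shiffman and Zelditch \cite{SZ}, so there is no in-paper proof to compare against; the relevant comparison is with the proof in \cite{SZ} itself, and your outline is essentially that proof. The chain you propose --- probabilistic Poincar\'e--Lelong giving $\mathbb{E}_{Np\mid NS}[Z_{P^{N}}]=\frac{i}{2\pi}\partial\bar{\partial}\log\Pi_{NS}(z,z)$ for the conditional Szeg\H{o} kernel $\Pi_{NS}=\sum_{j}|m_{N\lambda^{j}}|^{2}$, the product property of expected zero currents for independent Gaussian systems, the Beta-integral evaluation of $\|\chi_{N\lambda}\|_{h}$ together with Stirling and the log-sum-exp bound to get locally uniform convergence of $\frac{1}{N}\log\Pi_{NS}$ to the piecewise linear potential, and finally Bedford--Taylor continuity of the Monge--Amp\`ere operator under locally uniform convergence of locally bounded plurisubharmonic functions --- is exactly the mechanism used there, and you correctly identify the last step as the one requiring pluripotential theory rather than naive differentiation. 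Two points deserve care: the product formula $\mathbb{E}[Z_{P_{1},\dots,P_{m}}]=(\mathbb{E}[Z_{P}])^{m}$ is itself a lemma proved by conditioning and is legitimate here only because $\log\Pi_{NS}$ is smooth on $(\mathbb{C}^{\ast})^{m}$, which you do check; and in the final constant-chasing note that the potential you derive, $\max_{\lambda\in S}\bigl[\langle\rho,\lambda\rangle-\langle\hat{\lambda}^{p},\log\hat{\lambda}^{p}\rangle\bigr]$, already carries the full degree $p$, so your limit is $\bigl(\frac{i}{2\pi}\partial\bar{\partial}\max_{\lambda\in S}[\cdots]\bigr)^{m}$ with no additional factor $p^{m}$; the prefactor $p^{m}$ in the displayed statement belongs with a spectrum rescaled to the unit simplex, and asserting that ``tracking constants'' produces both the unnormalized max and the factor $p^{m}$ simultaneously would be inconsistent (test it on the full spectrum $S=p\Delta\cap\mathbb{Z}^{m}$, where the answer must be $p^{m}\omega_{\mathrm{FS}}^{m}$). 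Aside from that bookkeeping point, the proposal is sound and matches the source's argument.
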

Looking at the potential function $\rho\mapsto\max_{\lambda\in S}[\left\langle \rho,\lambda\right\rangle -\bigl\langle\hat{\lambda},\log\hat{\lambda}\vphantom{}^{p}\bigr\rangle]$
of the limit, we note that it is a piecewise linear function, and
so the the expected limit distribution is a singular measure supported
along where the functions meet. For the case when $k=m$, the measure
is supported along the $0$-dimensional corner set. Keeping this remark
in the back of our mind, we look at the theorem where the spectra
are chosen uniformly at random.
\begin{thm*}[Shiffman and Zelditch, Theorem 1.5 \cite{SZ}]
 Let $1\leq k\leq m$, and let $(P_{1},\dots,P_{k})$ be a random
system of $n$-nomials of degree $N$, where the spectra $S_{j}$
are chosen uniformly at random from the simplex $N\Delta$ and the
coefficients are chosen from the $\text{SU}(m+1)$ ensemble. Then
the expected zero current in $(\mathbb{C}^{\ast})^{m}$ has the asymptotics
\[
N^{-k}\mathbb{E}_{N,n}[Z_{P_{1}^{N},\dots,P_{k}^{N}}]\to\left(\frac{i}{2\pi}\partial\bar{\partial}\int_{\Delta^{n}}\max_{j=1,\dots,n}\left[\bigl\langle\rho,\lambda^{j}\bigr\rangle-\bigl\langle\widehat{\lambda^{j}},\log\widehat{\lambda^{j}}\bigr\rangle\right]d\lambda^{1}\cdots d\lambda^{n}\right)^{k}.
\]
Here, $\hat{\lambda}=(1-\left|\lambda\right|,\lambda_{1},\dotsc,\lambda_{m})$,
\textup{$\log\hat{\lambda}=(\log(1-\left|\lambda\right|),\log\lambda_{1},\dots,\log\lambda_{m})$,
and} $d\lambda=m!d\lambda_{1}\dotsm d\lambda_{m}$.
\end{thm*}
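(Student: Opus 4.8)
The plan is to reduce the $k$-fold intersection to the case $k=1$ together with one pluripotential-theoretic passage to the limit, and then to identify the limiting potential by Gaussian integration and the large-$N$ asymptotics of the monomial norms $\|\chi_{\alpha}\|_{h}$ of the $\mathrm{SU}(m+1)$ ensemble. Write
\[
\Phi_{n}(\rho):=\int_{\Delta^{n}}\max_{j=1,\dots,n}\bigl[\langle\rho,\lambda^{j}\rangle-\langle\widehat{\lambda^{j}},\log\widehat{\lambda^{j}}\rangle\bigr]\,d\lambda^{1}\cdots d\lambda^{n}
\]
for the potential on the right-hand side; being a positive average of maxima of affine functions of $\rho$, it is convex in $\rho$, hence (via $\rho_{i}=\log|z_{i}|^{2}$) plurisubharmonic and finite on $(\mathbb{C}^{\ast})^{m}$. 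The first observation is that $P_{1},\dots,P_{k}$ are i.i.d., since the spectra $S_{1},\dots,S_{k}$ are drawn independently and so are the coefficients. Conditioning on $(S_{1},\dots,S_{k})$, for which the zero hypersurfaces $Z_{P_{j}}$ meet properly in $(\mathbb{C}^{\ast})^{m}$ almost surely (so that $[Z_{P_{1},\dots,P_{k}}]=\bigwedge_{j}\tfrac{i}{2\pi}\partial\bar\partial\log|P_{j}|^{2}$ is a legitimate Bedford--Taylor product), and using that each $\mathbb{E}_{N\mid S_{j}}[Z_{P_{j}}]$ is a smooth form, a Fubini argument gives $\mathbb{E}_{N\mid S_{1},\dots,S_{k}}[Z_{P_{1},\dots,P_{k}}]=\bigwedge_{j}\mathbb{E}_{N\mid S_{j}}[Z_{P_{j}}]$; averaging over the (i.i.d.) spectra then yields $\mathbb{E}_{N,n}[Z_{P_{1},\dots,P_{k}}]=(\mathbb{E}_{N,n}[Z_{P_{1}}])^{\wedge k}$. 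It is exactly the independence of the spectra for distinct $j$ that puts the average \emph{inside} $\partial\bar\partial$ and the power; a single common spectrum would instead produce $\int_{\Delta^{n}}(\tfrac{i}{2\pi}\partial\bar\partial\max_{j}[\cdots])^{k}$.

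So it suffices to treat $k=1$. Writing $P_{1}=\sum_{\alpha\in S}c_{\alpha}m_{\alpha}$ with $c_{\alpha}$ i.i.d.\ complex Gaussian, Poincar\'e--Lelong gives $Z_{P_{1}}=\tfrac{i}{2\pi}\partial\bar\partial\log|P_{1}|^{2}$, and the identity $\mathbb{E}\log|\langle c,v\rangle|^{2}=\log\|v\|^{2}+\mathrm{const}$ gives
\[
\mathbb{E}_{N,n}[Z_{P_{1}}]=\tfrac{i}{2\pi}\partial\bar\partial V_{N,n},\qquad V_{N,n}(z):=\mathbb{E}_{S}\Bigl[\log\sum_{\alpha\in S}|m_{\alpha}(z)|^{2}\Bigr],
\]
with each summand $\log\sum_{\alpha\in S}|m_{\alpha}|^{2}$ (indeed real-analytic on $(\mathbb{C}^{\ast})^{m}$), hence $V_{N,n}$, plurisubharmonic. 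Now $\|\chi_{\alpha}\|_{h}^{2}$ is a Beta-type integral, and Stirling gives $\tfrac{1}{N}\log\|\chi_{N\lambda}\|_{h}^{2}=\langle\hat\lambda,\log\hat\lambda\rangle+o(1)$ uniformly for $\lambda=\alpha/N\in\Delta$; since $|m_{\alpha}(z)|^{2}=e^{\langle\alpha,\rho\rangle}/\|\chi_{\alpha}\|_{h}^{2}$ with $\rho_{i}=\log|z_{i}|^{2}$, this yields $\tfrac{1}{N}\log|m_{N\lambda}(z)|^{2}\to\langle\rho,\lambda\rangle-\langle\hat\lambda,\log\hat\lambda\rangle$ locally uniformly. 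As $n$ is fixed, Laplace's principle replaces the sum by its largest term, $\tfrac{1}{N}\log\sum_{\alpha\in S}|m_{\alpha}(z)|^{2}\to\max_{j}[\langle\rho,\lambda^{j}\rangle-\langle\widehat{\lambda^{j}},\log\widehat{\lambda^{j}}\rangle]$ (with $S=\{\alpha^{1},\dots,\alpha^{n}\}$, $\lambda^{j}=\alpha^{j}/N$), again locally uniformly; and since the uniform law on $C(N,n)$ pushes forward under $S\mapsto(\alpha^{1}/N,\dots,\alpha^{n}/N)$ to a law converging weakly to $d\lambda^{1}\cdots d\lambda^{n}$ on $\Delta^{n}$, combining these two convergences gives $\tfrac{1}{N}V_{N,n}\to\Phi_{n}$ locally uniformly on $(\mathbb{C}^{\ast})^{m}$.

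Finally, the $\tfrac{1}{N}V_{N,n}$ are plurisubharmonic and converge locally uniformly to the plurisubharmonic $\Phi_{n}$, so by the Bedford--Taylor continuity theorem $(\tfrac{i}{2\pi}\partial\bar\partial\tfrac{1}{N}V_{N,n})^{\wedge k}\to(\tfrac{i}{2\pi}\partial\bar\partial\Phi_{n})^{\wedge k}$ weakly as currents. Combined with the reduction above, $N^{-k}\mathbb{E}_{N,n}[Z_{P_{1},\dots,P_{k}}]=(\tfrac{i}{2\pi}\partial\bar\partial\tfrac{1}{N}V_{N,n})^{\wedge k}\to(\tfrac{i}{2\pi}\partial\bar\partial\Phi_{n})^{\wedge k}$, which is the assertion.

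I expect the main obstacle to lie in the uniformity required in the second paragraph: the Stirling asymptotics for $\|\chi_{N\lambda}\|_{h}^{2}$ and the accompanying Laplace bound must be uniform enough to interchange $N\to\infty$ with both the spectral average $\mathbb{E}_{S}$ and the weak limit of the sampling measure, the delicate region being near $\partial\Delta$, where $\langle\hat\lambda,\log\hat\lambda\rangle$ degenerates; and the convergence $\tfrac{1}{N}V_{N,n}\to\Phi_{n}$ must be made \emph{locally uniform} rather than merely $L^{1}_{\mathrm{loc}}$, since $\omega\mapsto\omega^{\wedge k}$ is not weakly continuous on arbitrary plurisubharmonic potentials and it is the Bedford--Taylor theorem that rescues it here. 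A secondary, more routine point is the a.s.\ transversality of the zero divisors of independent random fewnomials, underlying the product formula of the first paragraph.
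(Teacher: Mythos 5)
This statement is quoted by the paper from Shiffman and Zelditch \cite{SZ} and is not proved here, so there is no in-paper argument to compare against; measured against the original source, your proposal follows essentially the same route (factorization of the expected simultaneous zero current over independent spectra and coefficients, Poincar\'e--Lelong plus the Gaussian identity $\mathbb{E}\log|\langle c,v\rangle|^{2}=\log\|v\|^{2}+\mathrm{const}$, Stirling asymptotics of the $\mathrm{SU}(m+1)$ monomial norms giving $\langle\rho,\lambda\rangle-\langle\hat{\lambda},\log\hat{\lambda}\rangle$, equidistribution of the uniformly drawn spectra, and Bedford--Taylor convergence for the $k$-fold product). The outline is sound, and the caveats you flag (uniformity near $\partial\Delta$, locally uniform rather than $L^{1}_{\mathrm{loc}}$ convergence of the potentials, and a.s.\ proper intersection) are exactly the points that need care; note also that locally uniform convergence comes for free here since the normalized potentials are convex in $\rho$.
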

The first observation we make is the appearance of the piecewise linear
function $(\rho,\lambda)\mapsto\max_{j=1,\dots,n}[\bigl\langle\rho,\lambda^{j}\bigr\rangle-\bigl\langle\widehat{\lambda^{j}},\log\widehat{\lambda^{j}}\bigr\rangle]$
in the integrand of the potential function of the limit. This makes
sense, at least intuitively, because choosing the spectra uniformly
at random should result in averaging over the potential functions
of the individual spectra. However, because the expected limit distribution
for the dilation of fixed spectra is singular, it is unclear whether
their average should be singular or nonsingular. Resolving this uncertainty,
we prove the following theorem:
\begin{thm}
\label{thm:maintheorem}Define the function
\begin{gather*}
F_{n}:\mathbb{R}^{m}\to\mathbb{R}\\
\rho\mapsto\int_{\Delta^{n}}\max_{j=1,\dots,n}\left[\bigl\langle\rho,\lambda^{j}\bigr\rangle-\bigl\langle\widehat{\lambda^{j}},\log\widehat{\lambda^{j}}\bigr\rangle\right]d\lambda^{1}\dotsm d\lambda^{n}.
\end{gather*}
Then $F_{n}$ is $\mathcal{C}^{2}(\mathbb{R}^{m})$ and thus the distribution
\begin{equation}
\left(\frac{i}{2\pi}\partial\bar{\partial}\int_{\Delta^{n}}\max_{j=1,\dots,n}\left[\bigl\langle\rho,\lambda^{j}\bigr\rangle-\bigl\langle\widehat{\lambda^{j}},\log\widehat{\lambda^{j}}\bigr\rangle\right]d\lambda^{1}\cdots d\lambda^{n}\right)^{k}\label{eq:distribution}
\end{equation}
is a $(k,k)$-form with continuous coefficients.

In particular, when $k=m$, the distribution in (\ref{eq:distribution})
is given by 
\[
m!\det\left(\frac{1}{2\pi}\frac{\partial^{2}}{\partial\rho_{i}\partial\rho_{j}}F_{n}\right)d\rho d\theta,
\]
which has a continuous density with respect to the Lebesgue measure
on $(\mathbb{C}^{\ast})^{m}$. 
\end{thm}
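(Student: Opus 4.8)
The plan is to show that $F_n\in\mathcal{C}^2(\mathbb{R}^m)$; the rest of the statement then follows formally, as I indicate at the end. Write $\psi(\rho,\mu)=\langle\rho,\mu\rangle-\langle\widehat\mu,\log\widehat\mu\rangle$ for $\mu$ in the closed simplex $\Delta$, so that $F_n(\rho)=\int_{\Delta^n}\max_{1\le j\le n}\psi(\rho,\lambda^j)\,d\lambda^1\cdots d\lambda^n$. Each $\psi(\rho,\cdot)$ is concave, so $F_n$ is a convex function of $\rho$, in particular locally Lipschitz; it therefore suffices to produce an explicit continuous formula for its (a.e.-defined) second partial derivatives. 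Since $\psi(\rho,\lambda^j)$ depends only on $\lambda^j$ and the integrand is symmetric in $\lambda^1,\dots,\lambda^n$, conditioning on the index that attains the maximum (a.s.\ unique) gives the order-statistic representation
\[
F_n(\rho)=n\int_\Delta\psi(\rho,\mu)\,G_\rho\!\bigl(\psi(\rho,\mu)\bigr)^{n-1}\,d\mu,\qquad G_\rho(t)=\bigl|\{\mu\in\Delta:\psi(\rho,\mu)\le t\}\bigr|,
\]
where $|\cdot|$ denotes normalized Lebesgue measure on $\Delta$ and $d\mu=m!\,d\mu_1\cdots d\mu_m$. The case $n=1$ is trivial since $F_1$ is affine, so assume $n\ge2$.

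The structural input is as follows. For each fixed $\rho$ the map $\mu\mapsto\langle\widehat\mu,\log\widehat\mu\rangle$ is smooth and strictly convex on the interior of $\Delta$, with positive-definite Hessian, so $\psi(\rho,\cdot)$ is smooth and strictly concave there, extends continuously to $\Delta$ (each term $t\log t\to0$ as $t\to0$), and attains its maximum at a unique interior point $\mu^{*}(\rho)$, characterized by $\nabla_\mu\psi(\rho,\mu^{*}(\rho))=0$. By the implicit function theorem both $\rho\mapsto\mu^{*}(\rho)$ and $b(\rho):=\psi(\rho,\mu^{*}(\rho))$ are $\mathcal{C}^\infty$. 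On $\partial\Delta$ some coordinate of $\widehat\mu$ vanishes, forcing $|\nabla_\mu\psi(\rho,\mu)|\to\infty$ as $\mu\to\partial\Delta$, while $|\nabla_\mu\psi|$ is bounded below on compact subsets of $\{(\rho,\mu):\mu\ne\mu^{*}(\rho)\}$ that stay away from $\partial\Delta$. The minimum $a(\rho):=\min_\mu\psi(\rho,\mu)=\min(0,\rho_1,\dots,\rho_m)$ is attained at vertices of $\Delta$ and is only piecewise linear in $\rho$. Consequently, for $a(\rho)<c<b(\rho)$ the level set $L_c(\rho)=\{\mu\in\Delta:\psi(\rho,\mu)=c\}$ is a compact smooth hypersurface-with-boundary that collapses to $\mu^{*}(\rho)$ as $c\uparrow b(\rho)$ and bubbles off near the minimizing vertices as $c\downarrow a(\rho)$.

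Next I would differentiate. By the envelope theorem, for a.e.\ $\lambda$ the function $\rho\mapsto\max_j\psi(\rho,\lambda^j)$ is differentiable with gradient $\lambda^{j^{*}(\rho,\lambda)}$ ($j^{*}$ the a.e.-unique maximizing index); the difference quotients being dominated by $\sup_{\mu\in\Delta}|\mu|$, dominated convergence and symmetry give
\[
\partial_{\rho_i}F_n(\rho)=\int_{\Delta^n}\lambda_i^{j^{*}}\,d\lambda=n\int_\Delta\mu_i\,H(\rho,\mu)^{n-1}\,d\mu,\qquad H(\rho,\mu):=\int_\Delta\mathbf 1\!\left[\psi(\rho,\mu')\le\psi(\rho,\mu)\right]d\mu'=G_\rho\!\bigl(\psi(\rho,\mu)\bigr).
\]
For $\mu\ne\mu^{*}(\rho)$ the number $\psi(\rho,\mu)$ is a regular value of $\psi(\rho,\cdot)$, so $H$ is $\mathcal{C}^1$ in $\rho$ near such $\mu$, and differentiating the sublevel-set volume by the coarea (transport) formula — using $\partial_{\rho_k}[\psi(\rho,\mu')-\psi(\rho,\mu)]=\mu'_k-\mu_k$ — expresses $\partial_{\rho_k}H(\rho,\mu)$ as a surface integral over $L_{\psi(\rho,\mu)}(\rho)$ with density $(\mu_k-\mu'_k)/|\nabla_{\mu'}\psi(\rho,\mu')|$. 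Differentiating $\partial_{\rho_i}F_n$ once more under the integral sign then yields
\[
\partial_{\rho_k}\partial_{\rho_i}F_n(\rho)=n(n-1)\int_\Delta\mu_i\,H(\rho,\mu)^{n-2}\left(\int_{L_{\psi(\rho,\mu)}(\rho)}\frac{\mu_k-\mu'_k}{|\nabla_{\mu'}\psi(\rho,\mu')|}\,d\sigma(\mu')\right)d\mu,
\]
and the content of the proof is to justify both differentiations under the integral and to show the right-hand side is continuous in $\rho$.

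The main obstacle — the technical heart of the argument — is to control the inner surface integral where the level-set geometry degenerates: as $\mu\to\mu^{*}(\rho)$, where $|\nabla_{\mu'}\psi|\to0$, and as $\mu\to\partial\Delta$, where $|\nabla_{\mu'}\psi|\to\infty$ and the level sets bubble off at the vertices. Near the peak, the Morse lemma for the nondegenerate maximum of $\psi(\rho,\cdot)$ gives $|\nabla_{\mu'}\psi|\asymp|\mu'-\mu^{*}(\rho)|$ and $\sigma\bigl(L_c(\rho)\bigr)\asymp|\mu-\mu^{*}(\rho)|^{m-1}$ on $L_c(\rho)$ for $c$ near $b(\rho)$, hence $|\partial_{\rho_k}H(\rho,\mu)|\lesssim|\mu-\mu^{*}(\rho)|^{m-1}$ and, since $H(\rho,\mu)\to1$ there, the integrand of $\partial_{\rho_k}\partial_{\rho_i}F_n$ is dominated near $\mu^{*}(\rho)$ by a function that is locally uniform in $\rho$ and integrable in $\mu$. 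Near a vertex $v$, $|\nabla_{\mu'}\psi|\to\infty$ while $\psi(\rho,\mu)-\langle\rho,v\rangle$ is, to leading order, a sum of terms $t\log\tfrac1t$ in the coordinates of $\widehat\mu$ that vanish at $v$, so the bubble around $v$ at level $c$ has surface measure $\lesssim\bigl((c-\langle\rho,v\rangle)/\log\bigr)^{m-1}$ and $H(\rho,\mu)^{n-2}$ (resp.\ $1-H(\rho,\mu)$) vanishes at the matching rate; a direct estimate then shows this contribution is integrable and bounded, and — because these vertex asymptotics have the \emph{same} universal leading coefficient at every vertex, and a bubble's contribution vanishes continuously as the bubble shrinks to the vertex — the whole integrand depends continuously on $\rho$, even across the locus where $a(\rho)=\min(0,\rho_1,\dots,\rho_m)$ fails to be smooth. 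With these bounds, differentiation under the integral and the dominated convergence theorem give $\partial_{\rho_k}\partial_{\rho_i}F_n\in\mathcal{C}^0(\mathbb{R}^m)$, i.e.\ $F_n\in\mathcal{C}^2(\mathbb{R}^m)$. Finally, writing $\rho_j=\log|z_j|^2$ one has $\partial\rho_j=dz_j/z_j$, $\bar\partial\rho_j=d\bar z_j/\bar z_j$ and $\partial\bar\partial\rho_j=0$, so $\tfrac{i}{2\pi}\partial\bar\partial F_n=\sum_{i,j}\tfrac{\partial^2F_n}{\partial\rho_i\partial\rho_j}\,\tfrac{i}{2\pi}\tfrac{dz_i}{z_i}\wedge\tfrac{d\bar z_j}{\bar z_j}$ has continuous coefficients; its $k$-th exterior power is a $(k,k)$-form whose coefficients are universal polynomials in the $\partial^2F_n/\partial\rho_i\partial\rho_j$, hence continuous; and for $k=m$, the standard identity for the top power of a $(1,1)$-form together with $\tfrac{i}{2\pi}\tfrac{dz_j}{z_j}\wedge\tfrac{d\bar z_j}{\bar z_j}=\tfrac{1}{2\pi}\,d\rho_j\wedge d\theta_j$ gives exactly $m!\det\!\bigl(\tfrac{1}{2\pi}\partial^2F_n/\partial\rho_i\partial\rho_j\bigr)\,d\rho\,d\theta$. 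I expect the vertex-bubble and peak estimates to be the only laborious part; conceptually everything reduces to the fact that averaging the piecewise-linear integrand over $\Delta^n$ regularizes it, which the level-set picture makes quantitative.
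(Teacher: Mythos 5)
Your plan is workable and it is a genuinely different route from the paper's, so let me compare. The paper never differentiates the order-statistic representation directly: it rewrites $F_{n}(\rho)=\log(1+\left|e^{\rho}\right|)-\int_{0}^{\infty}[1-D(t,\rho)]^{n}dt$ (expected minimum of $b=\log(1+\left|e^{\rho}\right|)-\psi$), parametrizes the sublevel sets of $b(\cdot,\rho)$ radially from the interior critical point $\mu(\rho)$ via the implicit inverse $h(t,\rho,x)$ of $B(s,\rho,x)=b((1-s)\mu+sx,\rho)$, computes explicit boundary limits of $B_{s},B_{ss},B_{\rho_{p}},B_{\rho_{p}\rho_{q}},h,h_{t},h_{\rho_{p}},h\cdot h_{t}$, and then differentiates twice under the integral after the change of variables $t=B(s,\rho,x)$ that fixes the domain; the one delicate point is $m=1$, where $D_{t}$ blows up at $t=0$ and the cancellation lemma (Lemma \ref{lem:stilde}) is needed. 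You instead work with $\psi$ itself, get $\partial_{\rho_{i}}F_{n}=n\int_{\Delta}\mu_{i}H^{n-1}d\mu$ by the envelope theorem, and express $\partial_{\rho_{k}}H$ as a level-set (coarea) integral with numerator $\mu_{k}-\mu'_{k}$; that numerator packages the paper's $m=1$ cancellation automatically, which is a real conceptual gain, and your two degeneration regimes (peak, and vertices/boundary of $\Delta$) correspond exactly to the paper's limits $t\to0$ and $t\to b(x,\rho)$. Your closing formal step (vanishing of $\partial\bar{\partial}\rho_{j}$, continuity of the coefficients of the $k$-th power, and the $k=m$ identity giving $m!\det(\tfrac{1}{2\pi}\partial^{2}F_{n}/\partial\rho_{i}\partial\rho_{j})\,d\rho\,d\theta$) is also fine and matches what the paper leaves implicit. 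What you leave asserted, however, is precisely where the paper spends almost all of its effort: the validity and continuity of the level-set differentiation when $L_{c}(\rho)$ meets $\partial\Delta$ (note the contact is tangential, since $|\nabla_{\mu'}\psi|\sim\log(1/\mu'_{i})$ blows up normally to a facet), when bubbles are born at non-minimizing vertices (topology change of $L_{c}$ as $c$ crosses a vertex value), the uniform domination needed for the two differentiations under the integral sign, and, for $m=1$, the fact that your ``surface integral'' is a two-point sum whose boundedness near the peak is exactly the content of Lemma \ref{lem:stilde}. These deferred estimates are true — the paper's boundary analysis of $B$ and $h$ in Sections \ref{sec:Notation-and-Properties} and \ref{sec:Proof-of-the-Theorem} proves the analogous facts — so I regard your proposal as a correct alternative outline rather than a flawed one; just be aware that the ``direct estimates'' you postpone are not routine and constitute the actual technical heart of the result.
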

Having stated the goal of our paper, let us return to the beginning
of the introduction and readdress Kushnirenko's conjecture. When $m=1$,
we can answer the conjecture using Descartes' rule of signs. Ordering
the terms of a polynomial by increasing exponent, the rule states
that the number of positive roots is equal to the number of sign changes
between consecutive nonzero coefficients or less than it by a multiple
of two. Thus, a univariate polynomial with $n$ nonzero terms has
at most $n-1$ positive roots, regardless of the polynomial's degree.
For $m>1$, we consider the real polynomial system $P=(P_{1}=\cdots=P_{m}=0)$
on $(\mathbb{C}^{\ast})^{m}$ and let $\Delta_{j}$ denote the Newton
polytope of $P_{j}$, that is, the convex hull of the exponents appearing
nontrivially in $P_{j}$. Then let $V(\Delta_{1},\dotsc,\Delta_{m})$
denote the mixed volume of $\Delta_{1},\dotsc,\Delta_{m}$. By the
Bernstein-Kushnirenko Theorem, the total number of solutions to a
general system $P$ is $m!V(\Delta_{1},\dotsc,\Delta_{m})$. Next
consider the real $m$-torus $\mathbb{T}^{m}=\left(\mathbb{R}/[0,2\pi]\right)^{m}$.
Then $(\mathbb{C}^{\ast})^{m}$ can be identified with $\mathbb{R}^{m}\times\mathbb{T}^{m}$
under the map $z=\left[\exp(\frac{1}{2}\rho_{1}+i\theta_{1}),\dots,\exp(\frac{1}{2}\rho_{m}+i\theta_{m})\right]\mapsto(\rho,\theta)$
and we call $\theta$ the argument of $z$. Fixing $U\subset\mathbb{T}^{m}$,
let $N(P,U)$ be the number of zeros of $P$ with arguments lying
in $U$ and let $S(P,U)=m!V(\Delta_{1},\dotsc,\Delta_{m})\mathrm{Vol}(U)/\mathrm{Vol}(\mathbb{T}^{m})$
be the average number of zeros of $P$ in $U$. Let $\Delta^{\ast}\subset\mathbb{R}^{m}$
denote the intersection of the sets $\{y\in\mathbb{R}^{m}\mid\left\langle \alpha,y\right\rangle <\pi\}$
as $\alpha$ ranges over the exponents appearing in the spectra $\Delta_{i}$.
Then let $\overline{\Delta}^{\ast}$ denote the image of the region
$\Delta^{\ast}$ under the quotient homomorphism $\phi:\mathbb{R}^{m}\to\mathbb{T}^{m}$
and $\Pi(\overline{\Delta}^{\ast},U)$ be the minimal number of translates
of $\overline{\Delta}^{\ast}$ needed to cover the boundary of $U$
. Khovanskii's result on real fewnomial systems says the following:
\begin{thm*}[Khovanskii, Theorem 2 \cite{Kh}]
 There exists an explicit function $\varphi(n,m)$ of two natural
number variables $(n,m)$ such that for each system of polynomial
equations system $P_{1}=\dotsb=P_{m}=0$ with Newton polyhedra $\Delta_{1},\dotsc,\Delta_{m}$
that contains $\leq n$ monomials, nonsingular at infinity in the
region $\arg z\in U$ of the space $(\mathbb{C}^{\ast})^{m}$, the
following relation holds:
\begin{equation}
\left|N(P,U)-S(P,U)\right|<\varphi(n,m)\Pi(\overline{\Delta}^{\ast},U).\label{eq:Khovanskii}
\end{equation}

\end{thm*}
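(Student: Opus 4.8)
The plan is to follow Khovanskii's Rolle-type method: pass to the universal cover so that the argument becomes a genuine real variable, reformulate $P$ as a system of trigonometric-exponential fewnomials of bounded ``format,'' bound the number of its zeros on one fundamental cell by induction on the number $n$ of monomials, and then globalize. Concretely, identify $(\mathbb{C}^{\ast})^{m}$ with $\mathbb{R}^{m}\times\mathbb{T}^{m}$ as in the introduction and lift to $\mathbb{R}^{m}\times\mathbb{R}^{m}$ with coordinates $(\rho,\phi)$; a term $c_{\alpha}z^{\alpha}$ of $P_{j}$ becomes $c_{\alpha}e^{\frac{1}{2}\langle\alpha,\rho\rangle}e^{i\langle\alpha,\phi\rangle}$, so the real and imaginary parts of each lifted $P_{j}$ are $\mathbb{R}$-linear combinations of the $2n$ functions $e^{\frac{1}{2}\langle\alpha,\rho\rangle}\cos\langle\alpha,\phi\rangle$ and $e^{\frac{1}{2}\langle\alpha,\rho\rangle}\sin\langle\alpha,\phi\rangle$, with $\alpha$ ranging over the $\leq n$ exponents of the system. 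The first step is to check that on any translate of the region $\Delta^{\ast}=\bigcap_{\alpha}\{\langle\alpha,y\rangle<\pi\}$ these functions form a Pfaffian system whose format (chain length, degrees) depends only on $(n,m)$: the Pfaffian chain consists of the linear forms $\langle\alpha,\rho\rangle$ and $\langle\alpha,\phi\rangle$ together with $e^{\frac{1}{2}\langle\alpha,\rho\rangle}$ and $\cos\langle\alpha,\phi\rangle,\sin\langle\alpha,\phi\rangle$, the trigonometric entries being admissible precisely because on such a translate every argument $\langle\alpha,\phi\rangle$ varies over an interval of length $<\pi$ --- which is exactly what the definition of $\Delta^{\ast}$ is designed to ensure.

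Next I would bound $N(P,\cdot)$ on a single such cell via the multidimensional Rolle-Khovanskii lemma. Using that $P$ is nonsingular at infinity over $U$, the variety $\Gamma=V(P_{1})\cap\dots\cap V(P_{m-1})$ is a smooth real curve, and the zeros of $P$ over the cell are the isolated points of $\Gamma$ at which $P_{m}$ vanishes; their number over an arc of $\Gamma$ is governed by the variation of $\arg P_{m}$ along that arc, hence by the number of zeros of a further $2n$-term trigonometric fewnomial on $\Gamma$. The Rolle-Khovanskii lemma replaces such a zero count on an arc of $\Gamma$ by a zero count of an associated Wronskian-type determinant --- which, after the appropriate logarithmic normalization, involves one fewer monomial --- plus one endpoint contribution per arc. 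Iterating this reduction at most $n$ times brings the problem to an explicitly solvable base case (a system assembled from $m+1$ monomials, estimated by a Vandermonde/Descartes argument), and carefully accumulating the per-arc endpoint contributions over the $\leq n$ reduction steps produces an explicit bound $\varphi(n,m)$, with the $2^{n(n-1)/2}$-type factors entering exactly here. Making this reduction genuinely decrease the monomial count at every stage while keeping $\varphi$ explicit and uniform in $P$, $U$ and the chosen cell is the step I expect to be the main obstacle.

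Finally I would globalize by a Stokes-type discrepancy argument. By the Bernstein-Kushnirenko theorem the total number of zeros of $P$ in all of $(\mathbb{C}^{\ast})^{m}$ is the constant $m!\,V(\Delta_{1},\dots,\Delta_{m})$, realized as the integral over $\Gamma$ (or a related cycle) of a closed ``argument'' $1$-form built from $P_{m}$; by the torus symmetry of the ambient space, localizing this integral to the region $\arg z\in U$ yields $S(P,U)=m!\,V(\Delta_{1},\dots,\Delta_{m})\,\mathrm{Vol}(U)/\mathrm{Vol}(\mathbb{T}^{m})$ together with a defect which, by Stokes, is a boundary integral over the part of $\Gamma$ lying over $\partial U$. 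Covering $\partial U$ by $\Pi(\overline{\Delta}^{\ast},U)$ translates of $\overline{\Delta}^{\ast}$ and estimating this boundary integral on each translate by the per-cell bound of the previous paragraph --- which limits the total variation of the relevant argument to at most $\varphi(n,m)$ per cell --- gives $|N(P,U)-S(P,U)|<\varphi(n,m)\,\Pi(\overline{\Delta}^{\ast},U)$. Apart from the Rolle-Khovanskii induction, the one point requiring care here is again the nonsingularity-at-infinity hypothesis: it guarantees that $\Gamma$, and the finitely many boundary terms generated along the way, remain finite and meet $\partial U$ transversally, so that the defect really is a finite boundary integral and no zero is miscounted near $\partial U$.
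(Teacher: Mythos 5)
The paper does not prove this statement at all: it is Khovanskii's Theorem~2, quoted from \cite{Kh} purely as background for the discussion of Kushnirenko's conjecture, and the only use made of it here is the derivation of the corollary (\ref{eq:Khovanskii2}) by letting $U$ shrink to a point. So there is no in-paper argument to compare yours against; the relevant benchmark is Khovanskii's own proof, and your outline does track its general lines --- lifting to the universal cover, treating the real and imaginary parts as exponential--trigonometric fewnomials that form a Pfaffian system on translates of $\Delta^{\ast}$ (which is indeed what the condition $\langle\alpha,y\rangle<\pi$ is for), bounding zeros cell by cell via the Rolle--Khovanskii reduction, and charging the deviation of $N(P,U)$ from $S(P,U)$ to cells covering $\partial U$.

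As a proof, however, what you have written is a program with the two load-bearing steps missing. First, the inductive Rolle--Khovanskii estimate producing an explicit bound $\varphi(n,m)$ that is uniform in $P$, in $U$, and in the chosen cell is precisely where all the substance of the theorem lies, and you explicitly concede you do not know how to make the reduction strictly decrease the monomial count while keeping the bound explicit; without that, no $\varphi(n,m)$ exists on your account. Second, the globalization is asserted rather than proved: identifying $S(P,U)=m!\,V(\Delta_{1},\dotsc,\Delta_{m})\mathrm{Vol}(U)/\mathrm{Vol}(\mathbb{T}^{m})$ as the exact average term and showing that the defect is a boundary contribution bounded, per translate of $\overline{\Delta}^{\ast}$ covering $\partial U$, by the same per-cell constant requires an averaging argument over torus translates of $U$ (a mean-value/Fubini step) together with a justification --- using nonsingularity at infinity --- that the relevant curve meets the boundary cells in a controlled way; invoking ``a Stokes-type discrepancy argument'' does not yet yield the inequality with the covering number $\Pi(\overline{\Delta}^{\ast},U)$ on the right-hand side. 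So the proposal is a reasonable high-level reconstruction of Khovanskii's strategy, but both of its essential estimates remain unestablished.
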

Thus, taking a sequence of balls $U_{j}\subset\mathbb{T}^{m}$ shrinking
to $\{0\}\subset\mathbb{T}^{m}$, then $\Pi(\overline{\Delta}^{\ast},U_{j})\to1$
and $S(P,U_{j})\to0$. It follows that (\ref{eq:Khovanskii}) gives
\begin{equation}
\left|N_{\mathbb{R}}(P)\right|\leq\varphi(n,m),\label{eq:Khovanskii2}
\end{equation}
an upper bound for the number of real zeros of a polynomial system
$P$ which depends only on the number of nonzero terms $n$ and the
number of variables $m$.

While Khovanskii's result is concerned with the angular distribution
of zeros, the zero distribution of random complex fewnomials is concerned
with the radial distribution of zeros. For example, when $m=1$, the
limit of the normalized expected distribution of zeros of random $n$-nomials
is $\frac{1}{2\pi}\frac{\partial^{2}}{\partial\rho^{2}}F_{n}d\rho d\theta$.
But since $F_{n}$ only depends on the radial component $\rho$, we
consider the associated radial distribution with density function
$\frac{\partial^{2}}{\partial\rho^{2}}F_{n}$ with respect to the
measure $d\rho$ on $\mathbb{R}$. On a related note, we mention the
Fubini-Study metric $\omega_{\mathrm{FS}}=\frac{\sqrt{-1}}{2\pi}\partial\bar{\partial}\log\left(1+\left\Vert z\right\Vert ^{2}\right)$
has the associated radial distribution $[(1+e^{\rho})(1+e^{-\rho})]^{-1}d\rho$.

\begin{figure}
\includegraphics[width=0.85\textwidth]{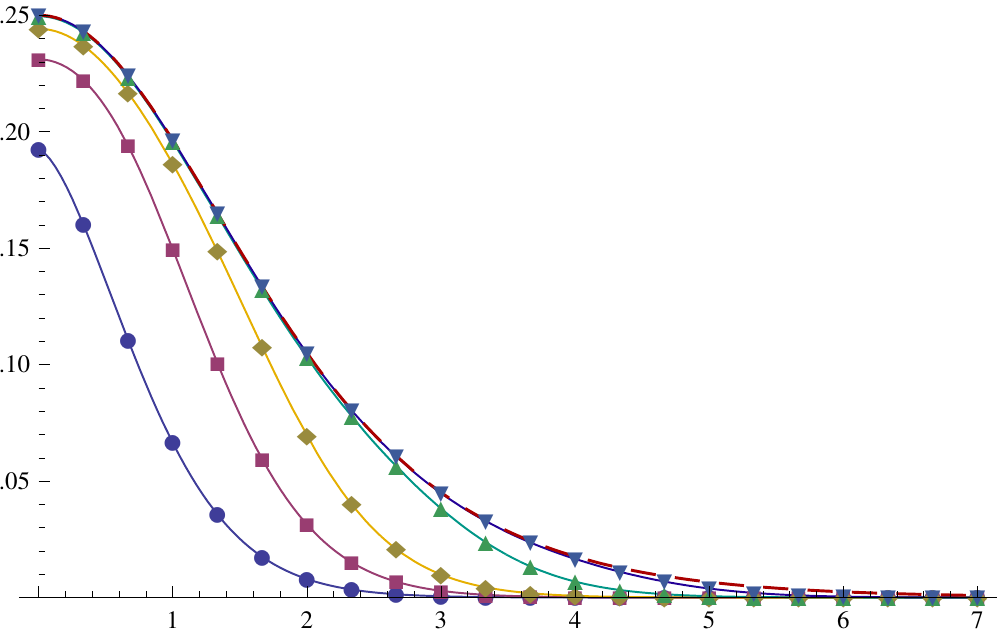}\caption{\label{fig:BeforeNormalization}Density functions $\frac{\partial^{2}}{\partial\rho^{2}}F_{n}$
for $n=2$ (\protect\includegraphics[bb=0bp 0bp 9bp 12bp]{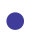}),
$n=4$ (\protect\includegraphics{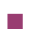}), $n=8$ (\protect\includegraphics{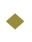}),
$n=32$ (\protect\includegraphics{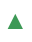}), and $n=128$
(\protect\includegraphics{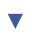}). Also the density of
$\omega_{\mathrm{FS}}$ (\protect\includegraphics[bb=0bp 18bp 23bp 27bp,clip]{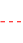}).}
\end{figure}

Plotting the density function $[(1+e^{\rho})(1+e^{-\rho})]^{-1}$
and the density function $\frac{\partial^{2}}{\partial\rho^{2}}F_{n}$
for various values of $n$ (Figure \ref{fig:BeforeNormalization})
suggests that perhaps $\frac{\partial^{2}}{\partial\rho^{2}}F_{n}$
is an increasing sequence of functions with the pointwise limit $[(1+e^{\rho})(1+e^{-\rho})]^{-1}$.

\begin{figure}
\includegraphics[width=0.85\textwidth]{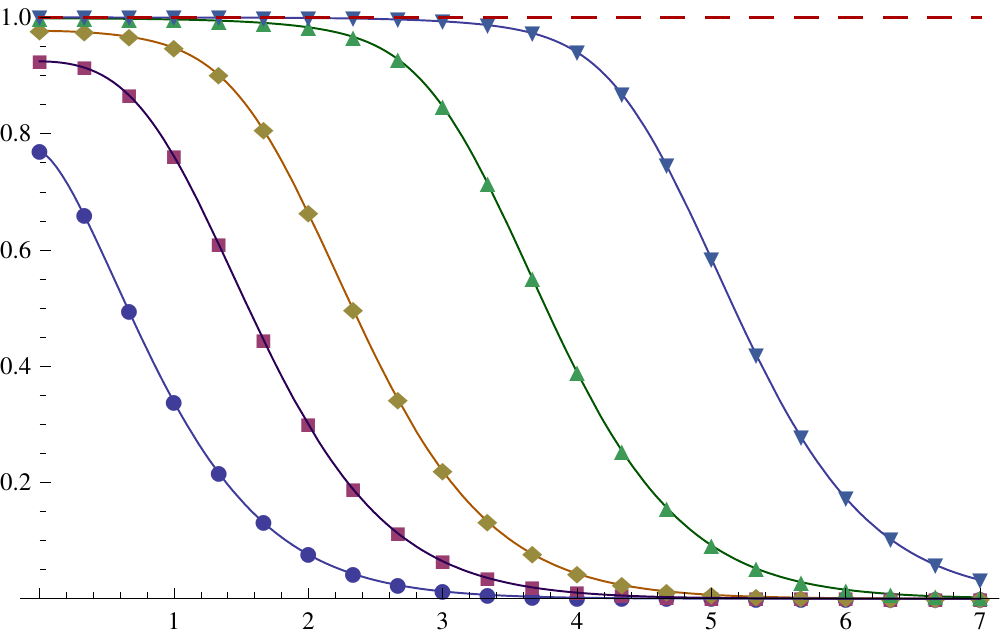}\caption{\label{fig:Quotient}Quotient of density functions $\frac{\partial^{2}}{\partial\rho^{2}}F_{n}$
over the density of $\omega_{\mathrm{FS}}$ for $n=2$ (\protect\includegraphics[bb=0bp 0bp 9bp 12bp]{DotBlueCircle}),
$n=4$ (\protect\includegraphics{DotPurpleSquare}), $n=8$ (\protect\includegraphics{DotYellowDiamond}),
$n=32$ (\protect\includegraphics{DotGreenTriangle}), $n=128$ (\protect\includegraphics{DotBlueTriangle})}
\end{figure}

Another interesting plot to consider is the quotient of $\frac{\partial^{2}}{\partial\rho^{2}}F_{n}$
over $[(1+e^{\rho})(1+e^{-\rho})]^{-1}$ (Figure \ref{fig:Quotient}).
The plot suggests that the quotient is a strictly decreasing function
for $\rho>0$ which remains close to $1$ for some initial interval
and then rapidly decay to $0$.

\begin{figure}
\includegraphics[width=0.85\textwidth]{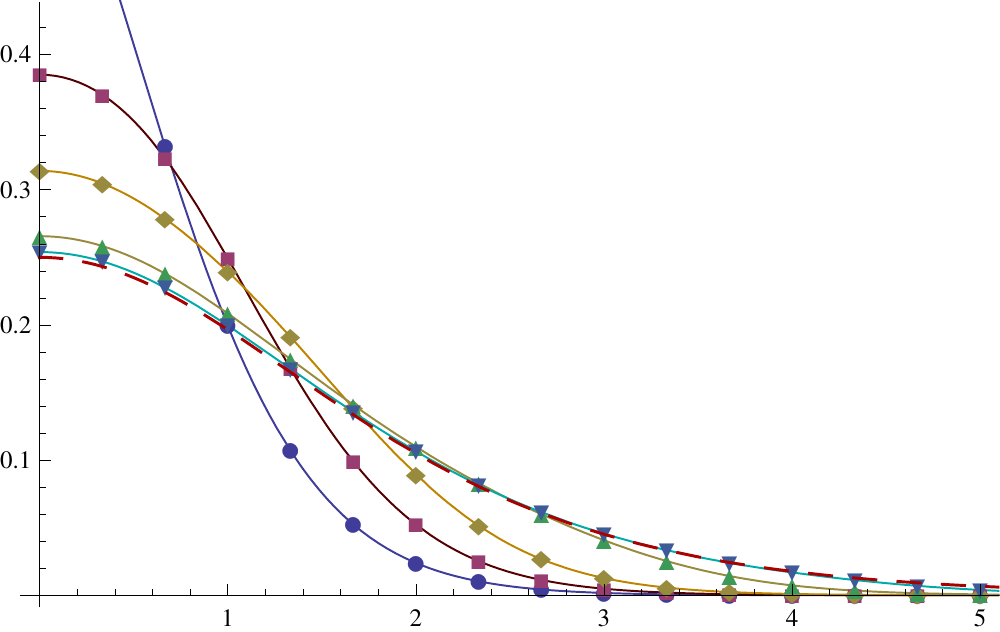}\caption{\label{fig:AfterNormalization}Normalized density functions $\frac{1}{\mathbb{E}_{n}(X)}\frac{\partial^{2}}{\partial\rho^{2}}F_{n}$
for $n=2$ (\protect\includegraphics[bb=0bp 0bp 9bp 12bp]{DotBlueCircle}),
$n=4$ (\protect\includegraphics{DotPurpleSquare}), $n=8$ (\protect\includegraphics{DotYellowDiamond}),
$n=32$ (\protect\includegraphics{DotGreenTriangle}), and $n=128$
(\protect\includegraphics{DotBlueTriangle}). Also the density of
$\omega_{\mathrm{FS}}$ (\protect\includegraphics[bb=0bp 18bp 23bp 27bp,clip]{DotDashed}).}
\end{figure}

Finally, we can consider the distribution with density $[\mathbb{E}_{n}(X)]^{-1}\frac{\partial}{\partial\rho^{2}}F_{n}$,
where $X$ is the integer-valued random variable on $\mathrm{Poly}(N)$
which maps a polynomial $p$ to the integer indicating the number
of roots of $p$ which are nonzero, $\mathbb{E}_{n}(X)=\lim_{N\to\infty}\frac{1}{N}\mathbb{E}_{N,n}(X)$,
and $\mathbb{E}_{N,n}(X)$ is the expected number of nonzero roots
of an $n$-nomial of degree $N$ where the spectra are chosen uniformly
at random. An exercise in combinatorics, we have 
\[
\mathbb{E}_{N,n}(X)=N\frac{n-1}{n+1}+2\frac{n}{n+1}.
\]
Note that we also have $\int_{\mathbb{C}}\mathbb{E}_{N,n}[Z_{P^{N}}]=\mathbb{E}_{N,n}(X)$.
Dividing both sides by $N$ and taking the limit as $N$ tends to
infinity, we obtain
\begin{alignat*}{1}
\int_{\mathbb{R}}\frac{\partial^{2}}{\partial\rho^{2}}F_{n}d\rho & =\mathbb{E}_{n}(X)=\frac{n-1}{n+1}.
\end{alignat*}
Thus the distribution with density $[\mathbb{E}_{n}(X)]^{-1}\frac{\partial}{\partial\rho^{2}}F_{n}$
is a probability distribution and comparing these distributions suggests
that the nonzero roots of an $n$-nomial are closer to the unit sphere,
when $n$ is small, and begin to spread out away from the unit sphere,
as $n$ increases (Figure \ref{fig:AfterNormalization}).

\section{\label{sec:Additional-Preliminaries}Rewriting the function $F_{n}$
(Part I)}

In Section 4.1 of ``Random complex fewnomials, I,'' Shiffman and
Zelditch obtain an alternate expression for $F_{n}$ and we duplicate
the work here for both the sake of clarity and the subsequent use
of the functions involved \cite{SZ}. Let $\Delta$ be the unit simplex
in $\mathbb{R}^{m}$. For $\rho\in\mathbb{R}^{m}$, we write $e^{\rho}=(e^{\rho_{1}},\dotsc,e^{\rho_{m}})$
and $\left|e^{\rho}\right|=\sum_{j=1}^{m}e^{\rho_{j}}$. Define
\begin{gather}
b:\Delta\times\mathbb{R}^{m}\to[0,\infty)\label{eq:b}\\
(\lambda,\rho)\mapsto\bigl\langle\hat{\lambda},\log\hat{\lambda}\bigr\rangle-\bigl\langle\rho,\lambda\bigr\rangle+\log(1+\left|e^{\rho}\right|)\nonumber 
\end{gather}
so that
\[
F_{n}(\rho)=\log(1+\left|e^{\rho}\right|)-\int_{\Delta^{n}}\min_{j=1,\dots,n}b(\lambda^{j},\rho)\, d\lambda^{1}\dotsm d\lambda^{n}.
\]

We proceed to use the following fact: Let $X$ be a nonnegative random
variable on a probability space $(\Omega,\mathcal{A},\mathcal{P}),$
and let $D_{X}(t)=\mathcal{P}(X\leq t)$ be its distribution function.
Then the expected value of $X$ is given by
\begin{equation}
\mathbb{E}(X)=\int Xd\mathcal{P}=\int_{0}^{\infty}[1-D_{x}(t)]dt.\label{eq:probabilityfact}
\end{equation}

If we let 
\[
D(t,\rho)=\mathcal{P}\{\lambda\in\Delta\mid b(\lambda,\rho)\leq t\}
\]
 be the distribution function for $b(\lambda)=b(\lambda,\rho)$, where
$d\mathcal{P}(\lambda)=d\lambda=m!d\lambda_{1}\cdots d\lambda_{m}$,
then the distribution function for the random variable
\begin{equation}
X(\lambda^{1},\dots,\lambda^{n})=\min\{b(\lambda^{1}),\dots,b(\lambda^{n})\}\label{eq:distributionfunction}
\end{equation}
on $\Delta^{n}$, with product measure $d\mathcal{P}(\lambda^{1})\dotsm d\mathcal{P}(\lambda^{n})$,
is given by
\begin{equation}
D_{X}=1-(1-D)^{n}.\label{eq:distributionofX}
\end{equation}

Using (\ref{eq:probabilityfact}) with (\ref{eq:distributionfunction})
and (\ref{eq:distributionofX}), we obtain 
\begin{equation}
F_{n}(\rho)-\log(1+\left|e^{\rho}\right|)=-\int_{\Delta^{n}}X\, d\lambda^{1}\dotsm d\lambda^{n}=-\int_{0}^{\infty}[1-D(t,\rho)]^{n}dt.\label{eq:main3}
\end{equation}

Having rewritten the potential function $F_{n}$ as an integral depending
on the distribution function $D$ of $b$, we are one step closer
to showing it is twice differentiably continuous. The function $D$,
however, is not useful to us in its current form and we are motivated
to determine another way to write it.

\section{\label{sec:Notation-and-Properties}Notation and properties of individual
functions}

In Section \ref{sec:Notation-and-Properties}, we introduce notation,
analyze the function $b$ which occurs in the definition of $D$,
discuss the case for $m=1$, and generalize the discussion to higher
dimension by introducing a function $h$ which behaves like an inverse
of $b$. We conclude the section by investigating the continuity of
various partial derivatives of two functions which we introduce in
Section \ref{sub:DefineFunctions} (namely the functions $B$ and
$h$).

\subsection{Notation and the function $b$}

Let $\Delta$ be the unit simplex in $\mathbb{R}^{m}$; explicitly,
we have 
\begin{alignat*}{1}
\Delta & =\{\lambda\in\mathbb{R}^{m}\mid\lambda_{i}\geq0\text{ for all }i=1,\dots,m\mbox{\text{ and }}\left|\lambda\right|\leq1\}\\
 & =\{\lambda\in\mathbb{R}^{m}\mid\lambda_{i}\geq0\text{ for all }i=0,1,\dots,m\},
\end{alignat*}
 where 
\[
\left|\lambda\right|\doteq\lambda_{1}+\dotsb+\lambda_{m}
\]
 and 
\begin{equation}
\lambda_{0}\doteq1-\left|\lambda\right|\Longleftrightarrow\sum_{i=0}^{m}\lambda_{i}=1.\label{eq:lambdazero}
\end{equation}

Then let $\Delta^{\circ}$ denote its interior and $\partial\Delta$
denote its boundary. For $i=0,1,\dots,m$, define the $i$-th facet
of $\Delta$ to be $\partial^{i}\Delta\doteq\{\lambda\in\Delta\mid\lambda_{i}=0\}$.
It follows that $\partial\Delta$ is a union of the facets $\partial^{i}\Delta$.
For $i\in\{0,1,\dots,m\}$, we also define $v_{i}\in\Delta$ to be
the vertex of $\Delta$ which is nonadjacent to the $i$-th facet.
By definition, the $i$-th vertex $v_{i}$ is the point $\lambda\in\Delta$
with $\lambda_{i}=1$ (equivalently, we may write $v_{i}=(\delta_{1i},\delta_{2i},\dots,\delta_{mi})$,
where $\delta_{ij}$ is the Kronecker delta function).

Now write $e^{\rho}=(e^{\rho_{1}},\dots,e^{\rho_{m}})$ and define
the (surjective) map
\begin{gather*}
\mu:\mathbb{R}^{m}\twoheadrightarrow\Delta^{\circ}\\
\rho\mapsto\frac{e^{\rho}}{1+\left|e^{\rho}\right|}.
\end{gather*}
Denoting the $i$-th component of $\mu$ by $\mu_{i}$, we have $\mu(\rho)=(\mu_{1}(\rho),\dots,\mu_{m}(\rho))$
and by (\ref{eq:lambdazero}) we obtain 
\[
\mu_{0}(\rho)=1-\left|\mu(\rho)\right|=\frac{1}{1+\left|e^{\rho}\right|}.
\]
Defining $\rho_{0}=0$, then we have 
\[
\mu_{i}(\rho)=\frac{e^{\rho_{i}}}{1+\left|e^{\rho}\right|}
\]
for $i=0,1,\dots,m$. We remark that the ability to generalize the
formula above to include the case for $i=0$ comes from associating
a polynomial $\sum_{\left|\alpha\right|\leq N}c_{\alpha}z^{\alpha}$
on $\mathbb{C}^{m}$ with the homogeneous polynomial $\sum_{\left|\alpha\right|\leq N}c_{\alpha}z_{0}^{\alpha_{0}}z^{\alpha}$
on $\mathbb{C}^{m+1}$. Then the original polynomial is obtained by
restricting the homogeneous polynomial to $z_{0}=1$. Since $\rho_{i}=\log\left|z_{i}\right|^{2}$,
we have $\rho_{0}=0$.

For $i=1,\dots,m$, we compute the first partial derivatives of $\mu_{i}=\mu_{i}(\rho)$
with respect to $\rho_{p}$. When $p=i$, we have
\[
\mu_{i,i}\doteq\frac{\partial}{\partial\rho_{i}}\mu_{i}=\frac{e^{\rho_{i}}}{1+\left|e^{\rho}\right|\vphantom{)^{2}}}-\frac{e^{\rho_{i}}e^{\rho_{i}}}{(1+\left|e^{\rho}\right|)^{2}}.
\]
When $p\neq i$, we have
\[
\mu_{i,p}\doteq\frac{\partial}{\partial\rho_{p}}\mu_{i}=-\frac{e^{\rho_{i}}e^{\rho_{p}}}{(1+\left|e^{\rho}\right|)^{2}}.
\]
And last, but not least, for $i=0$, we have
\[
\mu_{0,p}\doteq\frac{\partial}{\partial\rho_{p}}\mu_{0}=-\frac{e^{\rho_{0}}e^{\rho_{p}}}{(1+\left|e^{\rho}\right|)^{2}}.
\]
Conveniently, we summarize the above computations into one succinct
formula and write
\begin{equation}
\mu_{i,p}=\mu_{i\&p}-\mu_{i}\mu_{p},\label{eq:firstderivativeofmu}
\end{equation}
where 
\begin{alignat*}{1}
\mu_{i\&j} & \doteq\begin{cases}
\mu_{i} & \text{if }i=j,\\
0 & \text{otherwise}.
\end{cases}\\
 & =\delta_{ij}\mu_{i}
\end{alignat*}
As a side note, we observe that the indices exhibit a symmetry (e.g.,
$\mu_{i,p}=\mu_{p,i}$ for $i\neq0$) because $\mu_{i}=\frac{\partial}{\partial\rho_{i}}\log(1+\left|e^{\rho}\right|)$.

Next we analyze the function $b$ which we defined at (\ref{eq:b}). 
\begin{lem}
\label{lem:functionb}The function 
\begin{alignat*}{1}
b(\lambda,\rho) & =\bigl\langle\hat{\lambda},\log\hat{\lambda}\bigr\rangle-\bigl\langle\rho,\lambda\bigr\rangle+\log(1+\left|e^{\rho}\right|)\\
 & =\log(1+\left|e^{\rho}\right|)+\sum_{i=0}^{m}\left[\lambda_{i}\log\lambda_{i}-\rho_{i}\lambda_{i}\right],
\end{alignat*}
for $(\lambda,\rho)\in\Delta\times\mathbb{R}^{m}$, has the following
properties:

1) $b$ is smooth on $\Delta^{\circ}\times\mathbb{R}^{m}$,

2) $b$ is strictly convex with respect to the variable $\lambda$,

3) $b(\lambda,\rho)=0$ if and only $\lambda=\mu(\rho)$, and

4) $\max_{\lambda\in\Delta_{m}}b(\lambda,\rho)=\log(1+\left|e^{\rho}\right|)-\min\{\rho_{0},\rho_{1},\dots,\rho_{m}\}$. \end{lem}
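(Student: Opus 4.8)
The plan is to dispatch the four claims in order, reading everything off the rewritten formula
$b(\lambda,\rho)=\log(1+|e^{\rho}|)+\sum_{i=0}^{m}\bigl[\lambda_{i}\log\lambda_{i}-\rho_{i}\lambda_{i}\bigr]$, where throughout $\lambda_{0}=1-|\lambda|$ and $\rho_{0}=0$. Claim (1) is immediate: on $\Delta^{\circ}$ every $\lambda_{i}$, including $\lambda_{0}$, is strictly positive, so each $\lambda_{i}\log\lambda_{i}$ is smooth there, $\log(1+|e^{\rho}|)$ is smooth in $\rho$, and $b$ is a finite sum of smooth functions.

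For (2) I would use $\lambda_{0}$ as a dependent variable and differentiate in $\lambda_{1},\dots,\lambda_{m}$, obtaining $\partial b/\partial\lambda_{k}=\log\lambda_{k}-\log\lambda_{0}-\rho_{k}$ and hence $\partial^{2}b/\partial\lambda_{k}\partial\lambda_{l}=\delta_{kl}/\lambda_{k}+1/\lambda_{0}$. Thus the Hessian of $b$ in $\lambda$ is $\operatorname{diag}(1/\lambda_{1},\dots,1/\lambda_{m})+\lambda_{0}^{-1}\mathbf{1}\mathbf{1}^{\top}$, a positive-definite diagonal matrix plus a positive-semidefinite rank-one matrix, so it is positive definite on $\Delta^{\circ}$. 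Strict convexity on all of $\Delta$ then follows since, up to the affine term $-\langle\rho,\lambda\rangle$, the map $\lambda\mapsto b(\lambda,\rho)$ is the sum $\sum_{i=0}^{m}g(\lambda_{i})$ of the strictly convex function $g(x)=x\log x$ (with $g(0)=0$) composed with the injective affine coordinate map $\lambda\mapsto(\lambda_{0},\lambda_{1},\dots,\lambda_{m})$; along any nondegenerate segment in $\Delta$ some coordinate varies, contributing strict convexity.

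For (3) the key observation is that $b(\lambda,\rho)$ is exactly the Kullback--Leibler divergence of $\lambda$ from $\mu(\rho)$. Indeed $\log\mu_{i}(\rho)=\rho_{i}-\log(1+|e^{\rho}|)$ for $i=0,\dots,m$, so using $\sum_{i=0}^{m}\lambda_{i}=1$ and $\rho_{0}=0$ one checks directly that $\sum_{i=0}^{m}\lambda_{i}\log\bigl(\lambda_{i}/\mu_{i}(\rho)\bigr)=b(\lambda,\rho)$; Gibbs' inequality then gives $b\ge 0$ with equality iff $\lambda=\mu(\rho)$. (Equivalently, without invoking entropy: $b(\mu(\rho),\rho)=0$ by the same computation, and the gradient formula from (2) shows $\partial b/\partial\lambda_{k}$ vanishes at $\lambda=\mu(\rho)\in\Delta^{\circ}$ because $\log\mu_{k}-\log\mu_{0}-\rho_{k}=\rho_{0}=0$; since $b(\cdot,\rho)$ is strictly convex, its unique interior critical point $\mu(\rho)$ is its unique global minimizer on $\Delta$, with value $0$.)

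For (4) I would use that a convex function on the polytope $\Delta$ — extended continuously to $\partial\Delta$ via $g(0)=0$ — attains its maximum at a vertex; by part (2), $b(\cdot,\rho)$ is such a function. At the vertex $v_{j}$ one has $\lambda_{i}=\delta_{ij}$, so all the $\lambda_{i}\log\lambda_{i}$ terms vanish and $b(v_{j},\rho)=\log(1+|e^{\rho}|)-\rho_{j}$; maximizing over $j\in\{0,1,\dots,m\}$ gives $\log(1+|e^{\rho}|)-\min_{j}\rho_{j}$, as claimed. The only real content is the identification in (3) of $b(\cdot,\rho)$ as a relative entropy (equivalently, recognizing $\mu(\rho)$ as the unique interior critical point of the strictly convex $b(\cdot,\rho)$), from which both the sign of $b$ and the location of its zero drop out; the remaining work — chiefly checking that the continuity of $x\log x$ up to $x=0$ lets the convexity and the max-at-a-vertex arguments extend from $\Delta^{\circ}$ to $\partial\Delta$ — is routine.
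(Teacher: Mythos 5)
Your proposal is correct, and for properties (1), (2), and (4) it follows the paper's proof essentially verbatim: the same Hessian $\operatorname{diag}(\lambda_{1}^{-1},\dots,\lambda_{m}^{-1})+\lambda_{0}^{-1}\mathbf{1}\mathbf{1}^{\top}$ gives strict convexity, and the maximum is evaluated at the vertices via $b(v_{j},\rho)=\log(1+\left|e^{\rho}\right|)-\rho_{j}$. The one genuine difference is in (3): the paper checks $b(\mu(\rho),\rho)=0$, solves $b_{\lambda_{i}}=0$ to locate the unique critical point at $\mu(\rho)$, and uses strict convexity to conclude that this is the unique zero --- which is exactly your parenthetical fallback --- whereas your primary argument identifies $b(\cdot,\rho)$ with the relative entropy $\sum_{i=0}^{m}\lambda_{i}\log\bigl(\lambda_{i}/\mu_{i}(\rho)\bigr)$ (using $\log\mu_{i}(\rho)=\rho_{i}-\log(1+\left|e^{\rho}\right|)$ and $\sum_{i}\lambda_{i}=1$) and quotes Gibbs' inequality. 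That identification buys a little extra: it gives $b\geq0$ with equality exactly at $\mu(\rho)$ on the whole closed simplex at once, including boundary points where some $\lambda_{i}=0$, while the paper's critical-point computation lives on $\Delta^{\circ}$ and leaves the extension of strict convexity and minimality to $\partial\Delta$ implicit --- a small point you also patch explicitly in (2) via the strict convexity of $x\mapsto x\log x$ along segments. Either route is sound, and after unwinding the formula for $\mu_{i}(\rho)$ the underlying computations coincide.
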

\begin{proof}
The first property is clear.

To show that $b$ is strictly convex with respect to $\lambda$, we
compute the Hessian of $b(\cdot,\rho)$. Alternatively, one can also
check the definition of convexity and make use of the fact that $x\mapsto x\log x$
is a convex function for $x>0$. In any case, we move forward and
compute the first partial derivatives of $b$ in $\lambda$ to be
\[
b_{\lambda_{i}}(\lambda,\rho)=\log\lambda_{i}-\log\lambda_{0}-\rho_{i},\quad\text{for }(\lambda,\rho)\in\Delta^{\circ}\times\mathbb{R}^{m}
\]
and we compute the second partial derivatives of $b$ in $\lambda$
to be 
\[
b_{\lambda_{i}\lambda_{j}}(\lambda,\rho)=\frac{1}{\lambda_{i}}\delta_{ij}+\frac{1}{\lambda_{0}},\quad\text{for }(\lambda,\rho)\in\Delta^{\circ}\times\mathbb{R}^{m}.
\]
Thus, the Hessian of $b(\cdot,\rho)$ is given by
\[
H(\lambda)=\left[\begin{array}{cccc}
\lambda_{1}^{-1} & 0 & \cdots & 0\\
0 & \lambda_{2}^{-1} & \cdots & 0\\
\vdots & \vdots & \ddots & 0\\
0 & 0 & 0 & \lambda_{m}^{-1}
\end{array}\right]+\lambda_{0}^{-1}\left[\begin{array}{cccc}
\vphantom{\lambda_{1}^{-1}}1 & 1 & \cdots & 1\\
1 & \vphantom{\lambda_{1}^{-1}}1 & \cdots & 1\\
\vdots & \vdots & \ddots & \vdots\\
1 & 1 & \cdots & \vphantom{\lambda_{1}^{-1}}1
\end{array}\right].
\]
Then for any $v\in\mathbb{R}^{m}\setminus\{0\}$ and $\lambda\in\Delta^{\circ}$,
we have
\begin{alignat*}{1}
v^{\intercal}H(\lambda)\, v & =\sum_{i=1}^{m}\lambda_{i}^{-1}v_{i}^{2}+\lambda_{0}^{-1}\left[\sum_{i=1}^{n}v_{i}\right]^{2}>0
\end{alignat*}
and conclude the Hessian of $b(\cdot,\rho)$ is positive definite
on the interior of the unit simplex. Hence, $b(\cdot,\rho)$ is strictly
convex on the unit simplex.

For the third property, it is an immediate check that $b\bigl(\mu(\rho),\rho\bigr)=0$
and we are left to show the forward implication. We fix $\rho\in\mathbb{R}^{m}$
and solve the system of equations $b_{\lambda_{i}}(\lambda,\rho)=0$
in order to determine critical points (for maxima and minima) of $b(\cdot,\rho)$.
For $i=1,\dotsc,m$, we immediately obtain $\lambda_{i}=\lambda_{0}e^{\rho_{i}}$.
Together with $\sum_{i=0}^{m}\lambda_{i}=1$, we have $\lambda_{0}+\sum_{i=1}^{m}(\lambda_{0}e^{\rho_{i}})=1$
and we obtain $\lambda_{0}=\mu_{0}(\rho)$. Thus, we have $\lambda_{i}=\mu_{0}(\rho)e^{\rho_{i}}=\mu_{i}(\rho)$
for $i=1,\dotsc,m$ and we conclude $b(\cdot,\rho)$ has a critical
point at $\mu(\rho)$. We deduce from the strict convexity of $b(\cdot,\rho)$
that $b(\cdot,\rho)$ has a unique global minimum at $\mu(\rho)$.
It follows that $b(\lambda,\rho)=0$ only if $\lambda=\mu(\rho)$,
as desired.

Finally, because $b$ is convex on $\Delta$, it must take its maximum
at one of the vertices $v_{i}$ of $\Delta$. For $i=0,1,\dots m$,
we have $b(v_{i},\rho)=\log(1+\left|e^{\rho}\right|)-\rho_{i}$. Therefore,
we take the maximum over $i=0,1,\dots,m$ and obtain the fourth and
final property.
\end{proof}

\subsection{\label{sub:Motivation}Motivation}

While the cases for $m=1$ and $m=2$ do not require special attention,
our ability to visualize real-valued functions on $\mathbb{R}$ and
$\mathbb{R}^{2}$ come in handy to understand the work done for arbitrary
dimensions. For $m=1$, Lemma \ref{lem:functionb} implies the function
\[
b(\lambda,\rho)=\lambda\log\lambda+(1-\lambda)\log(1-\lambda)-\lambda\rho+\log(1+e^{\rho})
\]
is convex in $\lambda$ with an absolute minimum at $\lambda=\mu(\rho)=e^{\rho}[1+e^{\rho}]^{-1}$.
Consequently, the function $b(\cdot,\rho)$ has an inverse branch
$g_{-1}(\cdot,\rho)$ on the interval $[0,\mu(\rho)]$ and an inverse
branch $g_{1}(\cdot,\rho)$ on the interval $[\mu(\rho),1]$. Taking
these branches and extending their domain continuously to $[0,\infty)$,
we obtain 
\[
g_{-1}(t,\rho)=\begin{cases}
\lambda, & \text{if }t\in[0,b(0,\rho)],\, b(\lambda,\rho)=t,\text{ and }\lambda\in[0,\mu(\rho)]\\
0, & \text{if }t\in[b(0,\rho),\infty)
\end{cases}
\]
and 

\[
g_{1}(t,\rho)=\begin{cases}
\lambda, & \text{if }t\in[0,b(1,\rho)],\, b(\lambda,\rho)=t,\text{ and }\lambda\in[\mu(\rho),1]\\
1, & \text{if }t\in[b(1,\rho),\infty),
\end{cases}
\]
so that
\begin{align*}
D(t,\rho) & =\mathcal{P}\{\lambda\in\Delta\mid b(\lambda,\rho)\leq t\}\\
 & =g_{1}(t,\rho)-g_{-1}(t,\rho).
\end{align*}
This evaluation of $D$ as a difference of inverse branches is due
to Shiffman and Zelditch and found in Section 4.1.1 of their paper
``Random complex fewnomials, I,'' \cite{SZ}. When looking to generalize
the idea to higher dimensions, we further write
\begin{align*}
D(t,\rho) & =g_{1}(t,\rho)-\mu(\rho)+\mu(\rho)-g_{-1}(t,\rho)\\
 & =\left|g_{1}(t,\rho)-\mu(\rho)\right|+\left|g_{-1}(t,\rho)-\mu(\rho)\right|.
\end{align*}

With the above in mind, we consider the case for $m=2$. As with the
case for $m=1$, Lemma \ref{lem:functionb} implies the function $b(\cdot,\rho)$
is convex with an absolute minimum at $\lambda=\mu(\rho)$. Consequently,
given any unit direction $u\in\mathbb{R}^{2}$, the function $b(\cdot,\rho)$
has an inverse branch $g_{u}(\cdot,\rho)$ with respect to the direction
$u$. Taking these branches and extending their domain continuously
to $[0,\infty)$, we obtain
\[
g_{u}(t,\rho)=\begin{cases}
\lambda, & \text{if }t\in[0,b(x_{u},\rho)],\, b(\lambda,\rho)=t,\text{ and }\lambda=\mu+su\text{ for some }s\geq0\\
x_{u} & \text{if }t\in[b(x_{u},\rho),\infty),
\end{cases}
\]
where $x_{u}$ is the unique element in the boundary of the unit simplex
$\partial\Delta$ such that $x_{u}=\mu+s_{u}u$ for some $s_{u}>0$.
(We note that such an $x_{u}$ exists and is unique because the unit
simplex $\Delta$ is convex and $\mu\in\Delta$.) Then, loosely speaking,
the function $D$ in the case for $m=2$ would be the integral of
the lengths $\left\Vert f_{u}(t,\rho)-\mu(\rho)\right\Vert $ over
all unit directions $u$ in $\mathbb{R}^{2}$. However, the definition
of $f_{u}$ is quite cumbersome and we are actually interested in
the lengths $\left\Vert f_{u}(t,\rho)-\mu(\rho)\right\Vert $. Thus,
an small improvement is to consider the functions 
\begin{equation}
\tilde{f}_{u}(t,\rho)=\begin{cases}
s, & \text{if }t\in[0,b(x_{u},\rho)]\text{ and }b(\mu+su,\rho)=t\text{ for some }s\geq0\\
s_{u}, & \text{if }t\in[b(x_{u},\rho),\infty),
\end{cases}\label{eq:auxiliaryfunction}
\end{equation}
where $x_{u}$ is the unique element in the boundary of the unit simplex
$\partial\Delta$ such that $x_{u}=\mu+s_{u}u$ for some $s_{u}>0$.
This will be our initial approach in Section \ref{sub:DefineFunctions}
as it allows us to apply the implicit function theorem. For $m=1$,
we have
\[
D(t,\rho)=\tilde{f}_{1}(t,\rho)+\tilde{f}_{-1}(t,\rho).
\]

However, when we want to actually take derivatives and set up an integral,
it is convenient to parametrize using the boundary points $x$ in
$\partial\Delta$, instead of using unit directions $u$ in $\mathbb{R}^{2}$.
Explicitly, we consider the functions
\begin{equation}
f_{x}(t,\rho)=\begin{cases}
s, & \text{if }t\in[0,b(x,\rho)]\text{ and }b((1-s)\mu+sx,\rho)=t\\
1, & \text{if }t\in[b(x,\rho),\infty).
\end{cases}\label{eq:mainfunction}
\end{equation}
Note, however, the formulation of this family of functions $\{f_{x}\}_{x\in\partial\Delta}$
is simpler than the formulation of the family of functions $\{\tilde{f}_{u}\}_{u\in S^{m}}$.
In addition, the range of $f_{x}(\cdot,\rho)$, for any $x\in\partial\Delta$,
is the interval $[0,1]$, while the range of $\tilde{f}_{u}(\cdot,\rho)$
varies as $u$ varies. Noting that $(1-s)\mu+sx=\mu+s(x-\mu)$, we
obtain 
\begin{equation}
f_{x}(t,\rho)=\frac{1}{\left\Vert x-\mu\right\Vert }\tilde{f}_{u}(t,\rho),\quad\text{where }u=\frac{x-\mu}{\left\Vert x-\mu\right\Vert },\label{eq:relationship-1}
\end{equation}
or equivalently
\[
\tilde{f}_{u}(t,\rho)=\left\Vert x_{u}-\mu\right\Vert f_{x_{u}}(t,\rho),\quad\text{where }x_{u}=\mu+su\in\partial\Delta,\, s>0,
\]
which gives a one-to-one correspondence between the family of functions
$\{\tilde{f}_{u}(\cdot,\rho)\}_{u\in S^{1}}$ and $\{f_{x}(\cdot,\rho)\}_{x\in\partial\Delta}$.
For $m=1$, we have
\begin{alignat}{1}
D(t,\rho) & =\tilde{f}_{1}(t,\rho)+\tilde{f}_{-1}(t,\rho)\label{eq:Dwrtf}\\
 & =\left|1-\mu_{1}\right|f_{1}(t,\rho)+\left|0-\mu_{1}\right|f_{0}(t,\rho)\nonumber \\
 & =\mu_{0}\cdot f_{1}(t,\rho)+\mu_{1}\cdot f_{0}(t,\rho).\nonumber 
\end{alignat}

This completes our motivation for the functions we plan to define.

\subsection{\label{sub:DefineFunctions}Defining the Functions $\Lambda$, $B$,
and $h$}

First, we define some auxiliary functions, which after this section,
we no longer consider. Specifically, we will set out to rigorously
define a function $\tilde{h}$, in fact $\tilde{h}(t,\rho,v)=\tilde{f}_{v/\left\Vert v\right\Vert }(t,\rho)$
(\ref{eq:auxiliaryfunction}), only to rigorously define a function
$h$, in fact $h(t,\rho,x)=f_{x}(t,\rho)$ (\ref{eq:mainfunction}),
which inherits desired smoothness properties from $h$ by (\ref{eq:relationship-1}).

We start by defining the (surjective) map 
\begin{gather*}
\tilde{\Lambda}:\mathbb{R}\times\mathbb{R}^{m}\times(\mathbb{R}^{m}\setminus\{0\})\twoheadrightarrow\mathbb{R}^{m}\\
(\tilde{s},\rho,v)\mapsto\mu(\rho)+\tilde{s}\frac{v}{\bigl\Vert v\bigr\Vert}
\end{gather*}
and consider the function 
\[
\tilde{B}(\tilde{s},\rho,v)=b\bigl(\tilde{\Lambda}(\tilde{s},\rho,v),\rho\bigr),
\]
whose domain we denote by $\tilde{S}$. By Lemma \ref{lem:functionb},
the function $\tilde{B}$ is smooth on the interior of $\tilde{S}$
and $\tilde{B}(\cdot,\rho,v)$ is a strictly increasing function for
$\tilde{s}>0$. Combining these facts with the implicit function theorem,
there exists an open set $\tilde{H}$ in $(0,\infty)\times\mathbb{R}^{m}\times(\mathbb{R}^{m}\setminus\{0\})$
and a smooth function $\tilde{h}:\tilde{H}\to[0,\infty)$ such that
\begin{equation}
\tilde{B}\bigl(\tilde{h}(t,\rho,v),\rho,v\bigr)=t,\quad\text{for }(t,\rho,v)\in\tilde{H}.\label{eq:temporaryequality}
\end{equation}
For the sake of completeness, we give a description of the set $\tilde{H}$,
but note that it is not essential, but note that it has little importance.
For $\rho\in\mathbb{R}^{m}$, $v\in\mathbb{R}^{m}\setminus\{0\}$,
and $(t,\rho,v)\in\tilde{H}$, then $t$ must belong to the interior
of the range of $\tilde{B}(\cdot,\rho,v)\vert_{\tilde{s}\geq0}$.
In other words, we have 
\[
\tilde{H}=\{(t,\rho,v)\mid\tilde{B}(\tilde{s},\rho,v)=t\mbox{\text{ for some }}(\tilde{s},\rho,v)\in\tilde{S}\text{ with }\tilde{s}>0\}.
\]

Now fix $x\in\partial\Delta$, take (\ref{eq:temporaryequality}),
and set $v=x-\mu(\rho)\in\mathbb{R}^{m}\setminus\{0\}$. Then we have
\[
b\left(\mu+\tilde{h}(t,\rho,x-\mu)\frac{x-\mu}{\left\Vert x-\mu\right\Vert },\rho\right)=t,
\]
or
\[
b\bigl(\mu+h(t,\rho,x)(x-\mu),\rho\bigr)=t,\quad\text{where }h(t,\rho,x)=\frac{1}{\bigl\Vert x-\mu\bigr\Vert}\tilde{h}(t,\rho,x-\mu).
\]

Similar to replacing $\{f_{u}\}_{u\in S^{1}}$ with $\{f_{x}\}_{x\in\Delta}$
in Section \ref{sub:Motivation}, we will replace $\tilde{\Lambda}$,
$\tilde{B}$, $\tilde{H}$, and $\tilde{h}$, with their analogs $\Lambda$,
$B$, $H$, and $h$. We define the (surjective) map
\begin{gather*}
\Lambda:[0,1]\times\mathbb{R}^{m}\times\partial\Delta\twoheadrightarrow\Delta\\
(s,\rho,x)\mapsto\mu(\rho)+s\left(x-\mu(\rho)\right)\\
\hphantom{(s,\rho,x)}=(1-s)\mu(\rho)+sx\hphantom{\mu},
\end{gather*}
the function
\begin{gather*}
B:[0,1]\times\mathbb{R}^{m}\times\partial\Delta\to[0,\infty)\\
(s,\rho,x)\mapsto b\bigl(\Lambda(s,\rho,x),\rho\bigr),
\end{gather*}
the domain 
\[
H=\{(t,\rho,x)\in(0,\infty)\times\mathbb{R}^{m}\times\partial\Delta\mid0<t<b(x,\rho)\},
\]
and the function
\begin{gather*}
h:H\twoheadrightarrow(0,1)\\
(t,\rho,x)\mapsto s\text{ such that }B(s,\rho,x)=t.
\end{gather*}

Using Lemma \ref{lem:functionb} and the observation that the mixed
partial derivatives of the map $\Lambda$ in $s$ and $\rho$ of all
orders are continuous on $[0,1]\times\mathbb{R}^{m}\times\partial\Delta$,
we find the mixed partial derivatives of $B$ in $s$ and $\rho$
(of all orders) are continuous on $[0,1)\times\mathbb{R}^{m}\times\partial\Delta$.
Lemma \ref{lem:functionb} also implies that $B$ is a strictly increasing
function with respect to $s$. 

Similarly for $h$, we note that the partial derivatives of the maps
$(\rho,x)\mapsto x-\mu(\rho)$ and $(\rho,x)\mapsto\left\Vert x-\mu(\rho)\right\Vert ^{-1}$
in $\rho$ of all orders are continuous on $\mathbb{R}^{m}\times\partial\Delta$.
Then the mixed partial derivatives of $h$ in $t$ and $\rho$ (of
all orders) are continuous on $H$, since $\tilde{h}\in\mathcal{C}^{\infty}(\tilde{H})$.
We also have the function $h$ is strictly increasing with respect
to $t$, because the function $B$ is strictly increasing with respect
to $s$.

\subsection{Properties of the function $B$}

As the definition of $h$ is implicitly defined by $B$, we proceed
by first investigating $B$ and its derivatives along the boundary
of its domain. A summary may be found in Section \vref{sub:SummarizeB}.

\subsubsection{\label{sub:AlongSis0}Along $s=0$.}

As mentioned in Section \ref{sub:DefineFunctions}, the mixed partial
derivatives of $B$ in $s$ and $\rho$ (of all orders) are continuous
on $[0,1)\times\mathbb{R}^{m}\times\partial\Delta$. Thus, along $s=0$,
we simply look to evaluate the derivatives of $B$.

The first useful fact in our computations follows from the definition
of $\mu_{0}$ and $\mu_{i}$. For $i=0,1,\dots m$, we have 
\begin{equation}
\log[\mu_{0}]=\log[\mu_{i}]-\rho_{i}.\label{eq:fact1}
\end{equation}
Taking the first partial derivative of $B$ with respect to $s$,
we obtain
\[
B_{s}(s,\rho,x)=\sum_{i=0}^{m}\Bigl[\log[(1-s)\mu_{i}+sx_{i}]-\rho_{i}\Bigr](x_{i}-\mu_{i})
\]
and note that it is nonnegative, equaling $0$ if and only if $s=0$,
because $B$ is strictly increasing for $s>0$. Checking $B_{s}(0,\rho,x)=0$,
we write
\begin{alignat}{1}
B_{s}(0,\rho,x) & =\sum_{i=0}^{m}\Bigl[\log[\mu_{i}]-\rho_{i}\Bigr](x_{i}-\mu_{i})\label{eq:lim0Bs}\\
 & \overset{\ref{eq:fact1}}{=}\log[\mu_{0}]\cdot\sum_{i=0}^{m}(x_{i}-\mu_{i})=0,\nonumber 
\end{alignat}
where the last equality is given by 
\begin{equation}
\sum_{i=0}^{m}(x_{i}-\mu_{i})=\sum_{i=0}^{m}x_{i}-\sum_{i=0}^{m}\mu_{i}\overset{\ref{eq:lambdazero}}{=}1-1=0.\label{eq:fact2}
\end{equation}

While evaluating $B_{s}$ at $s=0$ took several steps, no heavy computation
is required for evaluating $B_{ss}$ along $s=0$. We have
\[
B_{ss}(s,\rho,x)=\sum_{i=0}^{m}\frac{(x_{i}-\mu_{i})^{2}}{(1-s)\mu_{i}+sx_{i}}>0,
\]
and so 
\begin{equation}
B_{ss}(0,\rho,x)=\sum_{i=0}^{m}\frac{(x_{i}-\mu_{i})^{2}}{\mu_{i}}.\label{eq:lim0Bss}
\end{equation}

Now let us differentiate $B$ with respect to $\rho_{p}$. As we apply
the chain rule, we note that
\[
b_{\rho_{i}}(\lambda,\rho)=-\lambda_{i}+\mu_{i}(\rho),\quad\text{for }(\lambda,\rho)\in\Delta\times\mathbb{R}^{m}
\]
and so
\[
B_{\rho_{p}}(s,\rho,x)=-s(x_{p}-\mu_{p})+(1-s)\sum_{i=0}^{m}\Bigl[\log[(1-s)\mu_{i}+sx_{i}]-\rho_{i}\Bigr]\mu_{i,p}.
\]
Evaluating the derivative for $s=0$, we have 
\begin{alignat*}{1}
B_{\rho_{p}}(0,\rho,x) & =\sum_{i=0}^{m}\Bigl[\log[\mu_{i}]-\rho_{i}\Bigr]\mu_{i,p}\\
 & \overset{\ref{eq:fact1}}{=}\log[\mu_{0}]\cdot\sum_{i=0}^{m}\mu_{i,p}=0,
\end{alignat*}
where the last equality is given by
\begin{equation}
\sum_{i=0}^{m}\mu_{i,p}=\frac{\partial}{\partial\rho_{p}}\left[\sum_{i=0}^{n}\mu_{i}\right]\overset{\ref{eq:lambdazero}}{=}0.\label{eq:fact3}
\end{equation}

Next, we consider the partial derivative
\begin{multline*}
B_{\rho_{p}s}(s,\rho,x)=-(x_{p}-\mu_{p})+\sum_{i=0}^{m}\left[-\log[(1-s)\mu_{i}+sx_{i}]+\rho_{i}+\frac{(1-s)(x_{i}-\mu_{i})}{(1-s)\mu_{i}+sx_{i}}\right]\mu_{i,p}
\end{multline*}
so that
\begin{alignat}{1}
B_{\rho_{p}s}(0,\rho,x) & =-(x_{p}-\mu_{p})+\sum_{i=0}^{m}\left[-\log[\mu_{i}]+\rho_{i}+\frac{(x_{i}-\mu_{i})}{\mu_{i}}\right]\mu_{i,p}\label{eq:lim0Brhops}\\
 & \overset{\ref{eq:fact1}}{=}-(x_{p}-\mu_{p})+\sum_{i=0}^{m}\left[-\log[\mu_{0}]+\frac{(x_{i}-\mu_{i})}{\mu_{i}}\right]\mu_{i,p}\nonumber \\
 & \overset{\ref{eq:fact3}}{=}-(x_{p}-\mu_{p})+\sum_{i=0}^{m}\left[\frac{(x_{i}-\mu_{i})}{\mu_{i}}\right]\mu_{i,p}=0,\nonumber 
\end{alignat}
where the last equality is given by
\[
\sum_{i=0}^{m}\frac{x_{i}-\mu_{i}}{\mu_{i}}\mu_{i,p}\overset{\ref{eq:firstderivativeofmu}}{=}\sum_{i=0}^{m}(x_{i}-\mu_{i})(\delta_{ip}-\mu_{p})\overset{\ref{eq:fact2}}{=}x_{p}-\mu_{p}.
\]

Finally, we have the partial derivative
\begin{multline*}
B_{\rho_{p}\rho_{q}}(s,\rho,x)=(2s-1)\mu_{p,q}+\sum_{i=0}^{m}\frac{\mu_{i,q}\mu_{i,p}\cdot(1-s)^{2}}{(1-s)\mu_{i}+sx_{i}}\\
+(1-s)\sum_{i=0}^{m}\Bigl[\log[(1-s)\mu_{i}+sx_{i}]-\rho_{i}\Bigr]\mu_{i,p,q}
\end{multline*}
 for which 
\begin{alignat*}{1}
B_{\rho_{p}\rho_{q}}(0,\rho,x) & =-\mu_{p,q}+\sum_{i=0}^{m}\frac{\mu_{i,q}\mu_{i,p}}{\mu_{i}}+\sum_{i=0}^{m}\Bigl[\log[\mu_{i}]-\rho_{i}\Bigr]\mu_{i,p,q}.\\
 & \overset{\ref{eq:fact1}}{=}-\mu_{p,q}+\sum_{i=0}^{m}\frac{\mu_{i,q}\mu_{i,p}}{\mu_{i}}+\log[\mu_{0}]\cdot\sum_{i=0}^{m}\mu_{i,p,q}\\
 & =-\mu_{p,q}+\sum_{i=0}^{m}\mu_{i,q}\cdot\frac{\mu_{i,p}}{\mu_{i}}=0,
\end{alignat*}
where the third equality is given by
\[
\sum_{i=0}^{m}\mu_{i,p,q}=\frac{\partial^{2}}{\partial\rho_{p}\partial\rho_{q}}\left[\sum_{i=0}^{m}\mu_{i}\right]\overset{\ref{eq:lambdazero}}{=}0,
\]
and the last equality is given by 
\[
\sum_{i=0}^{m}\mu_{i,q}\cdot\frac{\mu_{i,p}}{\mu_{i}}\overset{\ref{eq:firstderivativeofmu}}{=}\sum_{i=0}^{m}\mu_{i,q}\cdot(\delta_{ip}-\mu_{p})\overset{\ref{eq:fact3}}{=}\mu_{p,q}.
\]

\subsubsection{Along $s=1$.\label{sub:AlongSis1}}

In contrast to Section \ref{sub:AlongSis0} where we simply evaluate
various partial derivatives of $B$ along $s=0$, this section deals
with the limiting behavior of the partial derivatives as $s$ tends
to $1$.

Then let us begin by fixing $\tilde{\rho}\in\mathbb{R}^{m}$ and $\tilde{x}\in\partial\Delta$.
We also consider the variables $s\in(0,1)$, $\rho\in\mathbb{R}^{m}$,
and $x\in\partial\Delta$.

For the limit of $B_{s}$ as $(s,\rho,x)$ tends to $(1,\tilde{\rho},\tilde{x})$,
we note that the summands $(x_{i}-\mu_{i})\log[(1-s)\mu_{i}+sx_{i}]$
of $B_{s}$, for all $i=0,1,\dots,m$ such that $\tilde{x}_{i}\neq0$,
and the summands $-\rho_{i}(x_{i}-\mu_{i})$ of $B_{s}$, for all
$i=0,1,\dots,m$, have finite limits as $(s,\rho,x)$ tends to $(1,\tilde{\rho},\tilde{x})$.
We next consider the subset $S\subset\{0,1,\dotsc,m\}$ of indices
for which $\tilde{x}_{i}=0$. Then there exists $\delta_{1}>0$ such
that $x_{i}-m_{i}<0$ for all $i\in S$ and $\left\Vert (s,\rho,x)-(1,\tilde{\rho},\tilde{x})\right\Vert <\delta_{1}$.
Thus all the summands of $\sum_{i\in S}(x_{i}-\mu_{i})\log[(1-s)\mu_{i}+sx_{i}]$
are positive and we have 
\begin{alignat*}{1}
\sum_{i\in S}(x_{i}-\mu_{i})\log[(1-s)\mu_{i}+sx_{i}] & \geq(x_{k}-\mu_{k})\log[(1-s)\mu_{k}+sx_{k}]\\
 & =-\mu_{k}\cdot\log[(1-s)\mu_{k}],
\end{alignat*}
where $k$ is an index for which $x_{k}=0$. For any $M>0$, there
exists $\delta_{2}$ such that $-\mu_{k}\cdot\log[(1-s)\mu_{k}]>M$
for all $k\in S$ and $\left\Vert (s,\rho,x)-(1,\tilde{\rho},\tilde{x})\right\Vert <\delta_{2}$.
We conclude that 
\begin{equation}
\lim_{(s,\rho,x)\to(1,\tilde{\rho},\tilde{x})}B_{s}(s,\rho,x)=\infty.\label{eq:lim1Bs}
\end{equation}

For the limit of $B_{ss}$ as $(s,\rho,x)$ tends to $(1,\tilde{\rho},\tilde{x})$,
we first note that all the summands are positive and proceed similar
to the proof of (\ref{eq:lim1Bs}). We have
\begin{alignat*}{1}
B_{ss}(s,\rho,y) & \geq(x_{k}-\mu_{k})^{2}[(1-s)\mu_{k}+sx_{k}]^{-1}\\
 & =(-\mu_{k})^{2}[(1-s)\mu_{k}]^{-1}=(1-s)^{-1}\mu_{k},
\end{alignat*}
where $k$ is an index for which $x_{k}=0$. For any $M>0$, there
exists a $\delta>0$ such that $\mu_{k}\cdot(1-s)^{-1}>M$ for all
$k\in\{0,1,\dots,m\}$ and $\left\Vert (s,\rho,x)-(1,\rho^{0},x^{0})\right\Vert <\delta$.
Thus, we have 
\[
\lim_{(s,\rho,x)\to(1,\tilde{\rho},\tilde{x})}B_{ss}(s,\rho,x)=\infty.
\]

Next we look at the limit of $B_{\rho_{p}}$ as $(s,\rho,x)$ tends
to $(1,\tilde{\rho},\tilde{x})$. We note that the summation $\sum_{i=0}^{m}(1-s)(-\rho_{i})\mu_{i,\rho}$
tends to $0$ and the term $-s(x_{p}-\mu_{p})$ tends to $-[\tilde{x}_{p}-\mu_{p}(\tilde{\rho})]$
as $(s,\rho,x)$ tends to $(1,\tilde{\rho},\tilde{x})$. Thus it remains
to investigate the limit of $\sum_{i=0}^{m}(1-s)\mu_{i,p}\cdot\log[(1-s)\mu_{i}+sx_{i}]$
as $(s,\rho,x)$ tends to $(1,\tilde{\rho},\tilde{x})$. We have
\begin{alignat*}{1}
\left|(1-s)\sum_{i=0}^{m}\mu_{i,p}\cdot\log[(1-s)\mu_{i}+sx_{i}]\right| & \leq(1-s)\sum_{i=0}^{m}\left|\mu_{i,p}\cdot\log[(1-s)\mu_{i}+sx_{i}]\right|\\
 & =-(1-s)\sum_{i=0}^{m}\left|\mu_{i,p}\right|\log[(1-s)\mu_{i}+sx_{i}]\\
 & \leq-(1-s)\sum_{i=0}^{m}\left|\mu_{i,p}\right|\log[(1-s)\mu_{i}],
\end{alignat*}
where the last inequality is due to the inequality $(1-s)\mu_{i}+sx_{i}\geq(1-s)\mu_{i}$
and the fact that $\log(\cdot)$ is an increasing function. Next,
we note that there exists a neighborhood $N$ of $(1,\tilde{\rho},\tilde{x})$
such that for all $(s,\rho,x)$ in $N$ and all $i=0,1,\dotsc,m$
we have $\left|\mu_{i,p}\right|<M$ and $-\log[\mu_{i}]<M$ for some
$M>0$. Letting $\varepsilon>0$, there also exists a neighborhood
$\tilde{N}$ of $(1,\tilde{\rho},\tilde{x})$ such that $-(1-s)\log[(1-s)]<\varepsilon$
and $(1-s)<\varepsilon$ for all $(s,\rho,x)$ in $\tilde{N}$. It
follows that the summation $\sum_{i=0}^{m}(1-s)\mu_{i,p}\cdot\log[(1-s)\mu_{i}+sx_{i}]$
tends to $0$ as $(s,\rho,x)$ tends to $(1,\tilde{\rho},\tilde{x})$.
Therefore, we obtain 
\begin{equation}
\lim_{(s,\rho,x)\to(1,\tilde{\rho},\tilde{x})}B_{\rho_{p}}(s,\rho,x)=-[\tilde{x}_{p}-\mu_{p}(\tilde{\rho})].\label{eq:lim1Brhop}
\end{equation}

Finally, for the limit of $B_{\rho_{p}\rho_{q}}$, we note that the
term $(2s-1)\mu_{p,q}$ tends to $\mu_{p,q}(\tilde{\rho})$ as $(s,\rho,x)$
tends to $(1,\tilde{\rho},\tilde{x})$ and the summation $(1-s)\sum_{i=0}^{m}\Bigl[\log[(1-s)\mu_{i}+sx_{i}]-\rho_{i}\Bigr]\mu_{i,p,q}$
tends to $0$ for the same reason that $\sum_{i=0}^{m}(1-s)\Bigl[\log[(1-s)\mu_{i}+sx_{i}]-\rho_{i}\Bigr]m_{i,p}$
tends to $0$ in the proof of (\ref{eq:lim1Brhop}). We are left to
investigate the limit of the remaining summand $\sum_{i=0}^{m}\mu_{i,q}\mu_{i,p}\cdot(1-s)^{2}[(1-s)\mu_{i}+sx_{i}]^{-1}$
as $(s,\rho,x)$ tends to $(1,\tilde{\rho},\tilde{x})$. First taking
the absolute value and applying the triangle inequality, we have 
\begin{alignat*}{1}
\left|\sum_{i=0}^{m}\frac{\mu_{i,q}\mu_{i,p}\cdot(1-s)^{2}}{(1-s)\mu_{i}+sx_{i}}\right| & \leq\sum_{i=0}^{m}\frac{\left|\mu_{i,q}\mu_{i,p}\right|(1-s)^{2}}{(1-s)\mu_{i}+sx_{i}}\\
 & \leq\sum_{i=0}^{m}\frac{\left|\mu_{i,q}\mu_{i,p}\right|(1-s)^{2}}{(1-s)\mu_{i}}\\
 & =(1-s)\sum_{i=0}^{m}\frac{\left|\mu_{i,q}\mu_{i,p}\right|}{\mu_{i}}.
\end{alignat*}
Thus, the summand tends to $0$ as $(s,\rho,x)$ tends to $(1,\tilde{\rho},\tilde{x})$,
because $\sum_{i=0}^{m}\left|\mu_{i,q}\mu_{i,p}\right|[\mu_{i}]^{-1}$
is uniformly bounded in a small enough neighborhood of $(1,\tilde{\rho},\tilde{x})$.
We conclude that 
\[
\lim_{(s,\rho,x)\to(1,\tilde{\rho},\tilde{x})}B_{\rho_{p}\rho_{q}}(s,\rho,x)=\mu_{p,q}(\tilde{\rho}).
\]

\subsubsection{Summary for $B$\label{sub:SummarizeB}}

Here we summarize the computations for $B$ and its partial derivatives.
Throughout this summary, we write $f(1,\tilde{\rho},\tilde{x})$ to
mean ${\displaystyle \lim_{(s,\rho,x)\to(1,\tilde{\rho},\tilde{x})}}f(s,\rho,x)$.

The function 
\[
B(s,\rho,x)=b(\Lambda(s,\rho,x),\rho)
\]
 is continuous on $[0,1]\times\mathbb{R}^{m}\times\partial\Delta$
and we have
\[
B(0,\rho,x)=0,\,\text{and }B(1,\rho,x)=b(x,\rho).
\]

Taking a partial derivative of $B$ with respect to $s$, the function
$B_{s}$ is only continuous on $[0,1)\times\mathbb{R}^{m}\times\partial\Delta$
and we have
\[
B_{s}(0,\rho,x)=0\text{ and }B_{s}(1,\rho,x)=\infty.
\]

The function $B_{ss}$, like $B_{s}$, is only continuous on $[0,1)\times\mathbb{R}^{m}\times\partial\Delta$
and we have
\[
B_{ss}(0,\rho,x)=\sum_{i=0}^{m}\frac{(x_{i}-\mu_{i})^{2}}{\mu_{i}}\text{ and }B_{ss}(1,\rho,x)=\infty.
\]

In contrast, the function $B_{\rho_{p}}$ can be extended continuously
to $[0,1]\times\mathbb{R}^{m}\times\partial\Delta$ (from $[0,1)\times\mathbb{R}^{m}\times\partial\Delta$)
and we have
\[
B_{\rho_{p}}(0,\rho,x)=0\text{ and }B_{\rho_{p}}(1,\rho,x)=-[x_{p}-\mu_{p}(\rho)].
\]

Finally, the function $B_{\rho_{p}\rho_{q}}$ can also be extended
continuously to $[0,1]\times\mathbb{R}^{m}\times\partial\Delta$ (from
$[0,1)\times\mathbb{R}^{m}\times\partial\Delta$) and we have
\[
B_{\rho_{p}\rho_{q}}(0,\rho,x)=0\text{ and }B_{\rho_{p}\rho_{q}}(1,\rho,x)=\mu_{p,q}(p).
\]

\subsection{Properties of the function $h$ }

Having detailed the essential computations for the partial derivatives
of $B$, we return to the function $h$ and recall that
\begin{gather*}
h:H\twoheadrightarrow(0,1)\\
(t,\rho,x)\mapsto s\text{ such that }B(s,\rho,x)=t,
\end{gather*}
where $H=\{(t,\rho,x)\in(0,\infty)\times\mathbb{R}^{m}\times\partial\Delta\mid0<t<b(x,\rho)\}$.
Differentiating the equation $B\left(h(t,\rho,x),\rho,x\right)=t$
implicitly in $t$ and $\rho_{p}$, we obtain
\begin{equation}
h_{t}(t,\rho,x)=\frac{1}{B_{s}(h,\rho,x)}>0,\label{eq:ht}
\end{equation}
and
\begin{equation}
h_{\rho_{p}}(t,\rho,x)=-\frac{B_{\rho_{p}}(h,\rho,x)}{B_{s}(h,\rho,x)}.\label{eq:hrhop}
\end{equation}
We recall these partial derivatives are continuous on $H$ and look
to investigate their behavior on the boundary of $H$. In section
\ref{sub:Alongh1}, we will look at the behavior as $(t,\rho,x)$
tends to $(0,\tilde{\rho},\tilde{x})$; in section \ref{sub:Alongh2},
we will look at the behavior as $(t,\rho,x)$ tends to $(b(\tilde{x},\tilde{\rho}),\tilde{\rho},\tilde{x})$;
and in section \ref{sub:Summarizeh}, we will summarize these computations.

\subsubsection{Along $(0,\rho,x)$.\label{sub:Alongh1}}

Fix $(\tilde{\rho},\tilde{x})\in\mathbb{R}^{n}\times\partial\Delta$.
Also consider the points $(t,\rho,x)\in H$.

For the limit of $h$ as $(t,\rho,x)$ tends to $(0,\tilde{\rho},\tilde{x})$,
we keep in mind that $h$ is an increasing function with respect to
$t$. Let $\varepsilon>0$, then there exists $\delta_{1}>0$ such
that $\left|h(\delta_{1},\tilde{\rho},\tilde{x})\right|<\varepsilon/2$
and $\delta_{1}<\tilde{t}/2$. By the continuity of $h$ on the interior
of $H$, there exists $\delta_{2}>0$ such that $\left|h(\delta_{1},\tilde{\rho},\tilde{x})-h(\delta_{1},\rho,x)\right|<\varepsilon/2$
for all $\left\Vert (\rho,x)-(\tilde{\rho},\tilde{x})\right\Vert <\delta_{2}$.
Then let $\delta=\min(\delta_{1},\delta_{2})$ so that $\left\Vert (t,\rho,x)-(0,\tilde{\rho},\tilde{x})\right\Vert <\delta$
implies
\begin{alignat*}{1}
0\leq h(t,\rho,x) & <h(\delta,\rho,x)\\
 & \leq h(\delta_{1},\rho,x)\\
 & =h(\delta_{1},\rho,x)-h(\delta_{1},\tilde{\rho},\tilde{x})+h(\delta_{1},\tilde{\rho},\tilde{x})\\
 & <\frac{\varepsilon}{2}+\frac{\varepsilon}{2}=\varepsilon.
\end{alignat*}
Thus we have
\begin{equation}
\lim_{(t,\rho,x)\to(0,\tilde{\rho},\tilde{x})}h(t,\rho,x)=0.\label{eq:lim0h}
\end{equation}

For the next limit, we look at (\ref{eq:ht}) in conjunction with
(\ref{eq:lim0Bs}) and (\ref{eq:lim0h}) and immediately obtain
\[
\lim_{(t,\rho,x)\to(0,\tilde{\rho},\tilde{x})}h_{t}(t,\rho,x)=\infty.
\]

For the limit of $h_{\rho_{p}}$ as $(t,\rho,x)$ tends to $(0,\tilde{\rho},\tilde{x})$,
we apply the mean value theorem and imitate the idea of l'Hospital's
rule. By the mean value theorem, we have
\begin{alignat*}{1}
h_{\rho_{p}}(t,\rho,x) & =-\frac{B_{\rho_{p}}(h,\rho,x)}{B_{s}(h,\rho,x)}\cdot\frac{\frac{1}{h}}{\frac{1}{h}}\\
 & =-\frac{B_{\rho_{p}s}(\xi_{1},\rho,x)}{B_{ss}(\xi_{2},\rho,x)}
\end{alignat*}
for some $\xi_{1},\xi_{2}\in(0,h)$. Together with (\ref{eq:lim0Bss}),
(\ref{eq:lim0Brhops}), and (\ref{eq:lim0h}), we obtain
\begin{equation}
\lim_{(t,\rho,x)\to(0,\tilde{\rho},\tilde{x})}h_{\rho_{p}}(t,\rho,x)=0.\label{eq:lim0hrhop}
\end{equation}

Finally, we look at the limit of $h\cdot h_{t}$ as $(t,\rho,x)$
tends to $(0,\tilde{\rho},\tilde{x})$. As with the proof of \ref{eq:lim0hrhop},
we apply the mean value theorem and imitate the idea of l'Hospital's
rule. By the mean value theorem, we have
\begin{alignat*}{1}
h(t,\rho,x)h_{t}(t,\rho,x) & \overset{\ref{eq:ht}}{=}\frac{1}{\frac{1}{h}B_{s}(h,\rho,x)}\\
 & =\frac{1}{B_{ss}(\xi,\rho,x)}
\end{alignat*}
for some $\xi\in(0,h)$. By a combination of (\ref{eq:lim0h}) and
(\ref{eq:lim0Bss}), we have
\[
\lim_{(t,\rho,x)\to(0,\tilde{\rho},\tilde{x})}h(t,\rho,x)h_{t}(t,\rho,x)=\left[\sum_{i=0}^{m}\frac{\left[x_{i}-\mu_{i}(\tilde{\rho})\right]^{2}}{\mu_{i}(\tilde{\rho})}\right]^{-1}.
\]
Thus, while $h_{t}$ tends to $\infty$ as $(t,\rho,x)$ tends to
$(0,\tilde{\rho},\tilde{x})$, the product of $h$ with $h_{t}$ has
a finite limit as $(t,\rho,x)$ tends to $(0,\tilde{\rho},\tilde{x})$.

\subsubsection{Along $(b(x,\rho),\rho,x)$.\label{sub:Alongh2}}

Fix $(\tilde{\rho},\tilde{x})\in\mathbb{R}^{n}\times\partial\Delta$
and $\tilde{t}=b(\tilde{x},\tilde{\rho})$. Also consider the points
$(t,\rho,x)\in H$.

Let us start by determining the limit of $h$ as $(t,\rho,x)$ tends
to $(\tilde{t},\tilde{\rho},\tilde{x})$. As with the proof of (\ref{eq:lim0h}),
we keep in mind that $h$ is an increasing function with respect to
$t$. Let $\varepsilon>0$, then there exists $\delta_{1}>0$ such
that $\tilde{t}-\delta_{1}>0$ and $[1-h(\tilde{t}-\delta_{1},\tilde{\rho},\tilde{x})]<\varepsilon/2$.
We also have $\tilde{t}-\delta_{1}<\tilde{t}=b(\tilde{x},\tilde{\rho})$
and choose, by the continuity of $b$, a $\delta_{2}>0$ so that $\left|b(x,\rho)-b(\tilde{x},\tilde{\rho})\right|<\delta_{1}$
for all $\left\Vert (\rho,x)-(\tilde{\rho},\tilde{x})\right\Vert <\delta_{2}$.
This ensures $b(x,\rho)>b(\tilde{x},\tilde{\rho})-\delta_{1}=\tilde{t}-\delta_{1}$
for all $\left\Vert (t,\rho,x)-(\tilde{t},\tilde{\rho},\tilde{x})\right\Vert <\delta_{2}$
and consequently $(\tilde{t}-\delta_{1},\rho,x)\in H$. Then by the
continuity of $h$ on $H$, there exists $\delta_{3}>0$ (chosen to
be less than $\delta_{2}$) such that $\left|h(\tilde{t}-\delta_{1},\tilde{\rho},\tilde{x})-h(\tilde{t}-\delta_{1},\rho,x)\right|<\varepsilon/2$
for all $\left\Vert (\rho,x)-(\tilde{\rho},\tilde{x})\right\Vert <\delta_{3}$.
Then let $\delta=\min(\delta_{1},\delta_{3})$ so that $\left\Vert (t,\rho,x)-(\tilde{t},\tilde{\rho},\tilde{x})\right\Vert <\delta$
implies 
\begin{alignat*}{1}
0\leq1-h(t,\rho,x) & <1-h(\tilde{t}-\delta,\rho,x)\\
 & \leq1-h(\tilde{t}-\delta_{1},\rho,x)\\
 & =1-h(\tilde{t}-\delta_{1},\tilde{\rho},\tilde{x})+h(\tilde{t}-\delta_{1},\tilde{\rho},\tilde{x})-h(\tilde{t}-\delta_{1},\rho,x)\\
 & <\frac{\varepsilon}{2}+\frac{\varepsilon}{2}=\varepsilon.
\end{alignat*}
Thus we have
\begin{equation}
\lim_{(t,\rho,x)\to(\tilde{t},\tilde{\rho},\tilde{x})}h(t,\rho,x)=1.\label{eq:limth}
\end{equation}

Next, we note that (\ref{eq:lim1Bs}) and (\ref{eq:limth}) imply
\[
\lim_{(t,\rho,x)\to(\tilde{t},\tilde{\rho},\tilde{x})}h_{t}(t,\rho,x)=0.
\]

Finally, we combine (\ref{eq:lim1Bs}), (\ref{eq:lim1Brhop}), and
(\ref{eq:limth}) with (\ref{eq:hrhop}) to conclude
\[
\lim_{(t,\rho,x)\to(\tilde{t},\tilde{\rho},\tilde{x})}h_{\rho_{p}}(t,\rho,x)=0.
\]

\subsubsection{Summary for $h$\label{sub:Summarizeh}}

From the work shown above, we conclude that $h$ can be extended continuously
to $[0,\infty)\times\mathbb{R}^{m}\times\partial\Delta$ (from $(0,b(x,\rho))\times\mathbb{R}^{m}\times\partial\Delta$).
Explicitly, we have

\[
h(t,\rho,x)=\begin{cases}
0, & \text{if }t=0\\
B(h,\rho,x)=t, & \text{if }0\leq t\leq b(x,\rho)\\
1, & \text{if }t\geq b(x,\rho).
\end{cases}
\]

The partial derivative $h_{t}$ can only be extended continuously
to $(0,\infty)\times\mathbb{R}^{n}\times\partial\Delta$ (from $(0,b(x,\rho))\times\mathbb{R}^{n}\times\partial\Delta$)
and it tends towards infinity as $(t,\rho,x)$ tends to $(0,\tilde{\rho},\tilde{x})$
for any $(\tilde{\rho},\tilde{x})\in\mathbb{R}^{m}\times\partial\Delta$.
We have
\begin{alignat*}{1}
h_{t}(t,\rho,x)=\begin{cases}
\dfrac{1}{B_{s}(h\rho,x)}, & \text{if }0<t\leq b(x,\rho)\\
0, & \text{if }t\geq b(x,\rho).
\end{cases}
\end{alignat*}

Of slightly greater importance, we can continuously extend the product
$h\cdot h_{t}$ to $[0,\infty)\times\mathbb{R}^{m}\times\partial\Delta$
(from $(0,b(x,\rho))\times\mathbb{R}^{m}\times\partial\Delta$) and
obtain
\[
\left(h\cdot h_{t}\right)(t,\rho,x)=\begin{cases}
\left[\sum_{i=0}^{n}\dfrac{\left[x_{i}-\mu_{i}\right]^{2}}{\mu_{i}}\right]^{-1}, & \text{if }t=0\\
\dfrac{h}{B_{s}(h,\rho,x)}, & \text{if }0\leq t\leq b(x,\rho)\\
0, & \text{if }t\geq b(x,\rho).
\end{cases}
\]

Finally, the partial derivative $h_{\rho_{p}}$ can be extended continuously
to $[0,\infty)\times\mathbb{R}^{m}\times\partial\Delta$ (from $(0,b(x,\rho))\times\mathbb{R}^{m}\times\partial\Delta$).
Explicitly, we have
\begin{alignat*}{1}
h_{\rho_{p}}(t,\rho,x) & =\begin{cases}
0, & \text{if }t=0\\
-\dfrac{B_{\rho_{p}}(h,\rho,x)}{B_{s}(h,\rho,x)}, & \text{if }0\leq t\leq b(x,\rho)\\
0, & \text{if }t\geq b(x,\rho).
\end{cases}
\end{alignat*}

\section{\label{sec:Proof-of-the-Theorem}Proof of the theorem}

By (\ref{eq:main3}), the proof of Theorem \ref{thm:maintheorem}
reduces to showing 
\[
G_{n}(\rho)\doteq\int_{0}^{\infty}[1-D(t,\rho)]^{n}dt
\]
is twice differentiably continuous on $\mathbb{R}^{m}$. Working with
only fixed $n$ and wishing to use subscript notation when taking
partial derivatives, we henceforth omit the subscript $n$ and simply
write $G$ instead of $G_{n}$.

In order to differentiate $G$, we first look to express $D$ as an
integral (Lemma \ref{lem:D} in Section \ref{sub:RewriteD}). The
proof of Theorem \ref{thm:maintheorem} proceeds and concludes by
qualifying various applications of Lebesgue's dominated convergence
theorem.

\subsection{\label{sub:RewriteD}Rewriting the distribution function $D(t,\rho)$
as an integral}

Before expressing $D$ as an integral, let us consider the volume
form $d\lambda=m!d\lambda_{1}\wedge\dots\wedge d\lambda_{m}$ on the
unit simplex $\Delta$. Fix $\rho\in\mathbb{R}^{m}$ and for $i=0,1,\dots,m$,
we define $R_{i}(\rho)\subset\Delta$ as the region between the point
$\mu(\rho)\in\Delta^{\circ}$ and the $i$-th facet $\partial^{i}\Delta$
of $\Delta$. Explicitly, the set 
\begin{equation}
R_{i}(\rho)\doteq\{\lambda\in\Delta\mid\lambda=(1-s)\mu(\rho)+sx,\text{ for some }x\in\partial^{i}\Delta\text{ and }0\leq s\leq1\}.\label{eq:Rk}
\end{equation}
Together with the above notation, we consider $d\lambda$ under a
change of variables given by the map $\Lambda$ from Section \ref{sec:Notation-and-Properties}.
\begin{lem}
\label{lem:changeOfVariables}Define the map 
\begin{gather*}
\Lambda:[0,1]\times\mathbb{R}^{m}\times\partial\Delta\twoheadrightarrow\Delta\\
(s,\rho,x)\mapsto(1-s)\mu(\rho)+sx
\end{gather*}
and fix a $\rho\in\mathbb{R}^{m}$. Then the function $\lambda(s,x)=\Lambda(s,\rho,x)$
restricted to $[0,1]\times\partial^{i}\Delta$ is one-to-one and onto
$R_{i}(\rho)\subset\Delta$. We also have 
\begin{equation}
d\lambda|_{R_{i}}=\mu_{i}\cdot ms^{m-1}ds\wedge dx_{(i)},\label{eq:changeofvariableslemma}
\end{equation}
where $dx_{(i)}\doteq-(m-1)!\left(\frac{\partial}{\partial x_{i}}\lrcorner dx\right)$
and $\frac{\partial}{\partial x_{0}}\doteq-\frac{1}{m}\sum_{i=1}^{m}\frac{\partial}{\partial x_{i}}$;
geometrically, the vector $\frac{\partial}{\partial x_{i}}$ is the
inward facing normal to $\partial^{i}\Delta$ for $i=0,1,\dotsc,m$. \end{lem}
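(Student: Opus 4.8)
The plan is to handle the two claims in turn: the bijectivity of the parametrization, and then the change-of-variables identity (\ref{eq:changeofvariableslemma}).

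For bijectivity, surjectivity of $\lambda(\cdot,\cdot)$ onto $R_i(\rho)$ is immediate from the defining formula (\ref{eq:Rk}). For injectivity I would use that every $x\in\partial^i\Delta$ has $x_i=0$ (with the convention $x_0=1-|x|$, so this holds for $i=0$ as well). Then the $i$-th coordinate of $\lambda(s,x)=(1-s)\mu(\rho)+sx$ is $\lambda_i=(1-s)\mu_i(\rho)$, and since $\mu(\rho)\in\Delta^{\circ}$ forces $\mu_i(\rho)>0$, this is strictly monotone in $s$; hence $s=1-\lambda_i/\mu_i(\rho)$ is recovered from $\lambda$, and then $x=s^{-1}\bigl(\lambda-(1-s)\mu(\rho)\bigr)$ is recovered once $s\neq 0$. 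So $\lambda(\cdot,\cdot)$ restricts to a bijection of $(0,1]\times\partial^i\Delta$ onto $R_i(\rho)\setminus\{\mu(\rho)\}$; the only failure of injectivity is along the measure-zero slice $\{s=0\}$, which collapses to $\mu(\rho)$ and is irrelevant for the integration formula (and is absent altogether when $m=1$, where $\partial^i\Delta$ is a single point).

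For (\ref{eq:changeofvariableslemma}) I would compute the Jacobian of $(s,x)\mapsto\lambda(s,x)$ in explicit affine coordinates on the facet. For $i\in\{1,\dots,m\}$, parametrize $\partial^i\Delta$ by $(x_1,\dots,\widehat{x_i},\dots,x_m)$ with $x_i\equiv 0$; then $\partial\lambda_i/\partial s=-\mu_i(\rho)$ and $\partial\lambda_i/\partial x_j=0$ for $j\neq i$, while $\partial\lambda_j/\partial s=x_j-\mu_j(\rho)$ and $\partial\lambda_j/\partial x_l=s\,\delta_{jl}$ for $j,l\neq i$. Expanding the determinant along the $\lambda_i$-row leaves an $(m-1)\times(m-1)$ minor equal to $s$ times the identity, so $|\det|=\mu_i(\rho)\,s^{m-1}$; inserting the factor $m!=m\cdot(m-1)!$ from $d\lambda$ and identifying $(m-1)!\,dx_1\wedge\cdots\wedge\widehat{dx_i}\wedge\cdots\wedge dx_m$ with $dx_{(i)}$ (up to the sign built into its definition) gives (\ref{eq:changeofvariableslemma}). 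For $i=0$, parametrize $\partial^0\Delta$ by $(x_1,\dots,x_{m-1})$ with $x_m=1-\sum_{j<m}x_j$; now the Jacobian carries the extra row $\partial\lambda_m/\partial x_l=-s$, and a short Laplace expansion, using the facet constraint $|x|=1$, collapses the determinant to $\pm s^{m-1}\sum_{j=1}^m\bigl(x_j-\mu_j(\rho)\bigr)=\pm s^{m-1}\bigl(1-|\mu(\rho)|\bigr)=\pm\mu_0(\rho)\,s^{m-1}$, again matching (\ref{eq:changeofvariableslemma}) once the definition $dx_{(0)}=-(m-1)!\bigl(\tfrac{\partial}{\partial x_0}\lrcorner dx\bigr)$ with $\tfrac{\partial}{\partial x_0}=-\tfrac1m\sum_{j=1}^m\tfrac{\partial}{\partial x_j}$ is used.

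The magnitude computations are routine; the main obstacle is the orientation bookkeeping, namely promoting $|\det|=\mu_i(\rho)\,s^{m-1}$ to the signed identity (\ref{eq:changeofvariableslemma}) uniformly in $i=0,1,\dots,m$. This is precisely what the conventions in the definition of $dx_{(i)}$ are designed to handle --- the leading $-(m-1)!$ and the contraction against the inward normal $\partial/\partial x_i$ --- and the $i=0$ case is the delicate one, since there the radial direction $x-\mu(\rho)$ is not transverse to a coordinate hyperplane in an obvious way, so one must rely on the symmetric choice of $\partial/\partial x_0$ and the constraint $\sum_j x_j=1$ to make the determinant simplify correctly.
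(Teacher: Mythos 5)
Your proposal is correct and follows essentially the same route as the paper: the identity (\ref{eq:changeofvariableslemma}) is obtained by computing the pullback of $d\lambda$ under $\Lambda$, and your explicit Jacobian expansion in facet coordinates (with the row-sum trick and the constraint $\left|x\right|=1$ for $i=0$) is exactly the determinant computation the paper gives as its alternative proof of the $i=0$ case, while the paper's primary presentation does the same computation via wedge products and the relations $dx_{k}=0$, $\sum_{i}dx_{i}=0$. Your bijectivity argument is in fact more careful than the paper's one-line justification, since you correctly note that injectivity only fails on the collapsed slice $s=0$, which is harmless for the integration formula.
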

\begin{proof}
The first part of the lemma is immediate from the surjectivity of
$\Lambda$ onto $\Delta$, the convexity of $\Delta$, and the definition
of $R_{i}(\rho)$. For the second part, let $\lambda(s,x)=(1-s)\mu+sx$
so that $d\lambda_{i}=(x_{i}-\mu_{i})ds+sdx_{i}$ and
\begin{alignat*}{1}
d\lambda_{1}\wedge\dotsb\wedge d\lambda_{m} & =\sum_{i=1}^{m}(x_{i}-\mu_{i})s^{m-1}dx_{1}\wedge\dotsb\wedge dx_{i-1}\wedge ds\wedge d_{i+1}\wedge\dotsb\wedge dx_{m}\\
 & =\sum_{i=1}^{m}(x_{i}-\mu_{i})s^{m-1}(-1)^{i-1}ds\wedge dx_{1}\wedge\dotsb\widehat{dx_{i}}\dotsb\wedge dx_{m}\\
 & =\sum_{i=1}^{m}(x_{i}-\mu_{i})s^{m-1}ds\wedge\left(\frac{\partial}{\partial x_{i}}\lrcorner dx\right).
\end{alignat*}
Fix $k\in\{1,\dots,m\}$. Then for any $x\in\partial^{k}\Delta$ we
have $x_{k}=0$ and this implies $dx_{k}=0$. In this case,
\[
\begin{alignedat}{1}\sum_{i=1}^{m}(x_{i}-\mu_{i})s^{m-1}ds\wedge\left(\frac{\partial}{\partial x_{i}}\lrcorner dx\right) & =-\mu_{k}\cdot s^{m-1}ds\wedge\left(\frac{\partial}{\partial x_{k}}\lrcorner dx\right)\end{alignedat}
\]
as claimed above. Now let $k=0$. Then for any $x\in\partial^{0}\Delta$
we have $\sum_{i=1}^{m}x_{i}=1$ and this implies $\sum_{i=1}^{m}dx_{i}=0$.
From this we obtain 
\[
\begin{alignedat}{1}\frac{\partial}{\partial x_{j}}\lrcorner dx & =(-1)^{j-1}dx_{1}\wedge\dotsb\widehat{dx_{j}}\dotsb\wedge dx_{m-1}\wedge dx_{m}\\
 & =(-1)^{j-1}dx_{1}\wedge\dotsb\widehat{dx_{j}}\dotsb\wedge dx_{m-1}\wedge(-\sum_{i=1}^{m-1}dx_{i})\\
 & =(-1)^{j}dx_{1}\wedge\dotsb\widehat{dx_{j}}\dotsb\wedge dx_{m-1}\wedge(dx_{j})\\
 & =(-1)^{j}(-1)^{m-j-1}dx_{1}\wedge\dotsb\wedge dx_{m-1}\\
 & =\frac{\partial}{\partial x_{m}}\lrcorner dx
\end{alignedat}
\]
for any $j=1,\dotsc,m-1$ and hence
\[
-\frac{\partial}{\partial x_{0}}\lrcorner dx=\frac{\partial}{\partial x_{j}}\lrcorner dx
\]
for any $j=1,\dotsc,m$. Thus we have
\[
\begin{alignedat}{1}\sum_{i=1}^{m}(x_{i}-\mu_{i})s^{m-1}ds\wedge\left(\frac{\partial}{\partial x_{i}}\lrcorner dx\right) & =\sum_{i=1}^{m}(x_{i}-\mu_{i})s^{m-1}ds\wedge\left(-\frac{\partial}{\partial x_{0}}\lrcorner dx\right)\\
 & =-(x_{0}-\mu_{0})s^{m-1}ds\wedge\left(-\frac{\partial}{\partial x_{0}}\lrcorner dx\right)\\
 & =-\mu_{0}\cdot s^{m-1}ds\wedge\left(\frac{\partial}{\partial x_{0}}\lrcorner dx\right)
\end{alignedat}
\]
as claimed above. For an alternative proof, rewrite $d\lambda_{m}=(x_{m}-\mu_{m})ds+sdx_{m}=(x_{m}-\mu_{m})ds-sdx_{1}-\dotsb-sdx_{m-1}$
and so $d\lambda_{1}\wedge\dotsb\wedge d\lambda_{m}=\det(M)ds\wedge dx_{1}\wedge\dotsb\wedge dx_{m-1}$,
where 
\[
M=\left[\begin{array}{ccccc}
(x_{1}-\mu_{1}) & s & 0 & \cdots & 0\\
(x_{2}-\mu_{2}) & 0 & s & \cdots & 0\\
\vdots & \vdots & \vdots & \ddots & \vdots\\
(x_{m-1}-\mu_{m-1}) & 0 & 0 & \cdots & s\\
(x_{m}-\mu_{m}) & -s & -s & \cdots & -s
\end{array}\right].
\]
By switching rows and row reducing, we obtain
\begin{alignat*}{1}
\det M & =(-1)^{m-1}\det\left[\begin{array}{ccccc}
(x_{m}-\mu_{m}) & -s & -s & \cdots & -s\\
(x_{1}-\mu_{1}) & s & 0 & \cdots & 0\\
(x_{2}-\mu_{2}) & 0 & s & \cdots & 0\\
\vdots & \vdots & \vdots & \ddots & \vdots\\
(x_{m-1}-\mu_{m-1}) & 0 & 0 & \cdots & s
\end{array}\right]\\
 & =(-1)^{m-1}\det\left[\begin{array}{ccccc}
(\sum_{i=1}^{m}x_{i}-\sum_{i=1}^{m}\mu_{i}) & 0 & 0 & \cdots & 0\\
(x_{1}-\mu_{1}) & s & 0 & \cdots & 0\\
(x_{2}-\mu_{2}) & 0 & s & \cdots & 0\\
\vdots & \vdots & \vdots & \ddots & \vdots\\
(x_{m-1}-\mu_{m-1}) & 0 & 0 & \cdots & s
\end{array}\right]\\
 & =(-1)^{m-1}\mu_{0}s^{n-1}.
\end{alignat*}

\end{proof}
Having proven Lemma \ref{lem:changeOfVariables}, we are ready to
take the vague notion of writing $D$ as an integral given in Section
\ref{sub:Motivation} and make it rigorous.
\begin{lem}
\label{lem:D}The function $D(t,\rho)=\mathcal{P}\{\lambda\in\Delta\mid b(\lambda,\rho)\leq t\}$
can be expressed as an integral. We have 
\begin{equation}
D(t,\rho)=\sum_{i=0}^{m}\mu_{i}\cdot\int_{\partial^{i}\Delta}h^{m}(t,\rho,x)\, dx_{(i)},\label{eq:DinLemmaD}
\end{equation}
where $dx_{(i)}\doteq-(n-1)!\left(\frac{\partial}{\partial x_{i}}\lrcorner dx\right)$
and $\frac{\partial}{\partial x_{0}}\doteq-\frac{1}{m}\sum_{i=1}^{m}\frac{\partial}{\partial x_{i}}$.
Moreover, the function $D$ is continuous on $[0,\infty)\times\mathbb{R}^{m}$.\end{lem}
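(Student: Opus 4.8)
The plan is to identify $D(t,\rho)$ with the $d\lambda$-measure of the sublevel set $\{\lambda\in\Delta\mid b(\lambda,\rho)\le t\}$, to slice $\Delta$ into the cones $R_i(\rho)$ of (\ref{eq:Rk}) emanating from $\mu(\rho)$, and to push each slice forward to $[0,1]\times\partial^i\Delta$ via Lemma~\ref{lem:changeOfVariables}. First I would record that $\Delta=\bigcup_{i=0}^m R_i(\rho)$: the point $\mu(\rho)$ lies in every $R_i(\rho)$, and for $\lambda\ne\mu(\rho)$ the ray from $\mu(\rho)$ through $\lambda$ meets $\partial\Delta$ at a unique point $x$, which lies in some facet $\partial^i\Delta$ since the facets cover $\partial\Delta$; by convexity the segment $[\mu(\rho),x]$ lies in $\Delta$, so $\lambda=(1-s)\mu(\rho)+sx\in R_i(\rho)$. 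Moreover, a point of $R_i(\rho)\cap R_j(\rho)$ other than $\mu(\rho)$ forces $x=x'\in\partial^i\Delta\cap\partial^j\Delta$ (the ray from an interior point meets $\partial\Delta$ once), so for $i\ne j$ the set $R_i(\rho)\cap R_j(\rho)$ is contained in $\{\mu(\rho)\}$ together with the cone from $\mu(\rho)$ over the codimension-two face $\partial^i\Delta\cap\partial^j\Delta$, hence has dimension $\le m-1$ and is $d\lambda$-null. Therefore $\int_\Delta=\sum_{i=0}^m\int_{R_i(\rho)}$ for any $d\lambda$-integrable integrand.

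Next I would change variables $\lambda=\Lambda(s,\rho,x)$ on each $R_i(\rho)$. Writing $B(s,\rho,x)=b(\Lambda(s,\rho,x),\rho)$ and using $d\lambda|_{R_i}=\mu_i\cdot m s^{m-1}\,ds\wedge dx_{(i)}$ from Lemma~\ref{lem:changeOfVariables}, the indicator $\mathbf{1}_{\{b(\lambda,\rho)\le t\}}$ pulls back to $\mathbf{1}_{\{B(s,\rho,x)\le t\}}$, so
\[
D(t,\rho)=\sum_{i=0}^m\mu_i\int_{\partial^i\Delta}\left(\int_{\{s\in[0,1]\,:\,B(s,\rho,x)\le t\}}m s^{m-1}\,ds\right)dx_{(i)}.
\]
The crucial point is that, by the strict convexity in Lemma~\ref{lem:functionb} and the summary in Section~\ref{sub:SummarizeB}, the map $s\mapsto B(s,\rho,x)$ is continuous and strictly increasing on $[0,1]$ with $B(0,\rho,x)=0$ and $B(1,\rho,x)=b(x,\rho)$; hence the inner $s$-set is exactly the interval $[0,h(t,\rho,x)]$, with $h$ taken in the extended sense of Section~\ref{sub:Summarizeh}, namely $h$ solves $B(h,\rho,x)=t$ for $0\le t\le b(x,\rho)$, equals $0$ for $t=0$, and equals $1$ for $t\ge b(x,\rho)$. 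The inner integral is then $\int_0^{h(t,\rho,x)}m s^{m-1}\,ds=h^m(t,\rho,x)$, which gives (\ref{eq:DinLemmaD}). As a consistency check, letting $t\to\infty$ recovers $\sum_i\mu_i\int_{\partial^i\Delta}dx_{(i)}=\mathcal{P}(\Delta)=1$.

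For the continuity of $D$ on $[0,\infty)\times\mathbb{R}^m$, I would invoke the results assembled in Section~\ref{sub:Summarizeh}: $h$ extends to a continuous function on $[0,\infty)\times\mathbb{R}^m\times\partial\Delta$ taking values in $[0,1]$, so $h^m$ is continuous and bounded by $1$ there. Since each facet $\partial^i\Delta$ is compact, hence of finite $dx_{(i)}$-measure, the constant $1$ is an integrable majorant, and Lebesgue's dominated convergence theorem shows $(t,\rho)\mapsto\int_{\partial^i\Delta}h^m(t,\rho,x)\,dx_{(i)}$ is continuous. Multiplying by the smooth functions $\mu_i(\rho)$ and summing over $i$ yields the continuity of $D$.

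I expect the genuinely delicate analysis to have been done already in Section~\ref{sec:Notation-and-Properties} (the extension of $h$ across the boundary of $H$), so the obstacles here are matters of care rather than depth: justifying that the cones $R_i(\rho)$ overlap only in a $d\lambda$-null set so the integral splits, verifying that $\Lambda$ is a legitimate change of variables off a null set via Lemma~\ref{lem:changeOfVariables}, and correctly matching all three branches of the extended definition of $h$—especially the degenerate cases $t=0$ and $t\ge b(x,\rho)$—to the sublevel set $\{B(\cdot,\rho,x)\le t\}$.
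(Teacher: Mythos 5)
Your proposal is correct and follows essentially the same route as the paper: decompose $\Delta$ into the cones $R_i(\rho)$, use Lemma \ref{lem:changeOfVariables} to change variables, identify the $s$-section of the sublevel set as $[0,h(t,\rho,x)]$ via the monotonicity of $B(\cdot,\rho,x)$ and its inverse $h$, integrate $ms^{m-1}$ to get $h^{m}$, and deduce continuity from dominated convergence with the bounded continuous extension of $h$. The only differences are presentational — you spell out why the overlaps of the $R_i$ are $d\lambda$-null and add the $t\to\infty$ consistency check, details the paper states more tersely.
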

\begin{proof}
Let $S=S(t,\rho)\doteq\{\lambda\in\Delta\mid b(\lambda,\rho)\leq t\}$
so that $D=\int_{S}d\lambda$ and define $S_{i}\doteq S\cap R_{i}$,
where $R_{i}$ are defined at (\ref{eq:Rk}). It follows immediately
that $S=\cup_{i=0}^{m}S_{i}$ and $\mathcal{P}(S_{i}\cap S_{j})=0$
for $i\neq j$. Thus far, we have
\[
D(t,\rho)=\sum_{i=0}^{m}\int_{S_{i}(t,\rho)}d\lambda
\]
and proceed by giving $S_{i}(t,\rho)$ explicitly. We claim that
\begin{equation}
S_{i}(t,\rho)=\{\lambda\in\Delta\mid\lambda=(1-s)\mu(\rho)+sx\text{ for some }x\in\partial^{i}\Delta\text{ and }0\leq s\leq h(t,\rho,x)\}.\label{eq:Si}
\end{equation}

Starting with the forward inclusion, we take $\lambda\in S_{i}=S\cap R_{i}$.
Then $b(\lambda,\rho)\leq t$ and $\lambda=(1-s)\mu(\rho)+sx$ for
some $x\in\partial^{i}\Delta$ and some $s\in[0,1]$. These conditions
imply that $B(s,\rho,x)=b(\lambda,\rho)\leq t$. Then since $h(\cdot,\rho,x)$
is an increasing function and inverse to $B(\cdot,\rho,x)$, we obtain
$s=h\bigl(B(s,\rho,x),\rho,x\bigr)\leq h(t,\rho,x)$ and this gives
the desired inclusion. For the reverse inclusion, let $\lambda=(1-s)\mu(\rho)+sx$
for some $x\in\partial^{i}\Delta$ and $0\leq s\leq h(t,\rho,x)$.
Then because $B(\cdot,\rho,x)$ is an increasing function and inverse
to $h(\cdot,\rho,x)$ we have $b(\lambda,\rho)=B(s,\rho,x)\leq B\bigl(h(t,\rho,x),\rho,x\bigr)=t$.
This concludes the reverse inclusion and proves (\ref{eq:Si}). 

We next make the change of variables $\lambda(s,x)=(1-s)\mu(\rho)+sx$.
Using Lemma \ref{lem:changeOfVariables}, we obtain
\begin{alignat*}{1}
\sum_{i=0}^{m}\int_{S_{i}(t,\rho)}d\lambda & \overset{\eqref{eq:changeofvariableslemma}}{=}\sum_{i=0}^{m}\int_{S_{i}(t,\rho)}\mu_{i}\cdot ms^{m-1}ds\wedge dx_{(i)}\\
 & =\sum_{i=0}^{m}\mu_{i}\cdot\int_{\partial^{i}\Delta}\left[\int_{0}^{h(t,\rho,x)}ms^{m-1}ds\right]dx_{(i)}\\
 & =\sum_{i=0}^{m}\mu_{i}\cdot\int_{\partial^{i}\Delta}h^{m}(t,\rho,x)\, dx_{(i)}.
\end{alignat*}
Note that we can apply Fubini's theorem in the second equality, because
$0\le D\leq1$ and hence the integral is finite.

The continuity of $D$ follows from Lebesgue's dominated convergence
theorem.\end{proof}
\begin{rem}
For $m=1$, the formulation (\ref{eq:DinLemmaD}) is the integration
of a $0$-form over a $0$-chain, which is evaluation at a point.
Explicitly, we have
\begin{alignat}{1}
D(t,\rho) & =\sum_{i=0}^{1}\mu_{i}\cdot\int_{\partial^{i}\Delta}h(t,\rho,x)\, dx_{(i)}\label{eq:Dwrth}\\
 & =\sum_{i=0}^{1}\mu_{i}\cdot h(t,\rho,\partial^{i}\Delta)\nonumber \\
 & =\mu_{0}\cdot h(t,\rho,\partial^{0}\Delta)+\mu_{1}\cdot h(t,\rho,\partial^{1}\Delta)\nonumber \\
 & =\mu_{0}\cdot h(t,\rho,1)+\mu_{1}\cdot h(t,\rho,0),\nonumber 
\end{alignat}
which agrees with (\ref{eq:Dwrtf}). 
\end{rem}

\subsection{The first partial derivative of $G$}

Now we look to differentiate our function $G$, bring the derivative
under the integral sign, and conclude the derivative is continuous.
\begin{lem}
\label{lem:Drhop}Given the function 
\[
D(t,\rho)=\sum_{i=0}^{m}\mu_{i}\cdot\int_{\partial^{i}\Delta}h^{m}(t,\rho,x)\, dx_{(i)}
\]
 on $[0,\infty)\times\mathbb{R}^{m}$, the partial derivative $D_{\rho_{p}}$
exists and is continuous on $[0,\infty)\times\mathbb{R}^{m}$. It
is given by
\[
D_{\rho_{p}}=\sum_{i=0}^{m}\mu_{i}\cdot\int_{\partial^{i}\Delta}mh^{m-1}h_{\rho_{p}}\, dx_{(i)}+\mu_{i,p}\cdot\int_{\partial^{i}\Delta}h^{m}\, dx_{(i)}.
\]

Consequently, the partial derivative of 
\[
G(\rho)=\int_{0}^{\infty}[1-D(t,\rho)]^{n}dt
\]
 exists and is continuous on $\mathbb{R}^{m}$. It is given by
\[
G_{\rho_{p}}=-\int_{0}^{\infty}n[1-D]^{n-1}D_{\rho_{p}}dt.
\]
\end{lem}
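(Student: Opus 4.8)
The plan is to push the $\rho_{p}$–derivative under the integral signs, first in the facet integrals defining $D$ and then in the $t$–integral defining $G$, supplying in each case a locally uniform integrable majorant so that the classical differentiation-under-the-integral-sign and continuity-of-parameter-integral theorems apply.

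For $D$, fix a facet index $i$. For each fixed $t\ge0$ and $x\in\partial^{i}\Delta$, the chain rule gives $\partial_{\rho_{p}}\bigl(h^{m}(t,\rho,x)\bigr)=m\,h^{m-1}h_{\rho_{p}}$, using that $h$ is $\mathcal{C}^{1}$ in $\rho$ with $h_{\rho_{p}}$ continuous on $[0,\infty)\times\mathbb{R}^{m}\times\partial\Delta$ (Section \ref{sub:Summarizeh}). Since each facet $\partial^{i}\Delta$ is compact and $h$ takes values in $[0,1]$, on a product set $\{\,|t'-t|\le1,\ \|\rho'-\rho\|\le1\,\}\times\partial^{i}\Delta$ — itself compact — the continuous functions $h^{m}$ and $m\,h^{m-1}h_{\rho_{p}}$ are bounded by a constant, hence majorized there by a constant function, integrable against $dx_{(i)}$ over the finite-measure facet. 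This legitimizes differentiating $\int_{\partial^{i}\Delta}h^{m}\,dx_{(i)}$ under the integral sign; combining this with the Leibniz rule for $\mu_{i}\cdot\int_{\partial^{i}\Delta}h^{m}\,dx_{(i)}$ and summing over $i$ gives the stated formula for $D_{\rho_{p}}$. Continuity of $D_{\rho_{p}}$ on $[0,\infty)\times\mathbb{R}^{m}$ then follows from smoothness of $\mu_{i},\mu_{i,p}$ together with continuity in $(t,\rho)$ of the parameter integrals $\int_{\partial^{i}\Delta}h^{m}\,dx_{(i)}$ and $\int_{\partial^{i}\Delta}m\,h^{m-1}h_{\rho_{p}}\,dx_{(i)}$, which is again dominated convergence on compact neighborhoods (the integrands being continuous in $(t,\rho,x)$ and locally bounded).

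The real difficulty is for $G$, since the $t$–integral runs over the noncompact ray $[0,\infty)$, so one must produce a majorant for $n[1-D]^{n-1}D_{\rho_{p}}$ that is integrable over all of $[0,\infty)$ and $\rho$–free on a neighborhood. The key point is that this integrand is compactly supported in $t$, with a $\rho$–locally bounded support: by Lemma \ref{lem:functionb}(4), as soon as $t>T(\rho):=\log(1+|e^{\rho}|)-\min\{\rho_{0},\dots,\rho_{m}\}$ one has $\{\lambda\in\Delta:b(\lambda,\rho)\le t\}=\Delta$, so $D(t,\rho)=1$ and hence $[1-D]^{n}$ vanishes; moreover $D\equiv1$ is locally constant in $\rho$ there, so $D_{\rho_{p}}(t,\rho)=0$ as well (equivalently $h(t,\rho,x)\equiv1$ and $h_{\rho_{p}}(t,\rho,x)\equiv0$ on $\partial\Delta$). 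In particular $G(\rho)=\int_{0}^{T(\rho)}[1-D]^{n}\,dt\le T(\rho)<\infty$ is well defined, and $n[1-D]^{n-1}D_{\rho_{p}}$ vanishes for $t>T(\rho)$. Given a compact $K\subset\mathbb{R}^{m}$, put $T_{K}:=\sup_{\rho\in K}T(\rho)<\infty$; on the compact set $[0,T_{K}]\times K$ the continuous function $D_{\rho_{p}}$ is bounded by some $C_{K}$, while $0\le[1-D]^{n-1}\le1$, so $\bigl|n[1-D]^{n-1}D_{\rho_{p}}\bigr|\le n\,C_{K}\,\mathbf{1}_{[0,T_{K}]}(t)$ for every $\rho\in K$. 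This majorant is integrable on $[0,\infty)$ and independent of $\rho$, so differentiation under the integral sign yields $G_{\rho_{p}}=-\int_{0}^{\infty}n[1-D]^{n-1}D_{\rho_{p}}\,dt$; the same majorant, with joint continuity of the integrand in $(t,\rho)$, gives by dominated convergence that $G_{\rho_{p}}$ is continuous on $K$, hence on $\mathbb{R}^{m}$ as $K$ was arbitrary.

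I expect the crux to be isolating the uniform cutoff $T(\rho)$ and checking it is locally bounded — this is exactly what makes the $G$–integrand effectively compactly supported in $t$. Once that is secured, the rest is routine use of the dominated convergence theorem on compact sets, resting on the geometric facts that the facets $\partial^{i}\Delta$ are compact and that $h$, $h_{\rho_{p}}$ are bounded and continuous on $[0,\infty)\times\mathbb{R}^{m}\times\partial\Delta$ by the analysis of Section \ref{sec:Notation-and-Properties}.
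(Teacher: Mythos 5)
Your proposal is correct and follows essentially the same route as the paper: for $D_{\rho_{p}}$ it differentiates under the facet integrals using the continuity of $h$ and $h_{\rho_{p}}$ on $[0,\infty)\times\mathbb{R}^{m}\times\partial\Delta$ (Section \ref{sub:Summarizeh}) together with dominated convergence, and for $G_{\rho_{p}}$ it uses Lemma \ref{lem:functionb}(4) to bound the support of $1-D(\cdot,\rho)$ uniformly for $\rho$ in a ball, exactly as in the paper's proof. Your extra observation that $D_{\rho_{p}}$ itself vanishes for $t>T(\rho)$ is a harmless refinement of the paper's support argument.
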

\begin{proof}
Referring to Section \ref{sub:Summarizeh}, the function $h^{m}$
and its partial derivative $mh^{m-1}h_{\rho_{p}}$ are continuous
on $[0,\infty)\times\mathbb{R}^{m}\times\partial\Delta$. Thus, by
Lebesgue's dominated convergence theorem, we may differentiate under
the integral sign and obtain 
\[
\frac{\partial}{\partial\rho_{p}}\int_{\partial^{i}\Delta}h^{m}dx_{(i)}=\int_{\partial^{i}\Delta}mh^{m-1}h_{\rho_{p}}dx_{(i)}.
\]
We note that the dominated convergence theorem also implies the right-hand
side is continuous on $[0,\infty)\times\mathbb{R}^{m}$.

The function $D$ is continuous on $[0,\infty)\times\mathbb{R}^{m}$
(Lemma \ref{lem:D}) and the function $D_{\rho_{p}}$ is continuous
on $[0,\infty)\times\mathbb{R}^{m}$ (first part of this lemma). Thus,
the function $\frac{\partial}{\partial\rho_{p}}[1-D]^{n}$ exists,
is given by $n[1-D]^{n-1}D_{\rho_{p}}$, and is continuous on $[0,\infty)\times\mathbb{R}^{m}$.
Next we let $\tilde{\rho}\in\mathbb{R}^{m}$, choose $r>0$, and consider
the support of the family of functions $t\mapsto1-D(t,\rho)$ with
$\rho\in B_{r}(\tilde{\rho})$. For a fixed $\rho\in B_{r}(\tilde{\rho})$,
the support of $t\mapsto1-D(t,\rho)$ is contained in the interval
$[0,\max_{\lambda\in\Delta}b(\lambda,\rho)]$. We recalling from Lemma
\ref{lem:functionb} that $\max_{\lambda\in\Delta}b(\lambda,\rho)=\max_{i\in\{0,1,\dots,m\}}\{\log(1+\left|e^{\rho}\right|)-\rho_{i}\}$.
For $i=0,1,\dotsc,m$, there exists $M_{i}>0$, depending on $\tilde{\rho}$
and $r$, such that $\log(1+\left|e^{\rho}\right|)-\rho_{i}<M_{i}$
for all $\rho\in B_{r}(\tilde{\rho})$. Consequently, there exists
an $M>0$, depending on $\tilde{\rho}$ and $r$, such that $\mathrm{supp}\bigl(1-D(\cdot,\rho)\bigr)\subset[0,M]$
for all $\rho\in B_{r}(\tilde{\rho})$. By Lebesgue\textquoteright{}s
dominated convergence theorem, we may differentiate under the integral
sign and obtain 
\begin{alignat*}{1}
G_{\rho_{p}}=\frac{\partial}{\partial\rho_{p}}\int_{0}^{\infty}[1-D]^{n}dt & =-\int_{0}^{\infty}n[1-D]^{n-1}D_{\rho_{p}}dt.
\end{alignat*}
The theorem also implies the function is continuous on $\mathbb{R}^{m}$.
\end{proof}

\subsection{The second partial derivative of $G$\label{sub:SecondPartialG}}

In Lemma \ref{lem:Drhop} we found that 
\[
D_{\rho_{p}}=\sum_{i=0}^{m}\mu_{i}\cdot\int_{\partial^{i}\Delta}mh^{m-1}h_{\rho_{p}}dx_{(i)}+\mu_{i,p}\cdot\int_{\partial^{i}\Delta}h^{m}dx_{(i)}
\]
is continuous on $[0,\infty)\times\mathbb{R}^{m}$. Then let $S=\sum_{i=0}^{m}\mu_{i}\cdot\int_{\partial^{i}\Delta}mh^{m-1}h_{\rho_{p}}dx_{(i)}$
and $\tilde{S}=\mu_{i,p}\cdot\int_{\partial^{i}\Delta}h^{m}dx_{(i)}$.
Thus, we have
\[
G_{\rho_{p}}=-\int_{0}^{\infty}n[1-D]^{n-1}Sdt-\int_{0}^{\infty}n[1-D]^{n-1}\tilde{S}dt.
\]
We first proceed with the second summand $-\int_{0}^{\infty}n[1-D]^{n-1}\tilde{S}dt$,
because it is easier to deal with, and then look at the first summand
$-\int_{0}^{\infty}n[1-D]^{n-1}Sdt$

\subsubsection{The summand $-\int_{0}^{\infty}n[1-D]^{n-1}\tilde{S}dt$ is continuously
differentiable}

The factor $\tilde{S}$ is continuous and has continuous derivatives
in $\rho_{p}$ over $[0,\infty)\times\mathbb{R}^{m}$ for the same
reason that $D$ has continuous derivatives in $\rho_{p}$ over $[0,\infty)\times\mathbb{R}^{m}$
(Lemma \ref{lem:D}, Lemma \ref{lem:Drhop}). Then the integrand of
the second summand $n[1-D]^{n-1}\tilde{S}$ and its derivative in
$\rho_{q}$, $n(n-1)[1-D]^{n-2}D_{\rho_{q}}\tilde{S}+n[1-D]^{n-1}\tilde{S}_{\rho_{q}}$,
are continuous over $[0,\infty)\times\mathbb{R}^{m}$. Imitating the
proof of Lemma \ref{lem:Drhop}, the first partial derivative of the
second summand with respect to $\rho_{q}$ exists, is given by
\[
\frac{\partial}{\partial\rho_{q}}\int_{0}^{\infty}n[1-D]^{n-1}\tilde{S}dt=\int_{0}^{\infty}\frac{\partial}{\partial\rho_{q}}\left[n[1-D]^{n-1}\tilde{S}\right]dt,
\]
and is continuous over $\mathbb{R}^{m}$.

\subsubsection{The summand $-\int_{0}^{\infty}n[1-D]^{n-1}Sdt$ is continuously
differentiable}

Now we are left to see that $\int_{0}^{\infty}n[1-D]^{n-1}Sdt$ has
a continuous partial derivative. We first write
\begin{alignat}{1}
\int_{0}^{\infty}n[1-D]^{n-1}Sdt & =\int_{0}^{\infty}n[1-D]^{n-1}\left[\sum_{i=0}^{m}\mu_{i}\cdot\int_{\partial^{i}\Delta}mh^{m-1}h_{\rho_{p}}dx_{(i)}\right]dt\label{eq:firstSUMMANDrewrite}\\
 & =nm\sum_{i=0}^{m}\mu_{i}\cdot\int_{0}^{\infty}\left[\int_{\partial^{i}\Delta}\left[1-D\right]^{n-1}h^{m-1}h_{\rho_{p}}dx_{(i)}\right]dt\nonumber \\
 & =nm\sum_{i=0}^{m}\mu_{i}\cdot T_{i},\nonumber 
\end{alignat}
 where 
\begin{equation}
T_{i}(\rho)\doteq\int_{0}^{\infty}\left[\int_{\partial^{i}\Delta}[1-D(t,\rho)]^{n-1}h^{m-1}(t,\rho,x)h_{\rho_{p}}(t,\rho,x)dx_{(i)}\right]dt.\label{eq:T_i}
\end{equation}
Applying Fubini's theorem and making the change of variables $t(s)=B(s,\rho,x)$,
we obtain
\begin{alignat*}{1}
T_{i} & =\int_{\partial^{i}\Delta}\left[\int_{0}^{b(x,\rho)}[1-D]^{n-1}h^{m-1}h_{\rho_{p}}dt\right]dx_{(i)}\\
 & =\int_{\partial^{i}\Delta}\left[\int_{0}^{1}[1-D(B,\rho)]^{n-1}s^{m-1}h_{\rho_{p}}(B,\rho,x)B_{s}ds\right]dx_{(i)}.
\end{alignat*}
We note that the change of variables is motivated by our desire to
replace a moving domain of integration with a fixed domain of integration
\cite{F}. Finally, since $h_{\rho_{p}}(B,\rho,x)B_{s}=-B_{\rho_{p}}$
(\ref{eq:hrhop}), we have
\begin{equation}
T_{i}(\rho)=\int_{\partial^{i}\Delta}\left[\int_{0}^{1}s^{m-1}[1-D(B,\rho)]^{n-1}(-B_{\rho_{p}})ds\right]dx_{(i)}.\label{eq:integralinstep4}
\end{equation}

Section (\ref{sub:SummarizeB}) and Lemma \ref{lem:D} imply the integrand
$s^{m-1}[1-D(B,\rho)]^{n-1}(-B_{\rho_{p}})$ is a continuous function
on $[0,1]\times\mathbb{R}^{n}\times\partial\Delta$ . It's partial
derivative in $\rho_{q}$ is
\begin{multline}
s^{m-1}(n-1)[1-D(B,\rho)]^{n-2}[D_{t}(B,\rho)B_{\rho_{q}}+D_{\rho_{q}}(B,\rho)]B_{\rho_{p}}-s^{m-1}[1-D(B,\rho)]^{n-1}B_{\rho_{p}\rho_{q}}.\label{eq:integrand}
\end{multline}

Except for $(s,\rho,x)\mapsto D_{t}(B(s,\rho,x),\rho)$, all the functions
appearing in (\ref{eq:integrand}) have been shown to be continuous
on $[0,1]\times\mathbb{R}^{m}\times\partial\Delta$. More specifically,
the continuity of $D$ is given by Lemma \ref{lem:D}, the continuity
of $D_{\rho_{q}}$ is given by Lemma \ref{lem:Drhop}, and the continuity
of $B$, $B_{\rho_{p}}$, and $B_{\rho_{p}\rho_{k}}$ are stated in
Section \ref{sub:SummarizeB}. Thus, in order to show \ref{eq:integrand}
is continuous on $[0,1]\times\mathbb{R}^{m}\times\partial\Delta$,
we would like to show $D_{t}$ is continuous on $[0,\infty)\times\mathbb{R}^{m}$.
However, we only have the following lemma: 
\begin{lem}
\label{lem:Dt}For $m>1$, the partial derivative $D_{t}$ exists
and is continuous on $[0,\infty)\times\mathbb{R}^{m}$. It is given
by
\[
D_{t}=\sum_{i=0}^{m}\mu_{i}\cdot\int_{\partial^{i}\Delta}mh^{m-1}h_{t}\, dx_{(i)}\geq0.
\]

For $m=1$, the partial derivative $D_{t}$ exists and is continuous
on $(0,\infty)\times\mathbb{R}$. It is given by 
\begin{alignat}{1}
D_{t}(t,\rho) & =\sum_{i=0}^{1}\mu_{i}\cdot h_{t}(t,\rho,\partial^{i}\Delta).\label{eq:Dt}
\end{alignat}
\end{lem}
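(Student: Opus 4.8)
The plan is to differentiate the integral representation $D(t,\rho)=\sum_{i=0}^{m}\mu_i(\rho)\int_{\partial^i\Delta}h^m(t,\rho,x)\,dx_{(i)}$ from Lemma \ref{lem:D} term by term in $t$, moving $\partial/\partial t$ inside each integral over the compact facet $\partial^i\Delta$. Since each coefficient $\mu_i$ depends only on $\rho$, everything reduces to justifying the identity $\frac{\partial}{\partial t}\int_{\partial^i\Delta}h^m\,dx_{(i)}=\int_{\partial^i\Delta}mh^{m-1}h_t\,dx_{(i)}$ together with the continuity of the right-hand side in $(t,\rho)$.

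The crucial point is that, although $h_t$ itself blows up as $t\to 0$ (Section \ref{sub:Summarizeh}), the integrand factors as $mh^{m-1}h_t=mh^{m-2}\cdot(h\,h_t)$. For $m>1$ the factor $h^{m-2}$ is continuous on $[0,\infty)\times\mathbb{R}^m\times\partial\Delta$ --- it is the constant $1$ when $m=2$, and a positive integer power of the continuous function $h$ when $m>2$ --- while $h\,h_t$ was shown in Section \ref{sub:Summarizeh} to extend continuously to $[0,\infty)\times\mathbb{R}^m\times\partial\Delta$. Hence $mh^{m-1}h_t$ is continuous on all of $[0,\infty)\times\mathbb{R}^m\times\partial\Delta$. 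Exactly as in the proof of Lemma \ref{lem:Drhop}, on any compact set of parameters the integrand is bounded uniformly over the compact facet $\partial^i\Delta$, so Lebesgue's dominated convergence theorem lets us differentiate under the integral sign (and, via the fundamental theorem of calculus, shows $h^m(\cdot,\rho,x)$ is $C^1$ in $t$ up to $t=0$), and it makes $\int_{\partial^i\Delta}mh^{m-1}h_t\,dx_{(i)}$ continuous on $[0,\infty)\times\mathbb{R}^m$. Summing against the $\mu_i$'s yields the stated formula for $D_t$ and its continuity on $[0,\infty)\times\mathbb{R}^m$. Nonnegativity is immediate, since $\mu_i>0$, $h^{m-1}\geq 0$, $h_t>0$ by (\ref{eq:ht}), and $dx_{(i)}$ is the positively oriented measure on $\partial^i\Delta$ furnished by Lemma \ref{lem:changeOfVariables}; equivalently $D(\cdot,\rho)$ is a distribution function and hence nondecreasing.

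For $m=1$ the integral over $\partial^i\Delta$ degenerates to evaluation at a point (compare the Remark following Lemma \ref{lem:D}), so $D(t,\rho)=\mu_0\,h(t,\rho,1)+\mu_1\,h(t,\rho,0)$ is a finite sum; differentiating directly and invoking the continuity of $h_t$ on $(0,\infty)\times\mathbb{R}\times\partial\Delta$ from Section \ref{sub:Summarizeh} gives the formula and continuity on $(0,\infty)\times\mathbb{R}$. The reason the $m>1$ argument does not extend is precisely that $mh^{m-1}h_t=h_t$ then carries no spare power of $h$ to absorb the singularity of $h_t$ at $t=0$, so $D_t$ genuinely blows up at the left endpoint; this is the one real obstruction and is what forces the smaller domain of continuity when $m=1$. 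So the only subtlety in the whole argument is the factorization $mh^{m-1}h_t=mh^{m-2}(h\,h_t)$ and the observation that it tames the singularity exactly when $m\geq 2$.
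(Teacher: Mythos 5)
Your proposal is correct and follows essentially the same route as the paper: the paper also uses the continuity of $h$ and of the product $h\cdot h_{t}$ from Section \ref{sub:Summarizeh} to conclude that $mh^{m-1}h_{t}$ is continuous for $m\geq2$, then differentiates under the integral sign via dominated convergence, and for $m=1$ differentiates the pointwise expression (\ref{eq:Dwrth}) directly, using the continuity of $h_{t}$ only on $(0,\infty)\times\mathbb{R}\times\partial\Delta$. Your explicit factorization $mh^{m-1}h_{t}=mh^{m-2}(h\,h_{t})$ and your remarks on nonnegativity and on why the argument breaks at $m=1$ are just welcome elaborations of the same idea.
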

\begin{proof}
As stated in Section \ref{sub:Summarizeh}, the functions $h$ and
$h\cdot h_{t}$ are continuous on $[0,\infty)\times\mathbb{R}^{m}\times\partial\Delta$.
Thus the function $h^{m}$ and its partial derivative $mh^{m-1}h_{t}$
(for $m\geq2$) are continuous on $[0,\infty)\times\mathbb{R}^{m}\times\partial\Delta$.
By Lebesgue's dominated convergence theorem, we differentiate under
the integral sign and obtain 
\[
\frac{\partial}{\partial t}\int_{\partial^{i}\Delta}h^{m}dx_{(i)}=\int_{\partial^{i}\Delta}mh^{m-1}h_{t}dx_{(i)}.
\]
The theorem also gives the continuity of the function on $[0,\infty)\times\mathbb{R}^{m}$.

While we can take $m=1$ as a special case of $m>1$ except on $(0,\infty)\times\mathbb{R}^{m}\times\partial\Delta$,
we instead choose to differentiate the expression for $D$ given by
(\ref{eq:Dwrth}) with respect to $t$ and refer to the continuity
of $h_{t}$ as stated in Section \ref{sub:Summarizeh}.
\end{proof}
Thus, the function $D_{t}$ is only continuous on $[0,\infty)\times\mathbb{R}^{m}$
for $m>1$. While the function $D_{t}(B(s,\rho,x),\rho)$ tends to
$\infty$ as $(s,\rho,x)$ tends to $(0,\tilde{\rho},\tilde{x})$
for any $(\tilde{\rho},\tilde{x})\in\mathbb{R}^{m}\times\partial\Delta$,
the function $B_{\rho}(s,\rho,x)$ tends to $0$ as $(s,\rho,x)$
tends to $(0,\tilde{\rho},\tilde{x})$. It would be convenient then,
for $m=1$, if the map $(s,\rho,x)\mapsto D_{t}\bigl(B(s,\rho,x),\rho\bigr)B_{\rho}(s,\rho,x)$
could be extended continuously to $[0,1]\times\mathbb{R}\times\partial\Delta$.
Since Section (\ref{sub:SummarizeB}) and Lemma \ref{lem:Dt} imply
it is continuous on $(0,1]\times\mathbb{R}\times\partial\Delta$,
it suffices to show the limit as $(s,\rho,x)$ tends to $(0,\tilde{\rho},\tilde{x})$
for any $(\tilde{\rho},\tilde{x})\in\mathbb{R}\times\partial\Delta$
exists.

However, before setting out to do this, we shall prove a technical
lemma concerning the function $B$. The first part of the lemma essentially
says that since $s\mapsto B(s,\rho,0)$ and $s\mapsto B(s,\rho,1)$
are strictly increasing functions starting at $0$ for $s=0$, for
sufficiently small $s$, we can find $\tilde{s}$ such that $B(s,\rho,0)=B(\tilde{s},\rho,1)$
and vice versa. The second part of the lemma essentially says that
since $B(\cdot,\rho,0)$ and $B(\cdot,\rho,1)$ are shaped similarly,
the rate at which $\tilde{s}$ goes to $0$ is almost proportional
to the rate at which $s$ goes to $0$. Rigorously, we have the following:
\begin{lem}
\label{lem:stilde}There exists $\delta>0$ such that for any $s\in[0,\delta]$,
there exists $\tilde{s}=\tilde{s}(s,\rho)$ such that 
\begin{equation}
B(s,\rho,x)=B(\tilde{s},\rho,1-x)\label{eq:equality}
\end{equation}

Furthermore, we have 
\[
\frac{d\tilde{s}}{ds}(0,\rho)=\lim_{s\to0}\frac{\tilde{s}}{s}=\exp[(-1)^{x}\rho].
\]
\end{lem}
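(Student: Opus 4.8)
The plan is to work entirely with $m=1$, so that $\Delta=[0,1]$, $\partial\Delta=\{0,1\}$, $x\in\{0,1\}$, and $1-x$ denotes the other endpoint, and to reduce both assertions to the two smooth one-variable profiles $s\mapsto B(s,\rho,0)$ and $s\mapsto B(s,\rho,1)$. By Section~\ref{sub:SummarizeB} each of these is strictly increasing on $[0,1)$ and vanishes at $s=0$ together with its first $s$-derivative, with $B_{ss}(0,\rho,x)=\sum_{i=0}^{1}(x_i-\mu_i)^2/\mu_i$. First I would record the elementary simplification of this sum: since $x_0+x_1=1$ and $\mu_0+\mu_1=1$, it collapses to $B_{ss}(0,\rho,x)=\mu_{1-x}/\mu_x=\exp[(-1)^{x}\rho]>0$ (using $\mu_i=e^{\rho_i}/(1+e^{\rho})$ with $\rho_0=0$, $\rho_1=\rho$); in particular $B_{ss}(0,\rho,1-x)=\exp[-(-1)^{x}\rho]$.

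For the existence of $\tilde s$, note that $s\mapsto B(s,\rho,1-x)$ is a continuous strictly increasing map of $[0,1]$ with $B(0,\rho,1-x)=0$ and $B(1,\rho,1-x)=b(1-x,\rho)>0$, the last inequality because $1-x\in\partial\Delta$ is distinct from the interior point $\mu(\rho)$ (Lemma~\ref{lem:functionb}); hence it is a homeomorphism of $[0,1]$ onto $[0,b(1-x,\rho)]$. Choosing $\delta\in(0,1)$ small enough that $B(\delta,\rho,x)\le b(1-x,\rho)$ --- possible since $B(0,\rho,x)=0$ and $B(\cdot,\rho,x)$ is continuous --- each $s\in[0,\delta]$ has $B(s,\rho,x)$ in that range, and inverting $B(\cdot,\rho,1-x)$ yields the unique $\tilde s=\tilde s(s,\rho)\in[0,1]$ satisfying (\ref{eq:equality}).

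For the limit and the derivative at $0$, the key device is the ``square-root'' substitution that \emph{unfolds} the double zero of $B$ at $s=0$. Taylor's formula with integral remainder applied to the smooth function $s\mapsto B(s,\rho,x)$ yields $B(s,\rho,x)=s^{2}\psi_x(s)$ with $\psi_x(s)=\int_0^1(1-v)\,B_{ss}(sv,\rho,x)\,dv$ smooth and $\psi_x(0)=\tfrac12 B_{ss}(0,\rho,x)>0$; since $B>0$ for $s>0$, in fact $\psi_x>0$ on all of $[0,\delta]$, so $\phi_x(s):=\sqrt{2B(s,\rho,x)}=s\sqrt{2\psi_x(s)}$ is smooth on $[0,\delta]$ with $\phi_x(0)=0$ and $\phi_x'(0)=\sqrt{B_{ss}(0,\rho,x)}$, and similarly for $\phi_{1-x}$. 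Because $\phi_{1-x}^{2}=2B(\cdot,\rho,1-x)$ is strictly increasing and $\phi_{1-x}'(0)\ne0$, $\phi_{1-x}$ admits a smooth local inverse at $0$; and since (\ref{eq:equality}) is equivalent to $\phi_x(s)=\phi_{1-x}(\tilde s)$, we obtain $\tilde s=\phi_{1-x}^{-1}\!\bigl(\phi_x(s)\bigr)$, which is smooth in $s$ near $0$ with $\tilde s(0)=0$. Differentiating at $0$,
\[
\frac{d\tilde s}{ds}(0,\rho)=\lim_{s\to0}\frac{\tilde s}{s}=\frac{\phi_x'(0)}{\phi_{1-x}'(0)}=\sqrt{\frac{B_{ss}(0,\rho,x)}{B_{ss}(0,\rho,1-x)}}=\sqrt{\exp[2(-1)^{x}\rho]}=\exp[(-1)^{x}\rho].
\]

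The genuinely delicate point --- and the one I expect to be the main obstacle --- is exactly this behavior at $s=0$: the implicit function theorem does not apply directly to (\ref{eq:equality}) there, since $B_s(0,\rho,1-x)=0$, so one must pass through $\phi=\sqrt{2B}$ (or an equivalent blow-up) to restore a nonvanishing derivative. The only other care needed is verifying $\psi_x>0$ on the whole interval $[0,\delta]$, not merely near $0$, so that $\phi_x$ is smooth throughout; everything else is routine monotonicity bookkeeping together with the Taylor expansion above.
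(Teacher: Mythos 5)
Your proposal is correct. The existence part is essentially the paper's argument in different clothing: the paper inverts $b(\cdot,\rho)$ through its two branches $g_{\pm1}$ on $[0,\mu(\rho)]$ and $[\mu(\rho),1]$ and chooses $\delta$ so that $B(s,\rho,x)$ stays below $\min\{b(0,\rho),b(1,\rho)\}$, while you invert $B(\cdot,\rho,1-x)$ directly as a strictly increasing homeomorphism of $[0,1]$ onto $[0,b(1-x,\rho)]$; these are the same monotonicity facts, so no real difference there. For the asymptotics you do take a genuinely different route. The paper expands both sides of (\ref{eq:equality}) to third order with Lagrange remainders, as in (\ref{eq:taylor1})--(\ref{eq:taylor2}), solves for $\tilde{s}/s$ as the square root of a quotient, and passes to the limit (implicitly using $\tilde{s}\to0$ and boundedness of $B_{sss}$ near $s=0$). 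You instead unfold the double zero at $s=0$ by writing $B(s,\rho,x)=s^{2}\psi_x(s)$ with the integral-form remainder and setting $\phi_x=\sqrt{2B(\cdot,\rho,x)}$, so that $\tilde{s}=\phi_{1-x}^{-1}\circ\phi_x$ becomes a composition of $\mathcal{C}^1$ functions with nonvanishing derivative of $\phi_{1-x}$ at $0$, and the chain rule gives $\tilde{s}'(0)=\sqrt{B_{ss}(0,\rho,x)/B_{ss}(0,\rho,1-x)}=\exp[(-1)^{x}\rho]$, matching the value computed from (\ref{eq:lim0Bss}). Your version needs only $\mathcal{C}^2$ regularity of $B$ in $s$ (versus third derivatives in the paper), it delivers genuine one-sided differentiability of $s\mapsto\tilde{s}$ at $0$ rather than only the limit of the difference quotient, and your observation $\psi_x>0$ on all of $[0,\delta]$ (from strict monotonicity of $B$) correctly handles the one point where the implicit function theorem fails at $s=0$; the paper's route, by contrast, is more computational but avoids introducing the auxiliary square-root functions.
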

\begin{proof}
For the first part, we recall from Section \ref{sub:Motivation} that
the function $b(\cdot,\rho)$ is convex with a minimum at $\mu(\rho)$
so its inverse with two branches, $g_{-1}$ and $g_{1}$, given by
\begin{alignat*}{1}
g_{-1}(t,\rho) & =\lambda\quad\text{if }t\in[0,b(0,\rho)],\, b(\lambda,\rho)=t,\text{ and }\lambda\in[0,\mu(\rho)]
\end{alignat*}
and
\begin{alignat*}{1}
g_{1}(t,\rho) & =\lambda\quad\text{if }t\in[0,b(1,\rho)],\, b(\lambda,\rho)=t,\text{ and }\lambda\in[\mu(\rho),1].
\end{alignat*}
Thus, for $t\in[0,\min\{b(0,\rho),b(1,\rho)\}]$, we have $g_{-1}(t,\rho)\in[0,\mu(\rho)]$
and $g_{1}(t,\rho)\in[\mu(\rho),1]$ with $b(g_{-1},\rho)=t=b(g_{1},\rho)$.
Separately, given $\lambda_{-1}\in[0,\mu(\rho)]$ and $\lambda_{1}\in[\mu(\rho),1]$,
there exists $s_{-1}\geq0$ and $s_{1}\geq0$ such that $\lambda_{-1}=(1-s_{-1})\mu+s_{-1}\cdot0$
and $\lambda_{1}=(1-s_{1})\mu+s_{1}\cdot1$. Letting $\varepsilon=\min\{b(0,\rho),b(1,\rho)\}$,
there is a $\delta>0$ such that $0\leq B(s,\rho,x)<\varepsilon$
for all $s\in[0,\delta]$. Then for any $s\in[0,\delta]$, we have
$B(s,\rho,0)\in[0,\varepsilon]$, so $g_{1}(B(s,\rho,0),\rho)$ lies
in the interval $[\mu(\rho),1]$, and we find $\tilde{s}\geq0$ such
that 
\[
g_{1}(B(s,\rho,0),\rho)=(1-\tilde{s})\mu+\tilde{s}\cdot1.
\]
Taking $b(\cdot,\rho)$ of both sides, we obtain 
\begin{alignat*}{1}
B(s,\rho,0) & =b((1-\tilde{s})\mu+\tilde{s}\cdot1,\rho)\\
 & =B(\tilde{s},\rho,1).
\end{alignat*}
Likewise, for any $r\in[0,\delta]$, we have $B(r,\rho,1)\in[0,\varepsilon\}]$
and so $g_{-1}(B(r,\rho,1),\rho)$ lies in the interval $[0,\mu(\rho)]$
and we find $\tilde{r}\geq0$ such that 
\[
g_{-1}(B(r,\rho,1),\rho)=(1-\tilde{r})\mu+\tilde{r}\cdot0.
\]
Taking $b(\cdot,\rho)$ of both sides, we obtain
\begin{alignat*}{1}
B(r,\rho,1) & =b((1-\tilde{r})\mu+\tilde{r}\cdot0,\rho)\\
 & =B(\tilde{r},\rho,0).
\end{alignat*}

For the second part, we have $B(0,\rho,x)=B_{s}(0,\rho,x)=0$ and
\begin{alignat*}{1}
B_{ss}(0,\rho,x) & =\frac{(x_{0}-\mu_{0})^{2}}{\mu_{0}}+\frac{(x_{1}-\mu_{1})^{2}}{\mu_{1}}\\
 & =\frac{((1-x)-(1-\mu)^{2}}{1-\mu}+\frac{(x-\mu)^{2}}{\mu}\\
 & =(x-\mu)^{2}\frac{1}{\mu(1-\mu)}\\
 & =\begin{cases}
e^{\rho} & \text{if }x=0\\
e^{-\rho} & \text{if }x=1
\end{cases}
\end{alignat*}
Then by Taylor's theorem centered around $0$, we have
\begin{alignat}{1}
B(s,\rho,0) & =\frac{e^{\rho}}{2}s^{2}+\frac{1}{6}B_{sss}(\xi,\rho,0)s^{3}\label{eq:taylor1}
\end{alignat}
for some $\xi\in(0,s)$ and 
\begin{alignat}{1}
B(\tilde{s},\rho,1) & =\frac{e^{-\rho}}{2}\tilde{s}^{2}+\frac{1}{6}B_{sss}(\tilde{\xi},\rho,1)\tilde{s}^{3}\label{eq:taylor2}
\end{alignat}
for some $\tilde{\xi}\in(0,\tilde{s})$. Combining (\ref{eq:equality}),
(\ref{eq:taylor1}), and (\ref{eq:taylor2}), we have 
\[
\left[\frac{e^{\rho}}{2}+\frac{1}{6}B_{sss}(\xi,\rho,0)s\right]s^{2}=\left[\frac{e^{-\rho}}{2}+\frac{1}{6}B_{sss}(\tilde{\xi},\rho,1)\tilde{s}\right]\tilde{s}^{2}
\]
and so
\[
\frac{\tilde{s}}{s}=\sqrt{\frac{\dfrac{e^{\rho}}{2}+\dfrac{1}{6}B_{sss}(\xi,\rho,0)s}{\dfrac{e^{-\rho}}{2}+\dfrac{1}{6}B_{sss}(\tilde{\xi},\rho,1)\tilde{s}}}.
\]
Taking the limit of both sides as $s\to0$, we obtain
\[
\frac{d\tilde{s}}{ds}(0,\rho)=\lim_{s\to0}\frac{\tilde{s}-0}{s-0}=e^{\rho}.
\]

Finally, either by repeating the above argument or observing $\tilde{r}(\cdot,\rho)$
and $\tilde{s}(\cdot,\rho)$ are inverse to each other, we have
\[
\frac{d\tilde{r}}{dr}(0,\rho)=e^{-\rho}
\]
and the second part of the lemma follows.
\end{proof}
Having proved the technical lemma above, we are ready to show the
limit of $D_{t}(B,\rho)B_{\rho}$ exists.
\begin{lem}
The function $(s,\rho,x)\mapsto D_{t}\bigl(B(s,\rho,x),\rho\bigr)B_{\rho}(s,\rho,x)$
can be extended continuously to $[0,1]\times\mathbb{R}\times\partial\Delta$.
In particular,
\[
\lim_{(s,\rho)\to(0,\tilde{\rho})}D_{t}\left(B(s,\rho,x),\rho\right)B_{\rho}(s,\rho,x)=0
\]
for all $\tilde{\rho}\in\mathbb{R}$ and for $x=0$ or $x=1$.\end{lem}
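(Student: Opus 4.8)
The plan is to reduce to $m=1$ and then combine Lemma~\ref{lem:stilde} with the limit \eqref{eq:lim0hrhop}. First I would observe that $\partial\Delta=\{0,1\}$ is a pair of isolated points and that $b(x,\rho)>0$ for $x\in\{0,1\}$, since by Lemma~\ref{lem:functionb} the function $b(\cdot,\rho)$ vanishes only at $\mu(\rho)\in(0,1)$. Hence for $s\in(0,1]$ the number $B(s,\rho,x)$ lies in $(0,b(x,\rho)]$, where $D_t$ is continuous by Lemma~\ref{lem:Dt}; together with the continuity of $B_\rho$ on $[0,1]\times\mathbb{R}\times\partial\Delta$ recorded in Section~\ref{sub:SummarizeB}, the map $(s,\rho,x)\mapsto D_t(B(s,\rho,x),\rho)B_\rho(s,\rho,x)$ is already continuous on $(0,1]\times\mathbb{R}\times\partial\Delta$. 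So everything comes down to the stated limit as $s\to 0$; since $\partial\Delta$ is discrete it suffices to treat $x=0$ and $x=1$ separately, and the extension is then defined to equal $0$ along $s=0$.

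Fix $x=0$ (the case $x=1$ is entirely parallel, using the alternative $B(s,\rho,1)=B(\tilde s,\rho,0)$ of Lemma~\ref{lem:stilde}). By the $m=1$ formula \eqref{eq:Dt},
\[
D_t(B(s,\rho,0),\rho)=\mu_0\,h_t\bigl(B(s,\rho,0),\rho,1\bigr)+\mu_1\,h_t\bigl(B(s,\rho,0),\rho,0\bigr).
\]
Since $h(\cdot,\rho,0)$ inverts $B(\cdot,\rho,0)$, equation \eqref{eq:ht} gives $h_t(B(s,\rho,0),\rho,0)=1/B_s(s,\rho,0)$; and for $s$ in the interval $[0,\delta]$ furnished by Lemma~\ref{lem:stilde}, the associated $\tilde s=\tilde s(s,\rho)$ satisfies $B(\tilde s,\rho,1)=B(s,\rho,0)$, whence $h_t(B(s,\rho,0),\rho,1)=1/B_s(\tilde s,\rho,1)$. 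Therefore, for small $s$,
\[
D_t(B(s,\rho,0),\rho)\,B_\rho(s,\rho,0)=\mu_1\,\frac{B_\rho(s,\rho,0)}{B_s(s,\rho,0)}+\mu_0\,\frac{B_\rho(s,\rho,0)}{B_s(s,\rho,0)}\cdot\frac{B_s(s,\rho,0)}{B_s(\tilde s,\rho,1)}.
\]

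By \eqref{eq:hrhop} the factor $B_\rho(s,\rho,0)/B_s(s,\rho,0)$ equals $-h_\rho(B(s,\rho,0),\rho,0)$, which tends to $0$ as $(s,\rho)\to(0,\tilde\rho)$ by \eqref{eq:lim0hrhop} together with the continuity of $(s,\rho)\mapsto(B(s,\rho,0),\rho,0)$ (this map sends $(0,\tilde\rho)$ to $(0,\tilde\rho,0)$). It remains to see that $B_s(s,\rho,0)/B_s(\tilde s,\rho,1)$ stays bounded near $(0,\tilde\rho)$. Applying the mean value theorem to $s\mapsto B_s(s,\rho,0)$ and to $s\mapsto B_s(s,\rho,1)$, both of which vanish at $s=0$, this ratio equals $\dfrac{B_{ss}(\xi_1,\rho,0)}{B_{ss}(\xi_2,\rho,1)}\cdot\dfrac{s}{\tilde s}$ for some $\xi_1\in(0,s)$ and $\xi_2\in(0,\tilde s)$; as $s\to 0$ one has $\xi_1,\xi_2,\tilde s\to 0$ (the last because $\tilde s=h(B(s,\rho,0),\rho,1)$ and the extended $h$ of Section~\ref{sub:Summarizeh} is continuous with $h(0,\tilde\rho,1)=0$), so by the continuity of $B_{ss}$ the first factor tends to $B_{ss}(0,\rho,0)/B_{ss}(0,\rho,1)$, which is finite and positive by \eqref{eq:lim0Bss}, while $s/\tilde s\to e^{-\rho}$ by Lemma~\ref{lem:stilde}. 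Since $B_{ss}(0,\cdot,1)>0$ these convergences are locally uniform in $\rho$, so both summands tend to $0$ locally uniformly; this establishes the limit, and together with the continuity on $(0,1]\times\mathbb{R}\times\partial\Delta$ noted above, the continuous extension.

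The step I expect to need the most care is the middle one: recognizing that the ``cross term'' $h_t(B(s,\rho,0),\rho,1)$ is controlled by the auxiliary quantity $\tilde s$ of Lemma~\ref{lem:stilde} and not by $s$ itself --- so that the comparison $\tilde s\sim e^\rho s$ is genuinely needed --- and then arranging all the intermediate limits to be \emph{joint} (locally uniform in $\rho$), which is what continuity of the extension, rather than mere existence of the limit for each fixed $\tilde\rho$, actually requires. The positivity of $B_{ss}$ on $[0,1)\times\mathbb{R}\times\partial\Delta$, the continuity in $(s,\rho)$ of $\tilde s$ (inherited from the extended $h$), and the fact that the $\delta$ of Lemma~\ref{lem:stilde} can be chosen uniform over $\rho$ in a compact set (because $b(0,\cdot)$ and $b(1,\cdot)$ are continuous and positive) are the ingredients that make this uniformity work.
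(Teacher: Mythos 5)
Your proposal is correct and takes essentially the same route as the paper: both expand $D_{t}$ via the $m=1$ formula \eqref{eq:Dt}, kill the same-endpoint term through $h_{\rho_{p}}\to0$ (i.e.\ \eqref{eq:lim0hrhop}), and control the cross term by comparing $B_{s}(s,\rho,x)$ with $B_{s}(\tilde{s},\rho,1-x)$ using Lemma~\ref{lem:stilde} together with the mean value theorem applied to $B_{s}$. The only differences are cosmetic: you use the limit $\tilde{s}/s\to e^{\rho}$ directly instead of a third mean-value step, and you spell out the local uniformity in $\rho$ that the paper leaves implicit.
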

\begin{proof}
Recalling the definition of the facets $\partial^{i}\Delta$ and vertices
$v_{i}$ of the unit simplex $\Delta$ defined in Section (\ref{sec:Notation-and-Properties}),
we have $\partial^{0}\Delta=\{1\}$, $\partial^{1}\Delta=\{0\}$,
$v_{0}=0$, and $v_{1}=1$. Using (\ref{eq:Dt}), we have
\begin{alignat*}{1}
D_{t}\bigl(B(s,\rho,v_{k}),\rho\bigr)B_{\rho}(s,\rho,v_{k}) & =\sum_{i=0}^{1}\mu_{i}(\rho)h_{t}\bigl(B(s,\rho,v_{k}),\rho,\partial^{i}\Delta\bigr)B_{\rho}(s,\rho,v_{k}).\\
 & =\sum_{i=0}^{1}\mu_{i}(\rho)h_{t}\bigl(B(s,\rho,v_{k}),\rho,v_{1-i}\bigr)B_{\rho}(s,\rho,v_{k}).
\end{alignat*}
If $1-i=k$, then we have 
\begin{alignat*}{1}
h_{t}\bigl(B(s,\rho,v_{k}),\rho,v_{1-i}\bigr)B_{\rho}(s,\rho,v_{k}) & =h_{t}\bigl(B(s,\rho,v_{k}),\rho,v_{k}\bigr)B_{\rho}(s,\rho,v_{k})\\
 & =-h_{\rho}\bigl(B(s,\rho,v_{k}),\rho,v_{k}\bigr).
\end{alignat*}
If $1-i\neq k$, then $i=k$ and we have 
\begin{alignat*}{1}
h_{t}\bigl(B(s,\rho,v_{k}),\rho,v_{1-i}\bigr)B_{\rho}(s,\rho,v_{k}) & =\frac{1}{B_{s}\Bigl(h\bigl(B(s,\rho,v_{k}),\rho,v_{1-i}\bigr),\rho,v_{1-i}\Bigr)}B_{\rho}(s,\rho,v_{k})\\
 & =\frac{-B_{s}(s,\rho,v_{k})h_{\rho}\bigl(B(s,\rho,v_{k}),\rho,v_{k}\bigr)}{B_{s}\Bigl(h\bigl(B(\tilde{s},\rho,v_{1-k}),\rho,v_{1-i}\bigr),\rho,v_{1-i}\Bigr)}\\
 & =-\frac{B_{s}(s,\rho,v_{k})}{B_{s}(\tilde{s},\rho,v_{1-k})}h_{\rho}\bigl(B(s,\rho,v_{k}),\rho,v_{k}\bigr)\\
 & =-\frac{B_{s}(s,\rho,v_{k})}{B_{s}(\tilde{s},\rho,v_{1-k})}\frac{\frac{1}{s}}{\frac{1}{\tilde{s}}}\frac{1}{\frac{\tilde{s}}{s}}h_{\rho}\bigl(B(s,\rho,v_{k}),\rho,v_{k}\bigr)\\
 & =-\frac{B_{ss}(\xi,\rho,v_{k})}{B_{ss}(\eta,\rho,v_{1-k})}\frac{1}{\frac{d\tilde{s}}{ds}(\zeta,\rho)}h_{\rho}(B(s,\rho,v_{k}),\rho,v_{k}),
\end{alignat*}
where the second equality is given by Lemma \ref{lem:stilde}, and
in the last equality we have $\xi,\zeta\in(0,s)$ and $\eta\in(0,\tilde{s})$
by three separate applications of the mean value theorem.

We conclude that $h_{t}(B(s,\rho,v_{k}),\rho,v_{1-i})B_{\rho}(s,\rho,v_{k})$
goes to $0$ as $(s,\rho)$ goes to $(0,\tilde{\rho})$ for any $\tilde{\rho}\in\mathbb{R}$
regardless of whether $i=k$ or $1-i=k$ and hence $D_{t}(B,\rho)B_{\rho}$
goes to $0$ as $(s,\rho)$ goes to $(0,\tilde{\rho})$.
\end{proof}
Thus, we have shown that (\ref{eq:integrand}) is continuous on $[0,1]\times\mathbb{R}^{m}\times\partial\Delta$
for all $m\geq1$ and we may apply Lebesgue\textquoteright{}s dominated
convergence theorem to (\ref{eq:integralinstep4}). Hence, the function
(\ref{eq:T_i}) has a continuous partial derivative in $\rho_{q}$
for all $q$. Another application of Lebesgue's dominated convergence
theorem shows that the first summand (\ref{eq:firstSUMMANDrewrite})
has a continuous partial derivative in $\rho_{q}$ for all $q$ and
is given by
\[
\frac{\partial}{\partial\rho_{q}}\int_{0}^{\infty}n[1-D]^{n-1}Sdt=\int_{0}^{\infty}\frac{\partial}{\partial\rho_{q}}\left[n[1-D]^{n-1}S\right]dt.
\]

\subsubsection{Summary for $G$}

Since both the first and second summand of $G_{\rho_{p}}$ have continuous
partial derivatives in $\rho_{q}$ for all $q$, the function $G_{\rho_{p}}$
has a continuous partial derivative in $\rho_{q}$ for all $q$. Since
we could also differentiate under the integral sign of both summands,
the function $G$ has continuous second partial derivatives and 
\[
\frac{\partial}{\partial\rho_{q}}G_{\rho_{p}}=\int_{0}^{\infty}\frac{\partial}{\partial\rho_{q}}\left[n[1-D]^{n-1}D_{\rho_{p}}\right]dt,
\]
for all $p,q$.

\subsection{Conclusion of the proof}

The above concludes the proof of Theorem \ref{thm:maintheorem}. In
brief, we started with the function 
\[
F_{n}(\rho)=\int_{\Delta^{n}}\max_{j=1,\dots,n}\left[\bigl\langle\rho,\lambda^{j}\bigr\rangle-\bigl\langle\widehat{\lambda^{j}},\log\widehat{\lambda^{j}}\bigr\rangle\right]d\lambda^{1}\cdots d\lambda^{n}
\]
and rewrote it in Section \ref{sec:Additional-Preliminaries} as
\[
F_{n}(\rho)=\log(1+\left|e^{\rho}\right|)-\int_{0}^{\infty}[1-D(t,\rho)]^{n}dt.
\]
We then denoted the second term, without the minus sign, by $G_{n}$.
Working only with fixed $n$, we dropped the subscript $n$ and simply
wrote $G$. By Lemma \ref{lem:Drhop}, we concluded that both $D$
and $G$ are continuously differentiable in $\rho$. Furthermore,
we could bring the derivative under the integral sign so that
\[
\frac{\partial}{\partial\rho_{p}}G=\int_{0}^{\infty}n[1-D(t,\rho)]^{n-1}D_{\rho_{p}}(t,\rho)dt
\]
for any $p=1,\dotsc,m$. In Section \ref{sub:SecondPartialG}, we
separated the integral on the right-hand side into two terms and showed
that each term is continuously differentiable. Hence, we concluded
the function $G$ is twice differentiably continuous. Since $\log(1+\left|e^{\rho}\right|)$
is also $\mathcal{C}^{2}$, we conclude that $F_{n}$ is twice differentiably
continuous and the remaining statements of the theorem follow directly.

\bibliographystyle{abbrv}
\bibliography{ThesisBibliography}

\end{document}